\def\NAT@def@citea{\def\@citea{\NAT@separator}}
\theoremstyle{plain}
\newtheorem{theorem}{Theorem}[section]
\newtheorem{lemma}[theorem]{Lemma}
\newtheorem{proposition}[theorem]{Proposition}
\theoremstyle{definition}
\newtheorem{definition}[theorem]{Definition}
\newtheorem{example}[theorem]{Example}
\theoremstyle{remark}
\newtheorem{remark}{Remark}
\begin{document}

\articletype{Research Paper}

\title{Monotone operator methods for a class of nonlocal multi-phase variable exponent problems}

\author{\name{Mustafa Avci\thanks{CONTACT M.~Avci. Email:  mavci@athabascau.ca (primary) \& avcixmustafa@gmail.com}}
\affil{Faculty of Science and Technology, Applied Mathematics, Athabasca University, AB, Canada}}

\maketitle

\begin{abstract}
In this paper, we study a class of nonlocal multi-phase variable exponent problems within the framework of a newly introduced Musielak-Orlicz Sobolev space. We consider two problems, each distinguished by the type of nonlinearity it includes. To establish the existence of at least one nontrivial solution for each problem, we employ two different monotone operator methods.
\end{abstract}

\begin{keywords}
Multi-phase operator; Nonlocal problem;  Musielak-Orlicz Sobolev space; Variable exponents.
\end{keywords}

\begin{amscode}
35A01; 35A15; 35D30; 35J66; 35J75
\end{amscode}

\section{Introduction}

In this article, we study the following nonlocal multi-phase variable exponent problem

\begin{equation}\label{e1.1}
\begin{cases}
\begin{array}{rlll}
-\mathcal{M}(\varrho_{\mathcal{T}}(u))\mathrm{div}(|\nabla u|^{p(x)-2}\nabla u+\mu_1(x)|\nabla u|^{q(x)-2}\nabla u+\mu_2(x)|\nabla u|^{r(x)-2}\nabla u)&=f \text{ in }\Omega, \\
u&=0  \text{ on }\partial \Omega, \tag{$\mathcal{P}$}
\end{array}
\end{cases}
\end{equation}
with
\begin{align*}
\varrho_{\mathcal{T}}(u):=\int_\Omega\left(\frac{|\nabla u|^{p(x)}}{p(x)}+\mu_1(x)\frac{|\nabla u|^{q(x)}}{q(x)}+\mu_2(x)\frac{|\nabla u|^{r(x)}}{r(x)}\right)dx,
\end{align*}
where $\Omega$ is a bounded domain in $\mathbb{R}^N$ $(N\geq2)$ with Lipschitz boundary; $f \in W_0^{1,\mathcal{T}}(\Omega)^{*}$; $\mathcal{M}$ is a $C^1$-continuous nondecreasing function; $p,q,r \in C_+(\overline{\Omega })$ with $1<p(x)<q(x)<r(x)$; and $0\leq \mu_1(\cdot),\mu_2(\cdot)\in L^\infty(\Omega)$.\\

The operator
\begin{equation}\label{e1.2a}
\mathrm{div}(|\nabla u|^{p(x)-2}\nabla u+\mu_1(x)|\nabla u|^{q(x)-2}\nabla u+\mu_2(x)|\nabla u|^{r(x)-2}\nabla u)
\end{equation}
governs anisotropic and heterogeneous diffusion and is associated with the energy functional
\begin{equation}\label{e1.2b}
u\to \int_\Omega\left(\frac{|\nabla u|^{p(x)}}{p(x)}+\mu_1(x)\frac{|\nabla u|^{q(x)}}{q(x)}+\mu_2(x)\frac{|\nabla u|^{r(x)}}{r(x)}\right)dx,\,\ u \in W_0^{1,\mathcal{T}}(\Omega).
\end{equation}
This operator is referred to as a "multi-phase" operator because it encapsulates three distinct types of elliptic behavior within a unified framework. Such a structure allows the model to describe phenomena where materials or processes exhibit varying properties in different regions—for instance, materials that are harder in some areas and softer in others.\\

The energy functional given in (\ref{e1.2b}) was first introduced in \cite{de2019regularity} for constant exponents, where the authors established regularity results for multi-phase variational problems. Later, \cite{vetro2024priori} investigated Dirichlet problems driven by multi-phase operators with variable exponents, providing a priori upper bounds for weak solutions. More recently, \cite{dai2024regularity} examined multi-phase operators with variable exponents, analyzing the associated Musielak-Orlicz Sobolev spaces, extending Sobolev embedding results, and establishing essential regularity properties. Additionally, they demonstrated existence and uniqueness results for Dirichlet problems involving gradient-dependent nonlinearity and derived local regularity estimates.\\

To provide historical context, we also discuss the development of double-phase operators associated with the energy functional
\begin{equation}\label{e1.2bc}
u\to \int_\Omega\left(\frac{|\nabla u|^{p}}{p}+\mu(x)\frac{|\nabla u|^{q}}{q}\right)dx.
\end{equation}
This type of functional was introduced in \cite{zhikov1987averaging}, and since then, numerous studies have explored its properties and applications (see, e.g., \cite{baroni2015harnack,baroni2018regularity,colombo2015bounded,colombo2015regularity,marcellini1991regularity,marcellini1989regularity}). The significance of this model extends across multiple disciplines, underscoring its broad applicability.\\

While preparing this article, we could only find the paper \cite{vetro2025multiplicity} where a Kirchhoff-type (i.e. nonlocal) problem involving a multi-phase operator with variable exponents is studied. In this paper, the author investigate a Kirchhoff-type problem involving a multi-phase operator with three variable exponents. The problem features a right-hand side comprising a Carathéodory perturbation, which is defined locally, along with a Kirchhoff term. By employing a generalized version of the symmetric mountain pass theorem and leveraging recent a priori upper bounds for multi-phase problems, the author establishes the existence of sequence of nontrivial solutions which converges to zero in the corresponding Musielak-Orlicz Sobolev space as well as in $L^{\infty}(\Omega)$.\\

The paper is organised as follows. In Section 2, we first provide some background for the theory of variable Sobolev spaces $W_{0}^{1,p(x)}(\Omega)$ and the Musielak-Orlicz Sobolev space $W_0^{1,\mathcal{H}}(\Omega)$, and then obtain a crucial auxiliary result. In Section 3, we set up the first problem where we work with a general nonlinearity $f \in W_0^{1,\mathcal{T}}(\Omega)^{*}$, and obtain the existence and uniqueness result for (\ref{e1.1}). In Section 4, we study the second problem where we specify the nonlinearity $f$ as $f=f(x,u,\nabla u)$, and obtain an existence result for (\ref{e1.1}).

\section{Mathematical Background and Auxiliary Results}

We start with some basic concepts of variable Lebesgue-Sobolev spaces. For more details, and the proof of the following propositions, we refer the reader to \cite{cruz2013variable,diening2011lebesgue,edmunds2000sobolev,fan2001spaces,radulescu2015partial}.
\begin{equation*}
C_{+}\left( \overline{\Omega }\right) =\left\{h\in C\left( \overline{\Omega }\right) ,\text{ } h\left( x\right) >1 \text{ for all\ }x\in
\overline{\Omega }\right\} .
\end{equation*}
For $h\in C_{+}( \overline{\Omega }) $ denote
\begin{equation*}
h^{-}:=\underset{x\in \overline{\Omega }}{\min }h(x) \leq h(x) \leq h^{+}:=\underset{x\in \overline{\Omega }}{\max}h(x) <\infty .
\end{equation*}
For any $h\in C_{+}\left(\overline{\Omega}\right) $, we define \textit{the
variable exponent Lebesgue space} by
\begin{equation*}
L^{h(x)}(\Omega) =\left\{ u\mid u:\Omega\rightarrow\mathbb{R}\text{ is measurable},\int_{\Omega }|u(x)|^{h(x)}dx<\infty \right\}.
\end{equation*}
Then, $L^{h(x)}(\Omega)$ endowed with the norm
\begin{equation*}
|u|_{h(x)}=\inf \left\{ \lambda>0:\int_{\Omega }\left\vert \frac{u(x)}{\lambda }
\right\vert^{h(x)}dx\leq 1\right\} ,
\end{equation*}
becomes a Banach space.
The convex functional $\rho :L^{h(x)}(\Omega) \rightarrow\mathbb{R}$ defined by
\begin{equation*}
\rho(u) =\int_{\Omega }|u(x)|^{h(x)}dx,
\end{equation*}
is called modular on $L^{h(x)}(\Omega)$.
\begin{proposition}\label{Prop:2.2} If $u,u_{n}\in L^{h(x)}(\Omega)$, we have
\begin{itemize}
\item[$(i)$] $|u|_{h(x)}<1 ( =1;>1) \Leftrightarrow \rho(u) <1 (=1;>1);$
\item[$(ii)$] $|u|_{h(x)}>1 \implies |u|_{h(x)}^{h^{-}}\leq \rho(u) \leq |u|_{h(x)}^{h^{+}}$;\newline
$|u|_{h(x)}\leq1 \implies |u|_{h(x)}^{h^{+}}\leq \rho(u) \leq |u|_{h(x)}^{h^{-}};$
\item[$(iii)$] $\lim\limits_{n\rightarrow \infty }|u_{n}-u|_{h(x)}=0\Leftrightarrow \lim\limits_{n\rightarrow \infty }\rho (u_{n}-u)=0 \Leftrightarrow \lim\limits_{n\rightarrow \infty }\rho (u_{n})=\rho(u)$.
\end{itemize}
\end{proposition}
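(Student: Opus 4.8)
The plan is to reduce the entire proposition to a single structural fact about the modular: for every nonzero $u\in L^{h(x)}(\Omega)$ one has the normalization identity $\rho\big(u/|u|_{h(x)}\big)=1$. To obtain this, I would study the scalar function $\varphi(\lambda):=\rho(u/\lambda)=\int_\Omega |u(x)|^{h(x)}\lambda^{-h(x)}\,dx$ on $(0,\infty)$. Since $h^->1>0$, for fixed $u\neq 0$ this $\varphi$ is strictly decreasing, and a dominated/monotone convergence argument shows it is continuous with $\lim_{\lambda\to 0^+}\varphi(\lambda)=+\infty$ and $\lim_{\lambda\to\infty}\varphi(\lambda)=0$. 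Hence there is a unique $\lambda_0$ with $\varphi(\lambda_0)=1$, and the infimum defining the Luxemburg norm is attained exactly at $\lambda_0$; that is, $|u|_{h(x)}=\lambda_0$ and $\rho\big(u/|u|_{h(x)}\big)=1$.

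Granting this, part $(i)$ is immediate from the strict monotonicity of $\varphi$: comparing $\lambda=1$ with $\lambda=|u|_{h(x)}$ and using $\varphi(|u|_{h(x)})=1$ shows that $\rho(u)=\varphi(1)$ is $<1$, $=1$, or $>1$ precisely when $|u|_{h(x)}$ is $<1$, $=1$, or $>1$; since the three cases are mutually exclusive and exhaustive, each equivalence is automatically two-sided. For part $(ii)$ I would factor $u=t\,v$ with $t:=|u|_{h(x)}$ and $v:=u/t$, so that $\rho(u)=\int_\Omega t^{\,h(x)}\,|v(x)|^{h(x)}\,dx$ while $\rho(v)=1$. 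The only remaining ingredient is the elementary inequality $a^{h^-}\le a^{h(x)}\le a^{h^+}$ valid for $a\ge 1$ (with the chain reversed for $0<a\le 1$), which follows from $h(x)\in[h^-,h^+]$ and monotonicity of $s\mapsto a^s$. Applying it with $a=t$ and integrating against the density $|v|^{h(x)}$, whose integral is $\rho(v)=1$, yields both inequalities of $(ii)$ at once.

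Finally, part $(iii)$ I would deduce from $(ii)$ applied to $w_n:=u_n-u$. If $|w_n|_{h(x)}\to 0$, then eventually $|w_n|_{h(x)}\le 1$, so $\rho(w_n)\le |w_n|_{h(x)}^{\,h^-}\to 0$; conversely, if $\rho(w_n)\to 0$, then eventually $\rho(w_n)<1$, whence $|w_n|_{h(x)}<1$ by $(i)$ and $|w_n|_{h(x)}\le \rho(w_n)^{1/h^+}\to 0$. This settles the equivalence of the first two conditions. The link with $\rho(u_n)\to\rho(u)$ is the delicate point and the main obstacle: the implication $\rho(w_n)\to 0\Rightarrow\rho(u_n)\to\rho(u)$ can be extracted from $(ii)$ together with a uniform-integrability (absolute continuity of the integral) argument, but the reverse passage from $\rho(u_n)\to\rho(u)$ cannot hold on the modular data alone—equal modulars do not force closeness—and becomes valid only when one additionally knows $u_n\to u$ in measure, as in the cited references. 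I would therefore either invoke that extra hypothesis explicitly or route the third equivalence through convergence in measure, which is where the genuine work lies.
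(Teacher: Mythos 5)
Your proposal is correct and follows the standard route: the paper itself gives no proof of this proposition, deferring to the cited references (Fan--Zhao, Diening et al.), and the argument found there is exactly the one you outline — the normalization identity $\rho\bigl(u/|u|_{h(x)}\bigr)=1$ obtained from the strict monotonicity and continuity of $\lambda\mapsto\rho(u/\lambda)$, followed by the pointwise comparison $t^{h^-}\le t^{h(x)}\le t^{h^+}$ for $t\ge 1$ (reversed for $t\le 1$) integrated against the unit-modular density. Parts $(i)$ and $(ii)$ are complete as you present them.

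Your reservation about part $(iii)$ is well founded and worth stating explicitly: the equivalence $\rho(u_n-u)\to 0\Leftrightarrow\rho(u_n)\to\rho(u)$ is false as literally written. Take $\Omega=(0,1)$, $h\equiv 2$, $u=\chi_{(0,1/2)}$ and $u_n=\chi_{(1/2,1)}$ for all $n$; then $\rho(u_n)=\tfrac12=\rho(u)$, yet $\rho(u_n-u)=1$. The correct statement in the references (e.g.\ Fan--Zhao, Theorem 1.4) replaces the third condition by ``$u_n\to u$ in measure \emph{and} $\rho(u_n)\to\rho(u)$,'' and with that extra hypothesis the implication is recovered via uniform integrability exactly as you suggest. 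So you have not left a gap; you have correctly identified that the proposition as transcribed here is slightly stronger than what is true, and your proof of the two legitimate equivalences in $(iii)$ (norm convergence iff modular convergence of the differences) is sound.
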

\begin{proposition}\label{Prop:2.2bb}
Let $h_1(x)$ and $h_2(x)$ be measurable functions such that $h_1\in L^{\infty}(\Omega )$ and $1\leq h_1(x)h_2(x)\leq \infty$ for a.e. $x\in \Omega$. Let $u\in L^{h_2(x)}(\Omega ),~u\neq 0$. Then
\begin{itemize}
\item[$(i)$] $\left\vert u\right\vert _{h_1(x)h_2(x)}\leq 1\text{\ }\Longrightarrow
\left\vert u\right\vert_{h_1(x)h_2(x)}^{h_1^{+}}\leq \left\vert \left\vert
u\right\vert ^{h_1(x)}\right\vert_{h_2(x)}\leq \left\vert
u\right\vert _{h_1(x)h_2(x)}^{h_1^{-}}$
\item[$(ii)$] $\left\vert u\right\vert_{h_1(x)h_2(x)}\geq 1\ \Longrightarrow \left\vert
u\right\vert_{h_1(x)h_2(x)}^{h_1^{-}}\leq \left\vert \left\vert u\right\vert^{h_1(x)}\right\vert_{h_2(x)}\leq \left\vert u\right\vert_{h_1(x)h_2(x)}^{h_1^{+}}$
\item[$(iii)$] In particular, if $h_1(x)=h$ is constant then
\begin{equation*}
\left\vert \left\vert u\right\vert^{h}\right\vert_{h_2(x)}=\left\vert u\right\vert _{hh_2(x)}^{h}.
\end{equation*}
\end{itemize}
\end{proposition}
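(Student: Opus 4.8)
The plan is to reduce everything to the modular characterisation in Proposition~\ref{Prop:2.2} by comparing two modulars at a cleverly chosen scaling level. Writing $\alpha := |u|_{h_1(x)h_2(x)}$ and $\beta := \||u|^{h_1(x)}\|_{h_2(x)}$, I would introduce the modular $J(\lambda) := \int_\Omega \left(|u|^{h_1(x)}/\lambda\right)^{h_2(x)}\,dx$ associated with $|u|^{h_1(x)}$ in $L^{h_2(x)}(\Omega)$, so that by the definition of the Luxemburg norm $\beta = \inf\{\lambda>0 : J(\lambda)\le 1\}$ and $J$ is non-increasing in $\lambda$. The whole argument rests on the pointwise factorisation
\[
\left(\frac{|u(x)|^{h_1(x)}}{\lambda}\right)^{h_2(x)} = \left(\frac{|u(x)|}{\alpha}\right)^{h_1(x)h_2(x)}\left(\frac{\alpha^{h_1(x)}}{\lambda}\right)^{h_2(x)},
\]
obtained by inserting and cancelling $\alpha^{h_1(x)}$. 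Testing this with the two natural choices $\lambda = \alpha^{h_1^-}$ and $\lambda = \alpha^{h_1^+}$ turns the second factor into $\alpha^{(h_1(x)-h_1^{\mp})h_2(x)}$, whose exponent has a sign controlled by $h_1^- \le h_1(x) \le h_1^+$.

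Next I would run the four sign analyses. For part $(i)$ (the regime $\alpha\le 1$): with $\lambda=\alpha^{h_1^-}$ the exponent $(h_1(x)-h_1^-)h_2(x)\ge 0$ forces the second factor $\le 1$, so $J(\alpha^{h_1^-}) \le \int_\Omega (|u|/\alpha)^{h_1(x)h_2(x)}\,dx \le 1$ and hence $\beta\le \alpha^{h_1^-}$; with $\lambda=\alpha^{h_1^+}$ the exponent $(h_1(x)-h_1^+)h_2(x)\le 0$ forces the second factor $\ge 1$, so $J(\alpha^{h_1^+})\ge \int_\Omega(|u|/\alpha)^{h_1(x)h_2(x)}\,dx = 1$, giving $\beta\ge \alpha^{h_1^+}$. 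Part $(ii)$ (the regime $\alpha\ge 1$) is the mirror image: the same two test levels produce the reversed inequalities because $\alpha^{s}$ is increasing rather than decreasing in $s$, yielding $\alpha^{h_1^-}\le \beta\le \alpha^{h_1^+}$. The upper bounds use only $\int_\Omega(|u|/\alpha)^{h_1(x)h_2(x)}\,dx\le 1$ (valid for any Luxemburg norm), whereas the lower bounds require the modular-at-the-norm identity $\int_\Omega(|u|/\alpha)^{h_1(x)h_2(x)}\,dx=1$.

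For part $(iii)$ I would avoid inequalities altogether: when $h_1(x)\equiv h$ is constant the substitution $\lambda=\sigma^h$ gives $\lambda^{h_2(x)}=\sigma^{h h_2(x)}$ exactly, so $J(\sigma^h)=\int_\Omega(|u|/\sigma)^{h h_2(x)}\,dx$, and taking the infimum over admissible $\sigma$ together with the monotonicity of $\sigma\mapsto\sigma^h$ yields the exact identity $\||u|^h\|_{h_2(x)} = |u|_{h h_2(x)}^h$; this also serves as a consistency check, since setting $h_1^-=h_1^+=h$ collapses $(i)$ and $(ii)$ into this single equality.

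The step I expect to be the main obstacle is the justification of the modular-at-the-norm identity $\int_\Omega(|u|/\alpha)^{h_1(x)h_2(x)}\,dx=1$ used in the two lower bounds, together with the clean conversion ``$J(\lambda_0)\ge 1 \Rightarrow \beta\ge\lambda_0$''. The identity is immediate from Proposition~\ref{Prop:2.2}$(i)$ when $h_1(x)h_2(x)$ has finite supremum; since $h_1\in L^\infty(\Omega)$ the only delicate case is an unbounded $h_2$, where the equality must be handled through continuity of the modular and the hypothesis $u\neq 0$, which prevents the norm from degenerating. The inference $J(\lambda_0)\ge 1\Rightarrow\lambda_0\le\beta$ then follows from the monotonicity of $J$ and the definition of $\beta$ as an infimum. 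Once these two technical points are secured, the remaining computations are the routine sign bookkeeping indicated above.
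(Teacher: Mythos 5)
The paper offers no proof of Proposition~\ref{Prop:2.2bb}: it is quoted as standard from the variable-exponent references cited at the start of Section~2, so there is no in-paper argument to measure you against. Your argument is the standard modular-rescaling proof and is correct: the pointwise factorisation $\bigl(|u|^{h_1(x)}/\lambda\bigr)^{h_2(x)}=\bigl(|u|/\alpha\bigr)^{h_1(x)h_2(x)}\bigl(\alpha^{h_1(x)}/\lambda\bigr)^{h_2(x)}$ is exact, the four sign analyses at the test levels $\lambda=\alpha^{h_1^{\pm}}$ yield precisely the stated two-sided bounds, and part $(iii)$ follows from the substitution $\lambda=\sigma^{h}$. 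The only fragile step is the one you flag yourself: the two lower bounds invoke the modular-at-the-norm identity $\int_\Omega(|u|/\alpha)^{h_1(x)h_2(x)}\,dx=1$, which Proposition~\ref{Prop:2.2}$(i)$ guarantees only for bounded exponents, whereas the hypothesis here permits $h_1(x)h_2(x)$ to be unbounded. You can eliminate this dependence entirely: take an arbitrary admissible $\lambda$ with $J(\lambda)\le 1$, rewrite $(|u|^{h_1(x)}/\lambda)^{h_2(x)}=(|u|/\lambda^{1/h_1(x)})^{h_1(x)h_2(x)}$, and use $\lambda^{1/h_1(x)}\le\lambda^{1/h_1^{+}}$ when $\lambda\le 1$ and $\lambda^{1/h_1(x)}\le\lambda^{1/h_1^{-}}$ when $\lambda\ge 1$; monotonicity of the modular under pointwise domination then shows that every admissible $\lambda$ satisfies $\lambda\ge\alpha^{h_1^{+}}$ in the regime of $(i)$ and $\lambda\ge\alpha^{h_1^{-}}$ in the regime of $(ii)$, and taking the infimum yields the lower bounds using only the definitions of $\alpha$ and $\beta$ as infima. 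With that substitution your proof is complete in the full generality of the statement; as written it is complete in the bounded-exponent setting actually used in the paper.
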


The variable exponent Sobolev space $W^{1,h(x)}( \Omega)$ is defined by
\begin{equation*}
W^{1,h(x)}( \Omega) =\{u\in L^{p(x) }(\Omega) : |\nabla u| \in L^{h(x)}(\Omega)\},
\end{equation*}
with the norm
\begin{equation*}
\|u\|_{1,h(x)}:=|u|_{h(x)}+|\nabla u|_{h(x)},
\end{equation*}
for all $u\in W^{1,h(x)}(\Omega)$.\\
\begin{proposition}\label{Prop:2.4} If $1<h^{-}\leq h^{+}<\infty $, then the spaces
$L^{h(x)}(\Omega)$ and $W^{1,h(x)}(\Omega)$ are separable and reflexive Banach spaces.
\end{proposition}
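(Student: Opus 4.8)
The plan is to reduce everything to the scalar space $L^{h(x)}(\Omega)$, whose Banach structure is already granted above. Indeed, the map $J\colon W^{1,h(x)}(\Omega)\to \prod_{i=0}^{N} L^{h(x)}(\Omega)$ given by $J(u)=(u,\partial_1 u,\dots,\partial_N u)$ is, by the very definition of the norm $\|\cdot\|_{1,h(x)}$, an isometry onto a closed subspace of the finite product $(L^{h(x)}(\Omega))^{N+1}$. Since separability and reflexivity pass to finite products and are inherited by closed subspaces (a closed subspace of a reflexive space is reflexive, and any subset of a separable metric space is separable), it suffices to prove that $L^{h(x)}(\Omega)$ is separable and reflexive. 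The remainder of the argument therefore concentrates on the scalar case.

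For separability, I would first exploit the hypothesis $1<h^{-}\le h^{+}<\infty$, which forces the modular $\rho$ to satisfy the $\Delta_2$-condition; by Proposition~\ref{Prop:2.2}(iii) this makes modular convergence and norm convergence equivalent, so density may be checked at the level of the modular. I would then show that simple functions are dense in $L^{h(x)}(\Omega)$, approximating an arbitrary $u$ by truncated simple functions and passing to the limit via dominated convergence applied to $\rho$. Finally, every simple function is approximated in modular by the countable family of functions taking rational values on finite unions of cubes with rational vertices contained in $\Omega$; this countable family is then dense, giving separability.

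For reflexivity, the cleanest route is uniform convexity combined with the Milman--Pettis theorem. The key step is a modular uniform convexity estimate: using the elementary Clarkson-type convexity inequalities for the pointwise exponents $h(x)\in[h^{-},h^{+}]\subset(1,\infty)$, applied pointwise and then integrated, one shows that for every $\varepsilon>0$ there is $\delta>0$ with $\rho\big(\tfrac{u+v}{2}\big)\le \tfrac{1}{2}\big(\rho(u)+\rho(v)\big)-\delta$ whenever $\rho(u),\rho(v)\le 1$ and $\rho(u-v)\ge\varepsilon$. Translating this modular inequality into the norm through the two-sided bounds of Proposition~\ref{Prop:2.2}(i)--(ii) yields uniform convexity of $(L^{h(x)}(\Omega),|\cdot|_{h(x)})$, and Milman--Pettis then gives reflexivity. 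Together with the reduction of the first paragraph, $W^{1,h(x)}(\Omega)$ is separable and reflexive as well.

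The main obstacle is the uniform convexity estimate. Because the exponent varies with $x$, the classical Clarkson inequalities cannot be invoked verbatim; one must run the pointwise convexity argument uniformly over the compact exponent range $[h^{-},h^{+}]$ and then control carefully the passage between the modular and the Luxemburg norm. This is precisely where the strict bounds $1<h^{-}\le h^{+}<\infty$ are essential, as they prevent the loss of uniform convexity that occurs when the exponent degenerates toward $1$ or $\infty$. An alternative is to identify the dual $(L^{h(x)}(\Omega))^{*}\cong L^{h'(x)}(\Omega)$ with $\tfrac{1}{h(x)}+\tfrac{1}{h'(x)}=1$ and to verify surjectivity of the canonical embedding into the bidual, but this duality computation carries essentially the same analytic difficulty.
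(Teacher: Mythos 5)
The paper does not prove this proposition at all: it is quoted as standard background, with the proof deferred to the cited references \cite{fan2001spaces,diening2011lebesgue,cruz2013variable}. Your proposal reproduces the standard argument from those sources (reduction of $W^{1,h(x)}(\Omega)$ to a closed subspace of a finite product of copies of $L^{h(x)}(\Omega)$, density of rational simple functions for separability, and uniform convexity plus Milman--Pettis for reflexivity under $1<h^-\leq h^+<\infty$) and is correct in outline, up to the harmless point that $J$ is an isomorphism onto a closed subspace rather than a literal isometry, since $|\nabla u|_{h(x)}$ is the norm of the Euclidean length of the gradient rather than the sum of the norms of the partial derivatives.
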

The space $W_{0}^{1,h(x)}(\Omega)$ is defined as
$\overline{C_{0}^{\infty }(\Omega )}^{\|\cdot\|_{1,h(x)}}=W_{0}^{1,h(x)}(\Omega)$, and hence, it is the smallest closed set that contains $C_{0}^{\infty }(\Omega )$. Therefore, $W_{0}^{1,h(x)}(\Omega)$ is also a separable and reflexive Banach space due to the inclusion $W_{0}^{1,h(x)}(\Omega) \subset W^{1,h(x)}(\Omega)$. \\
Note that as  a consequence of Poincar\'{e} inequality, $\|u\|_{1,h(x)}$ and $|\nabla u|_{h(x)}$ are equivalent
norms on $W_{0}^{1,h(x)}(\Omega)$. Therefore, for any $u\in W_{0}^{1,h(x)}(\Omega)$ we can define an equivalent norm $\|u\|$ such that
\begin{equation*}
\|u\| :=|\nabla u|_{h(x)}.
\end{equation*}
\begin{proposition}\label{Prop:2.5} Let $m\in C(\overline{\Omega })$. If $1\leq m(x)<h^{\ast }(x)$ for all $x\in
\overline{\Omega}$, then the embeddings $W^{1,h(x)}(\Omega) \hookrightarrow L^{m(x)}(\Omega)$ and $W_0^{1,h(x)}(\Omega) \hookrightarrow L^{m(x)}(\Omega)$  are compact and continuous, where
$h^{\ast}(x) =\left\{\begin{array}{cc}
\frac{Nh(x) }{N-h(x)} & \text{if }h(x)<N, \\
+\infty & \text{if }h(x) \geq N.
\end{array}
\right. $
\end{proposition}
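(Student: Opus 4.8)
The plan is to reduce the two variable-exponent embeddings to the classical constant-exponent Rellich--Kondrachov theorem by a localization (\emph{frozen exponent}) argument, carried out on a finite open cover of the compact set $\overline{\Omega}$ and reassembled by summation. For the full space $W^{1,h(x)}(\Omega)$ the Lipschitz regularity of $\partial\Omega$ lets me invoke a bounded extension operator $W^{1,h(x)}(\Omega)\to W^{1,h(x)}(\mathbb{R}^N)$, so it suffices to treat functions supported near $\overline\Omega$; for $W_0^{1,h(x)}(\Omega)$ I extend by zero. The same local estimates then yield both stated embeddings. Throughout I would move between norms and modulars via Proposition \ref{Prop:2.2}, using that on a bounded domain $L^{h(x)}(\Omega)\hookrightarrow L^{s}(\Omega)$ for every constant $s\le h^{-}$, which lets me dominate variable-exponent gradient norms by constant-exponent ones on each patch.

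First I would establish continuity, i.e. a bound $|u|_{m(x)}\le C\,\|u\|_{1,h(x)}$. Fix $x_0\in\overline\Omega$. If $h(x_0)<N$ then $m(x_0)<h^{\ast}(x_0)=Nh(x_0)/(N-h(x_0))$, and by continuity of $h$ and $m$ I choose a ball $B_{x_0}$ and constants $a:=\min_{\overline{B}_{x_0}}h$, $b:=\max_{\overline{B}_{x_0}}m$ with $a<N$ and $b<a^{\ast}=Na/(N-a)$; if $h(x_0)\ge N$ then $h^{\ast}(x_0)=+\infty$ while $m(x_0)<\infty$, and shrinking $B_{x_0}$ I arrange either $a$ close enough to $N$ that $a^{\ast}>b$, or $a>N$, in which case Morrey's embedding gives local control in $L^{\infty}$. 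Covering $\overline\Omega$ by finitely many such balls $B_1,\dots,B_k$ and applying the constant-exponent Sobolev inequality $W^{1,a_i}(B_i\cap\Omega)\hookrightarrow L^{b_i}(B_i\cap\Omega)$ on each piece, together with $m(x)\le b_i$ and $h(x)\ge a_i$ there, yields the modular (hence norm) bound after summation.

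For compactness, let $(u_n)$ be bounded in $W^{1,h(x)}(\Omega)$. On each $B_i\cap\Omega$ the sequence is bounded in the constant-exponent space $W^{1,a_i}$, so classical Rellich--Kondrachov furnishes a subsequence converging strongly in $L^{b_i}(B_i\cap\Omega)$; a diagonal extraction over the finite cover gives one subsequence converging in every $L^{b_i}$ and, after a further extraction, almost everywhere on $\Omega$. To upgrade to convergence in $L^{m(x)}(\Omega)$ I exploit strict subcriticality: since $\overline\Omega$ is compact and $x\mapsto h^{\ast}(x)-m(x)$ is lower semicontinuous with values in $(0,+\infty]$, it attains a positive minimum, so I may fix a finite $\sigma\in C_+(\overline\Omega)$ with $\inf_{\overline\Omega}(\sigma-m)>0$ and $\sigma(x)<h^{\ast}(x)$ wherever $h^{\ast}(x)<\infty$. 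The continuity step applied to $\sigma$ bounds $|u_n|_{\sigma(x)}$ uniformly; this higher integrability together with the a.e.\ convergence gives uniform integrability of $\bigl(|u_n-u|^{m(x)}\bigr)$, so Vitali's theorem yields $\int_\Omega|u_n-u|^{m(x)}\,dx\to0$. By Proposition \ref{Prop:2.2}$(iii)$, applied in $L^{m(x)}(\Omega)$, this modular convergence is equivalent to $|u_n-u|_{m(x)}\to0$, which is the claimed compactness.

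The main obstacle is the interface between the varying exponent and the constant-exponent machinery: the cover and the frozen constants $a_i,b_i$ must be chosen so that the pointwise inequality $m(x)<h^{\ast}(x)$ survives as a genuine margin $b_i<a_i^{\ast}$ on each patch, and the degenerate regime $h(x)\ge N$ where $h^{\ast}=+\infty$ must be handled uniformly by replacing the Sobolev inequality with Morrey's $L^{\infty}$ estimate. A second delicate point is that, under mere continuity of $h$ (rather than log-H\"older regularity), a single global critical inequality $W^{1,h(x)}\hookrightarrow L^{h^{\ast}(x)}$ need not hold; the argument must therefore never use the critical exponent itself, only the strictly subcritical $\sigma$, whose existence is exactly what compactness of $\overline\Omega$ supplies.
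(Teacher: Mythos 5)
The paper does not actually prove Proposition~\ref{Prop:2.5}: it is stated as background material and attributed to the standard references on variable exponent spaces, so there is no in-paper argument to compare yours against. Your localization (frozen-exponent) argument is essentially the classical proof contained in those references (Fan--Shen--Zhao, Edmunds--R\'akosn\'ik, Diening et al.): cover $\overline\Omega$ by finitely many patches on which the oscillation of $h$ and $m$ is small enough that the strict inequality $m<h^{*}$ survives as a margin $b_i<a_i^{*}$, apply the constant-exponent Sobolev and Rellich--Kondrachov theorems on each patch, and reassemble over the finite cover; your handling of the regime $h\geq N$ via Morrey, and your upgrade from $L^{b_i}$-convergence to $L^{m(x)}$-convergence via a strictly subcritical $\sigma$ and Vitali's theorem, are both sound. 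The one step you should delete or justify differently is the appeal to a bounded extension operator $W^{1,h(x)}(\Omega)\to W^{1,h(x)}(\mathbb{R}^N)$: for merely continuous exponents such an operator is not available off the shelf (the known extension theorems for variable exponent Sobolev spaces require log-H\"older continuity of the exponent), and fortunately you never actually need it, since your local estimates are carried out on $B_i\cap\Omega$ and the constant-exponent Sobolev inequality holds there because these subdomains inherit the cone property from the Lipschitz boundary of $\Omega$. With that step removed and the finite-cover summation made explicit (subadditivity of the modular $\int_\Omega|u|^{m(x)}dx$ over the cover, then Proposition~\ref{Prop:2.2} to return to norms), the proof is complete; as a small simplification, strong convergence in each $L^{b_i}(B_i\cap\Omega)$ already gives convergence in $L^{m(x)}(B_i\cap\Omega)$ because $m\leq b_i$ there and the patches have finite measure, so the Vitali argument, while correct, is dispensable.
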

In the sequel, we introduce the multi-phase operator, the Musielak–Orlicz space, and the Musielak–Orlicz Sobolev space, respectively.\\
We make the following assumptions.
\begin{itemize}
\item[$(H_1)$] $p,q,r,s\in C_+(\overline{\Omega})$ with $p(x)<N$; $1<p^-\leq p(x)<q(x)<r(x)<s(x)<p^*(x)$; $s^+<p^*(x)$.
\item[$(H_2)$] $\mu_1(\cdot),\mu_2(\cdot)\in L^\infty(\Omega)$ such that $\mu_1(x)\geq 0$ and $\mu_2(x)\geq 0$ for all $x\in
\overline{\Omega }$.
\end{itemize}
Under the assumptions $(H_1)$ and $(H_2)$, we define the nonlinear function $\mathcal{T}:\Omega\times [0,\infty]\to [0,\infty]$, i.e. the \textit{multi-phase operator}, by
\[
\mathcal{T}(x,t)=t^{p(x)}+\mu_1(x)t^{q(x)}+\mu_2(x)t^{r(x)}\ \text{for all}\ (x,t)\in \Omega\times [0,\infty].
\]
Then the corresponding modular $\rho_\mathcal{T}(\cdot)$ is given by
\[
\displaystyle\rho_\mathcal{T}(u):=\int_\Omega\mathcal{T}(x,|u|)dx=
\int_\Omega\left(|u(x)|^{p(x)}+\mu_1(x)|u(x)|^{q(x)}+\mu_2(x)|u(x)|^{r(x)}\right)dx.
\]
The \textit{Musielak-Orlicz space} $L^{\mathcal{T}}(\Omega)$, is defined by
\[
L^{\mathcal{T}}(\Omega)=\left\{u:\Omega\to \mathbb{R}\,\, \text{measurable};\,\, \rho_{\mathcal{T}}(u)<+\infty\right\},
\]
endowed with the Luxemburg norm
\[
\|u\|_{\mathcal{T}}:=\inf\left\{\zeta>0: \rho_{\mathcal{T}}\left(\frac{u}{\zeta}\right)\leq 1\right\}.
\]
Analogous to Proposition \ref{Prop:2.2}, there are similar relationship between the modular $\rho_{\mathcal{T}}(\cdot)$ and the norm $\|\cdot\|_{\mathcal{T}}$, see \cite[Proposition 3.2]{dai2024regularity} for a detailed proof.

\begin{proposition}\label{Prop:2.2a}
Assume $(H_1)$ hold, and $u\in L^{\mathcal{H}}(\Omega)$. Then
\begin{itemize}
\item[$(i)$] If $u\neq 0$, then $\|u\|_{\mathcal{T}}=\zeta\Leftrightarrow \rho_{\mathcal{T}}(\frac{u}{\zeta})=1$,
\item[$(ii)$] $\|u\|_{\mathcal{T}}<1\ (\text{resp.}\ >1, =1)\Leftrightarrow \rho_{\mathcal{T}}(\frac{u}{\zeta})<1\ (\text{resp.}\ >1, =1)$,
\item[$(iii)$] If $\|u\|_{\mathcal{T}}<1\Rightarrow \|u\|_{\mathcal{T}}^{r^+}\leq \rho_{\mathcal{T}}(u)\leq \|u\|_{\mathcal{T}}^{p^-}$,
\item[$(iv)$]If $\|u\|_{\mathcal{T}}>1\Rightarrow \|u\|_{\mathcal{T}}^{p^-}\leq \rho_{\mathcal{T}}(u)\leq \|u\|_{\mathcal{T}}^{r^+}$,
\item[$(v)$] $\|u\|_{\mathcal{T}}\to 0\Leftrightarrow \rho_{\mathcal{T}}(u)\to 0$,
\item[$(vi)$]$\|u\|_{\mathcal{T}}\to +\infty\Leftrightarrow \rho_{\mathcal{T}}(u)\to +\infty$,
\item[$(vii)$] $\|u\|_{\mathcal{T}}\to 1\Leftrightarrow \rho_{\mathcal{T}}(u)\to 1$,
\item[$(viii)$] If $u_n\to u$ in $L^{\mathcal{T}}(\Omega)$, then $\rho_{\mathcal{T}}(u_n)\to\rho_{\mathcal{T}}(u)$.
\end{itemize}
\end{proposition}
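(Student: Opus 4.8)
The plan is to reduce every item to two elementary scaling inequalities for the modular, together with the continuity and strict monotonicity of the rescaled modular. First I would record the pointwise estimates: since each exponent $e(x)\in\{p(x),q(x),r(x)\}$ satisfies $p^-\le e(x)\le r^+$, for $\lambda\ge 1$ one has $\lambda^{p^-}t^{e(x)}\le(\lambda t)^{e(x)}\le\lambda^{r^+}t^{e(x)}$, while for $0<\lambda\le 1$ the inequalities reverse. Integrating these against the defining sum yields, for $\lambda\ge 1$,
\[
\lambda^{p^-}\rho_{\mathcal{T}}(u)\le\rho_{\mathcal{T}}(\lambda u)\le\lambda^{r^+}\rho_{\mathcal{T}}(u),
\]
and for $0<\lambda\le 1$,
\[
\lambda^{r^+}\rho_{\mathcal{T}}(u)\le\rho_{\mathcal{T}}(\lambda u)\le\lambda^{p^-}\rho_{\mathcal{T}}(u).
\]
In particular $\rho_{\mathcal{T}}(2u)\le 2^{r^+}\rho_{\mathcal{T}}(u)$, i.e. $\mathcal{T}$ satisfies a $\Delta_2$-type condition, which will be the key to item $(viii)$.

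For item $(i)$ I would study the map $\zeta\mapsto\rho_{\mathcal{T}}(u/\zeta)$ for fixed $u\ne 0$. Monotone convergence gives $\rho_{\mathcal{T}}(u/\zeta)\to+\infty$ as $\zeta\to 0^+$, and dominated convergence gives $\rho_{\mathcal{T}}(u/\zeta)\to 0$ as $\zeta\to+\infty$; the same theorems give continuity on $(0,\infty)$, while the pointwise strict monotonicity of $\zeta\mapsto|u(x)|/\zeta$ on the set $\{u\ne 0\}$ (of positive measure) makes $\zeta\mapsto\rho_{\mathcal{T}}(u/\zeta)$ strictly decreasing. The intermediate value theorem then produces a unique $\zeta_0$ with $\rho_{\mathcal{T}}(u/\zeta_0)=1$, and by the definition of the Luxemburg norm as an infimum this $\zeta_0$ is exactly $\|u\|_{\mathcal{T}}$, which is $(i)$. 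Items $(ii)$, $(iii)$ and $(iv)$ then follow by writing $u=\|u\|_{\mathcal{T}}\cdot(u/\|u\|_{\mathcal{T}})$ and applying the scaling inequalities with $\lambda=\|u\|_{\mathcal{T}}$ to $\rho_{\mathcal{T}}(u/\|u\|_{\mathcal{T}})=1$: for instance, when $\|u\|_{\mathcal{T}}>1$ this gives directly $\|u\|_{\mathcal{T}}^{p^-}\le\rho_{\mathcal{T}}(u)\le\|u\|_{\mathcal{T}}^{r^+}$, and the case $\|u\|_{\mathcal{T}}<1$ is symmetric.

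Items $(v)$, $(vi)$ and $(vii)$ are immediate consequences of $(iii)$ and $(iv)$: the two-sided power bounds squeeze $\rho_{\mathcal{T}}(u)$ between $\|u\|_{\mathcal{T}}^{p^-}$ and $\|u\|_{\mathcal{T}}^{r^+}$ (in the appropriate order), so $\|u\|_{\mathcal{T}}\to 0,\,+\infty,\,1$ forces $\rho_{\mathcal{T}}(u)\to 0,\,+\infty,\,1$ and conversely. The genuinely substantial step, and the one I expect to be the main obstacle, is the continuity of the modular in $(viii)$, where modular and norm must be reconciled along a convergent sequence. Here I would first use $(v)$ to turn $\|u_n-u\|_{\mathcal{T}}\to 0$ into $\rho_{\mathcal{T}}(u_n-u)\to 0$, pass to an a.e.\ convergent subsequence, and exploit convexity of $t\mapsto\mathcal{T}(x,t)$ together with the $\Delta_2$-bound: from $|u_n|\le|u_n-u|+|u|$ one gets
\[
\mathcal{T}(x,|u_n|)\le\tfrac12\mathcal{T}(x,2|u_n-u|)+\tfrac12\mathcal{T}(x,2|u|),
\]
whose right-hand side converges a.e.\ to $\tfrac12\mathcal{T}(x,2|u|)$ and whose integral converges to $\tfrac12\rho_{\mathcal{T}}(2u)$ because $\rho_{\mathcal{T}}(2(u_n-u))\le 2^{r^+}\rho_{\mathcal{T}}(u_n-u)\to 0$. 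The generalized dominated convergence theorem then yields $\rho_{\mathcal{T}}(u_n)\to\rho_{\mathcal{T}}(u)$ along the subsequence, and the usual subsequence-of-subsequence argument upgrades this to the full sequence. The delicacy here is precisely the equi-integrability supplied by $\Delta_2$; without it, modular convergence of the differences would not transfer to convergence of the modular of the limit.
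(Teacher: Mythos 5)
The paper does not actually prove this proposition: it is stated as a known result, with the proof deferred to the cited reference [Proposition 3.2, dai2024regularity], so there is no in-paper argument to compare yours against line by line. Judged on its own, your proof is correct and is essentially the standard argument for norm--modular relations in Musielak--Orlicz spaces. The two scaling inequalities $\lambda^{p^-}\rho_{\mathcal{T}}(u)\le\rho_{\mathcal{T}}(\lambda u)\le\lambda^{r^+}\rho_{\mathcal{T}}(u)$ for $\lambda\ge 1$ (reversed for $\lambda\le 1$) are exactly what is needed, since every exponent appearing in $\mathcal{T}(x,t)=t^{p(x)}+\mu_1(x)t^{q(x)}+\mu_2(x)t^{r(x)}$ lies in $[p^-,r^+]$; your derivation of $(i)$ via strict monotonicity and continuity of $\zeta\mapsto\rho_{\mathcal{T}}(u/\zeta)$, of $(ii)$--$(iv)$ by evaluating the scaling inequality at $\lambda=\|u\|_{\mathcal{T}}$ against $\rho_{\mathcal{T}}(u/\|u\|_{\mathcal{T}})=1$, and of $(v)$--$(vii)$ as squeezes from $(iii)$--$(iv)$ is sound. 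Your treatment of $(viii)$ correctly identifies the only nontrivial point: the $\Delta_2$-type bound $\rho_{\mathcal{T}}(2v)\le 2^{r^+}\rho_{\mathcal{T}}(v)$ supplies the equi-integrability that lets Pratt's generalized dominated convergence theorem (together with the subsequence-of-subsequences device) upgrade $\rho_{\mathcal{T}}(u_n-u)\to 0$ to $\rho_{\mathcal{T}}(u_n)\to\rho_{\mathcal{T}}(u)$. One cosmetic remark: the proposition as printed contains typographical slips ($u\in L^{\mathcal{H}}(\Omega)$ for $L^{\mathcal{T}}(\Omega)$, and $\rho_{\mathcal{T}}(u/\zeta)$ for $\rho_{\mathcal{T}}(u)$ in item $(ii)$); you have proved the intended, corrected statement, which is the one actually used later in the paper.
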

The \textit{Musielak-Orlicz Sobolev space} $W^{1,\mathcal{T}}(\Omega)$ is defined by
\[
W^{1,\mathcal{T}}(\Omega)=\left\{u\in L^{\mathcal{T}}(\Omega):
|\nabla u|\in L^{\mathcal{H}}(\Omega)\right\},
\]
and equipped with the norm
\[
\|u\|_{1,\mathcal{T}}:=\|\nabla u\|_{\mathcal{T}}+\|u\|_{\mathcal{T}},
\]
where $\|\nabla u\|_{\mathcal{T}}=\|\,|\nabla u|\,\|_{\mathcal{T}}$.\\
The space $W_0^{1,\mathcal{T}}(\Omega)$ is defined as $\overline{C_{0}^{\infty }(\Omega )}^{\|\cdot\|_{1,\mathcal{T}}}=W_0^{1,\mathcal{T}}(\Omega)$. Notice that $L^{\mathcal{T}}(\Omega), W^{1,\mathcal{T}}(\Omega)$ and $W_0^{1,\mathcal{T}}(\Omega)$ are uniformly convex and reflexive Banach spaces, and the following embeddings hold \cite[Propositions 3.1, 3.3]{dai2024regularity}.
\begin{proposition}\label{Prop:2.7a}
Let $(H_1)$ and $(H_2)$ be satisfied. Then the following embeddings hold:
\begin{itemize}
\item[$(i)$] $L^{\mathcal{T}}(\Omega)\hookrightarrow L^{h(\cdot)}(\Omega), W^{1,\mathcal{T}}(\Omega)\hookrightarrow W^{1,h(\cdot)}(\Omega)$, $W_0^{1,\mathcal{T}}(\Omega)\hookrightarrow W_0^{1,h(\cdot)}(\Omega)$ are continuous for all $h\in C(\overline{\Omega})$ with $1\leq h(x)\leq p(x)$ for all $x\in \overline{\Omega}$.
\item[$(ii)$] $W^{1,\mathcal{T}}(\Omega)\hookrightarrow L^{h(\cdot)}(\Omega)$ and $W_0^{1,\mathcal{T}}(\Omega)\hookrightarrow L^{h(\cdot)}(\Omega)$ are compact for all $h\in C(\overline{\Omega})$ with $1\leq h(x)< p^*(x)$ for all $x\in \overline{\Omega}$.
\end{itemize}
\end{proposition}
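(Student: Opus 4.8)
The plan is to reduce every assertion to the corresponding fact for the single variable-exponent space $L^{p(\cdot)}(\Omega)$ (respectively $W^{1,p(\cdot)}(\Omega)$), exploiting the pointwise lower bound built into the multi-phase function. Since $\mu_1,\mu_2\geq 0$ by $(H_2)$, we have $\mathcal{T}(x,t)=t^{p(x)}+\mu_1(x)t^{q(x)}+\mu_2(x)t^{r(x)}\geq t^{p(x)}$ for all $(x,t)$, and hence $\rho_{\mathcal{T}}(u)\geq\int_\Omega|u|^{p(x)}\,dx$ for every measurable $u$. The first step is to upgrade this modular inequality to the norm inequality $|u|_{p(\cdot)}\leq\|u\|_{\mathcal{T}}$, which follows directly from the definition of the Luxemburg norms: if $\rho_{\mathcal{T}}(u/\zeta)\leq 1$, then $\int_\Omega|u/\zeta|^{p(x)}\,dx\leq\rho_{\mathcal{T}}(u/\zeta)\leq 1$, so the admissible set defining $\|\cdot\|_{\mathcal{T}}$ is contained in that defining $|\cdot|_{p(\cdot)}$, and taking infima gives the claim. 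Applying the same argument to $|\nabla u|$ gives $|\nabla u|_{p(\cdot)}\leq\|\nabla u\|_{\mathcal{T}}$, so that $L^{\mathcal{T}}(\Omega)\hookrightarrow L^{p(\cdot)}(\Omega)$, $W^{1,\mathcal{T}}(\Omega)\hookrightarrow W^{1,p(\cdot)}(\Omega)$ and $W_0^{1,\mathcal{T}}(\Omega)\hookrightarrow W_0^{1,p(\cdot)}(\Omega)$ all embed continuously with embedding constant one.

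For part $(i)$ it then remains to compose with the elementary inclusion $L^{p(\cdot)}(\Omega)\hookrightarrow L^{h(\cdot)}(\Omega)$, valid for $1\leq h(x)\leq p(x)$ because $\Omega$ is bounded. I would deduce this from the generalized H\"older inequality applied to $f=|u|^{h(\cdot)}$ and $g\equiv 1$ with the conjugate pair $p(\cdot)/h(\cdot)$ and its dual, which together with Proposition~\ref{Prop:2.2bb} yields $|u|_{h(\cdot)}\leq C(1+|\Omega|)\,|u|_{p(\cdot)}$, the constant depending only on $N$, $|\Omega|$ and the exponents. Chaining $L^{\mathcal{T}}\hookrightarrow L^{p(\cdot)}\hookrightarrow L^{h(\cdot)}$, and its gradient counterpart for the Sobolev statements, then gives $(i)$.

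For part $(ii)$ the strategy is to factor the compact embedding through the variable-exponent Sobolev embedding already recorded. By the first step, $W^{1,\mathcal{T}}(\Omega)\hookrightarrow W^{1,p(\cdot)}(\Omega)$ is continuous; and since $p(x)<N$ under $(H_1)$, so that $p^{*}(x)=Np(x)/(N-p(x))$, Proposition~\ref{Prop:2.5} (applied with Sobolev exponent $p(\cdot)$ and target exponent $h(\cdot)$) shows $W^{1,p(\cdot)}(\Omega)\hookrightarrow L^{h(\cdot)}(\Omega)$ is compact whenever $1\leq h(x)<p^{*}(x)$ for all $x\in\overline{\Omega}$. As the composition of a bounded linear operator with a compact one is compact, $W^{1,\mathcal{T}}(\Omega)\hookrightarrow L^{h(\cdot)}(\Omega)$ is compact, and the same argument with the gradient norm gives the statement for $W_0^{1,\mathcal{T}}(\Omega)$.

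The only genuinely delicate point is the passage from the pointwise bound $\mathcal{T}(x,t)\geq t^{p(x)}$ to quantitative norm control uniform enough to guarantee boundedness of the embedding operators; I expect the monotonicity of the Luxemburg norm under pointwise domination of modulars to be the one step that looks nontrivial, although it is in fact elementary, and once it is in place the remainder is a routine composition of a continuous embedding with either a H\"older-type inclusion (for $(i)$) or the known compact embedding of Proposition~\ref{Prop:2.5} (for $(ii)$). A secondary technical care arises in the H\"older step, where the regions $\{h(x)=p(x)\}$ and $\{h(x)<p(x)\}$ must be treated together and one must check that the conjugate exponent $p(\cdot)/h(\cdot)$ is admissible, which it is since it lies in $C_+(\overline{\Omega})$ where $h<p$ and the inclusion is trivial where $h=p$.
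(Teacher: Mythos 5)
The paper does not actually prove this proposition: it is quoted directly from \cite[Propositions 3.1, 3.3]{dai2024regularity}, so there is no in-paper argument to compare against. Your proof is correct and is the standard route for such results: the pointwise domination $\mathcal{T}(x,t)\ge t^{p(x)}$ gives $|u|_{p(\cdot)}\le\|u\|_{\mathcal{T}}$ by monotonicity of the Luxemburg norm, after which $(i)$ follows from the H\"older-type inclusion $L^{p(\cdot)}(\Omega)\hookrightarrow L^{h(\cdot)}(\Omega)$ on the bounded domain and $(ii)$ from factoring the compact embedding of Proposition \ref{Prop:2.5} (with Sobolev exponent $p(\cdot)$) through the continuous embedding $W^{1,\mathcal{T}}(\Omega)\hookrightarrow W^{1,p(\cdot)}(\Omega)$. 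The only step worth making explicit is that $W_0^{1,\mathcal{T}}(\Omega)$ lands in $W_0^{1,h(\cdot)}(\Omega)$ rather than merely in $W^{1,h(\cdot)}(\Omega)$; this follows because $C_0^\infty(\Omega)$ is dense in $W_0^{1,\mathcal{T}}(\Omega)$ and the continuous embedding carries its closure into the closure of $C_0^\infty(\Omega)$ in the weaker norm.
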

As a conclusion of Proposition \ref{Prop:2.7a}:\\
We have the continuous embedding $W_0^{1,\mathcal{T}}(\Omega)\hookrightarrow L^{h(\cdot)}(\Omega)$, and there is a constant $c_{\mathcal{T}}$ such that
\[
\|u\|_{h(\cdot)}\leq c_{\mathcal{T}}\|u\|_{1,\mathcal{T},0}.
\]
As well, $W_0^{1,\mathcal{H}}(\Omega)$ is compactly embedded in $L^{\mathcal{H}}(\Omega)$.\\
Thus,
$W_0^{1,\mathcal{T}}(\Omega)$ can be equipped with the equivalent norm
\[
\|u\|_{1,\mathcal{T},0}:=\|\nabla u\|_{\mathcal{T}}.
\]
We lastly introduce the seminormed spaces
\begin{equation*}
L^{q(\cdot)}_{\mu_1}(\Omega)=\left\{u:\Omega\to \mathbb{R}\,\, \text{measurable};\,\, \int_\Omega\mu_1(x)|u|^{q(x)}dx<+\infty\right\},
\end{equation*}
and
\begin{equation*}
L^{r(\cdot)}_{\mu_2}(\Omega)=\left\{u:\Omega\to \mathbb{R}\,\, \text{measurable};\,\, \int_\Omega\mu_2(x)|u|^{r(x)}dx<+\infty\right\},
\end{equation*}
which are endow with the seminorms
\begin{equation*}
|u|_{q(\cdot),\mu_1}=\inf\left\{\varsigma_1>0: \int_\Omega\mu_1(x)\left(\frac{|u|}{\varsigma_1}\right)^{q(x)}dx \leq 1 \right\},
\end{equation*}
and
\begin{equation*}
|u|_{r(\cdot),\mu_2}=\inf\left\{\varsigma_2>0: \int_\Omega\mu_2(x)\left(\frac{|u|}{\varsigma_2}\right)^{r(x)}dx \leq 1 \right\},
\end{equation*}
respectively. We have $L^{\mathcal{T}}(\Omega)\hookrightarrow L^{q(\cdot)}_{\mu_1}(\Omega)$ and $L^{\mathcal{T}}(\Omega)\hookrightarrow L^{r(\cdot)}_{\mu_2}(\Omega)$ continuously \cite[Proposition 3.3]{dai2024regularity}.
\begin{proposition}\label{Prop:2.7}
For the convex functional
$$
\varrho_{\mathcal{T}}(u)=\int_\Omega\left(\frac{|\nabla u|^{p(x)}}{p(x)}+\mu_1(x)\frac{|\nabla u|^{q(x)}}{q(x)}+\mu_2(x)\frac{|\nabla u|^{r(x)}}{r(x)}\right)dx,
$$
we have the following  \cite{dai2024regularity}:
\begin{itemize}
\item[$(i)$] $\varrho_{\mathcal{T}} \in C^{1}(W_0^{1,\mathcal{T}}(\Omega),\mathbb{R})$ with the derivative
$$
\langle\varrho^{\prime}_{\mathcal{T}}(u),\varphi\rangle=\int_{\Omega}(|\nabla u|^{p(x)-2}\nabla u+\mu_1(x)|\nabla u|^{q(x)-2}\nabla u+\mu_2(x)|\nabla u|^{r(x)-2}\nabla u)\cdot\nabla \varphi dx,
$$
for all  $u, \varphi \in W_0^{1,\mathcal{T}}(\Omega)$, where $\langle \cdot, \cdot\rangle$ is the dual pairing between $W_0^{1,\mathcal{T}}(\Omega)$ and its dual $W_0^{1,\mathcal{T}}(\Omega)^{*}$;
\item[$(ii)$] $\varrho^{\prime}_{\mathcal{T}}$ satisfies the $(S_{+})$-property, i.e.
\begin{equation}\label{e2.7ab}
u_{n} \rightharpoonup u \text{ in } W_0^{1,\mathcal{T}}(\Omega)
\end{equation}
and
\begin{equation}\label{e2.8ab}
\limsup_{n\rightarrow\infty}\langle \varrho^{\prime}_{\mathcal{T}}(u_{n}),u_{n}-u\rangle\leq 0
\end{equation}
imply
\begin{equation}\label{e2.9ab}
u_{n} \to u \text{ in } W_0^{1,\mathcal{T}}(\Omega).
\end{equation}

\end{itemize}
\end{proposition}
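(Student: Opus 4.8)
The plan is to treat parts (i) and (ii) separately, exploiting the additive three-phase structure so that each of the three terms is handled by the same template and then recombined. For the differentiability in part (i), I would fix $u,\varphi \in W_0^{1,\mathcal{T}}(\Omega)$ and study $g(t)=\varrho_{\mathcal{T}}(u+t\varphi)$. Since for each fixed $x$ the map $\xi\mapsto \frac{|\xi|^{h(x)}}{h(x)}$ with $h\in\{p,q,r\}$ is $C^1$ with gradient $|\xi|^{h(x)-2}\xi$, I would differentiate under the integral sign, justifying this by the mean value theorem applied to the difference quotient and a dominated-convergence bound. For $|t|\le 1$ the first-phase integrand is controlled by $(|\nabla u|+|\nabla\varphi|)^{p(x)-1}|\nabla\varphi|$, whose integrability follows from Young's and H\"older's inequalities in $L^{\mathcal{T}}(\Omega)$ together with $\nabla u,\nabla\varphi\in L^{\mathcal{T}}(\Omega)$; the two weighted phases are dominated identically using the continuous embeddings $L^{\mathcal{T}}(\Omega)\hookrightarrow L^{q(\cdot)}_{\mu_1}(\Omega)$ and $L^{\mathcal{T}}(\Omega)\hookrightarrow L^{r(\cdot)}_{\mu_2}(\Omega)$. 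This yields $g'(0)=\langle\varrho'_{\mathcal{T}}(u),\varphi\rangle$ with the stated formula. For the $C^1$ claim I would show $u\mapsto\varrho'_{\mathcal{T}}(u)$ is continuous into the dual: taking $u_n\to u$, the modular–norm relations of Proposition \ref{Prop:2.2a} force modular convergence, hence a.e.\ convergence of the gradients along a subsequence and a suitable majorant, so the Nemytskii-type maps $\nabla u\mapsto|\nabla u|^{h(x)-2}\nabla u$ are continuous and the dual norm of $\varrho'_{\mathcal{T}}(u_n)-\varrho'_{\mathcal{T}}(u)$ tends to zero; a standard subsequence argument removes the subsequence.

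For the $(S_+)$-property in part (ii), I would first use weak convergence. Since $u_n\rightharpoonup u$ and $\varrho'_{\mathcal{T}}(u)\in W_0^{1,\mathcal{T}}(\Omega)^{*}$, we have $\langle\varrho'_{\mathcal{T}}(u),u_n-u\rangle\to 0$; subtracting this from (\ref{e2.8ab}) gives
\[
\limsup_{n\to\infty}\langle \varrho'_{\mathcal{T}}(u_n)-\varrho'_{\mathcal{T}}(u),\,u_n-u\rangle\le 0.
\]
The integrand of this pairing decomposes, phase by phase, into terms of the form $(|\nabla u_n|^{h(x)-2}\nabla u_n-|\nabla u|^{h(x)-2}\nabla u)\cdot(\nabla u_n-\nabla u)$, each of which is pointwise nonnegative by monotonicity of $\xi\mapsto|\xi|^{h(x)-2}\xi$ and carries a nonnegative weight ($1$, $\mu_1$, or $\mu_2$). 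Hence the whole pairing is nonnegative, so it converges to $0$, and since it is a sum of three nonnegative integrals, each of them converges to $0$ individually.

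The final step upgrades this to strong convergence. I would invoke the classical Simon inequalities: $(|\xi|^{h-2}\xi-|\eta|^{h-2}\eta)\cdot(\xi-\eta)\ge c_h|\xi-\eta|^{h}$ when $h\ge 2$, and $\ge c_h|\xi-\eta|^2(|\xi|+|\eta|)^{h-2}$ when $1<h<2$. Applying these pointwise with $h=p(x)$ and weight $1$, splitting $\Omega$ into $\{p(x)\ge 2\}$ and $\{p(x)<2\}$ and using H\"older on the subquadratic region together with the uniform modular bound on $\nabla u_n$, I would deduce $\int_\Omega|\nabla u_n-\nabla u|^{p(x)}\,dx\to 0$. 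The same argument applied to the $\mu_1$- and $\mu_2$-weighted phases controls the remaining two parts of the modular, so $\rho_{\mathcal{T}}(\nabla u_n-\nabla u)\to 0$, and Proposition \ref{Prop:2.2a}(v) then delivers $\|\nabla u_n-\nabla u\|_{\mathcal{T}}\to 0$, i.e.\ $u_n\to u$ in $W_0^{1,\mathcal{T}}(\Omega)$.

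I expect the main obstacle to be exactly this last step. Because the exponents $p,q,r$ are variable and may straddle the value $2$, the Simon inequalities must be localized on $\{h(x)\ge 2\}$ and $\{h(x)<2\}$, and on the subquadratic set one only obtains an $L^2$-type control that has to be converted back into the modular $\int|\nabla u_n-\nabla u|^{h(x)}$ via H\"older and the uniform bound on $\nabla u_n$. A secondary subtlety is that $\mu_1,\mu_2$ may vanish on subsets of $\Omega$, so the weighted phases alone do not control the gradient there; it is precisely the always-present first phase (weight $1$) that guarantees convergence of $\nabla u_n-\nabla u$ on \emph{all} of $\Omega$, after which the weighted phases supply the remaining modular terms.
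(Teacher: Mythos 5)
The paper gives no proof of Proposition \ref{Prop:2.7}: it is imported verbatim from \cite{dai2024regularity} (the author later points to Propositions 4.4--4.5 there), so there is no in-text argument to measure yours against. Your blind proof is the standard self-contained route and its architecture is sound: Gateaux differentiability phase by phase via the mean value theorem, Young's inequality and dominated convergence; continuity of $u\mapsto\varrho'_{\mathcal{T}}(u)$ via continuity of the Nemytskii maps $\xi\mapsto|\xi|^{h(x)-2}\xi$ plus a subsequence argument; and the $(S_+)$-property from the pointwise monotonicity of these maps, Simon's vector inequalities, and the modular--norm equivalence of Proposition \ref{Prop:2.2a}(v). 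Your observation that the unweighted $p(x)$-phase is what forces gradient convergence on all of $\Omega$ (since $\mu_1,\mu_2$ may vanish) is exactly the right structural point, and the reduction of (\ref{e2.8ab}) to $\limsup_n\langle\varrho'_{\mathcal{T}}(u_n)-\varrho'_{\mathcal{T}}(u),u_n-u\rangle\le 0$ correctly uses that $\varrho'_{\mathcal{T}}(u)$ is a fixed dual element (i.e.\ part (i) must precede part (ii)). The only place that needs genuine care in a full write-up is the subquadratic region $\{x:h(x)<2\}$: converting $\int|\nabla u_n-\nabla u|^{2}(|\nabla u_n|+|\nabla u|)^{h(x)-2}dx\to0$ into $\int|\nabla u_n-\nabla u|^{h(x)}dx\to0$ requires the variable-exponent H\"older inequality with the conjugate pair $2/h(x)$ and $2/(2-h(x))$ together with Proposition \ref{Prop:2.2bb}, and one must use that $\rho_{\mathcal{T}}(\nabla u_n)$ is bounded uniformly in $n$ (weakly convergent sequences are bounded, then Proposition \ref{Prop:2.2a}) to control the conjugate factor; the set where both gradients vanish is harmless because the integrand vanishes there. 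You flag this yourself, so I regard it as a detail to be executed rather than a gap.
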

\begin{remark}\label{Rem:3.4ab}
Notice that by Propositions \ref{Prop:2.2a} and the equivalency of the norms $\|u\|_{1,\mathcal{T},0}$ and $\|\nabla u\|_{\mathcal{T}}$, we have the relations:
\begin{equation}\label{e2.de}
\frac{1}{r^+}\|u\|^{p^-}_{1,\mathcal{T},0}\leq \frac{1}{r^+}\rho_{\mathcal{T}}(\nabla u)\leq \varrho_{\mathcal{T}}(u) \leq \frac{1}{p^-}\rho_{\mathcal{T}}(\nabla u)\leq \frac{1}{p^-}\|u\|^{r^+}_{1,\mathcal{T},0} \text{ if } \|u\|_{1,\mathcal{T},0}>1,
\end{equation}
\begin{equation}\label{e2.df}
\frac{1}{r^+}\|u\|^{r^+}_{1,\mathcal{T},0}\leq \frac{1}{r^+}\rho_{\mathcal{T}}(\nabla u)\leq \varrho_{\mathcal{T}}(u) \leq \frac{1}{p^-}\rho_{\mathcal{T}}(\nabla u)\leq \frac{1}{p^-}\|u\|^{p^-}_{1,\mathcal{T},0} \text{ if } \|u\|_{1,\mathcal{T},0}\leq 1.
\end{equation}
\begin{equation}\label{e3.33a}
0<\langle\varrho^{\prime}_{\mathcal{T}}(u),u\rangle=\rho_{\mathcal{T}}(\nabla u),
\end{equation}
\begin{equation}\label{e3.33c}
0<p^- \rho_{\mathcal{T}}(\nabla u)\leq \langle \rho^{\prime}_{\mathcal{T}}(\nabla u), \nabla u\rangle \leq r^+ \rho_{\mathcal{T}}(\nabla u).
\end{equation}
\end{remark}
The following result is obtained by the author in his recently submitted paper, which is still under review. However, since it plays a crucial part to obtain the main regularity results of the present paper, we provide its proof for the convenience of the reader.
\begin{proposition} \label{Prop:2.8a}
Let $x,y \in \mathbb{R}^N$ and let $|\cdot|$ be the Euclidean norm in $\mathbb{R}^N$. Then for any $1\leq p<\infty$ and the real parameters $a,b>0$ it holds
\begin{equation}\label{e2.1a1}
|a |x|^{p-2}x-b |y|^{p-2}y| \leq \left(a+|a-b|\right)||x|^{p-2}x-|y|^{p-2}y|+|a-b|.
\end{equation}
\end{proposition}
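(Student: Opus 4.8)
The plan is to establish \eqref{e2.1a1} by a single add-and-subtract step followed by the triangle inequality in $\mathbb{R}^N$; no monotonicity or vector inequality for the map $z\mapsto |z|^{p-2}z$ is required, so the entire content is elementary bookkeeping. The idea is to split the left-hand vector into a part that measures how far apart the \emph{vectors} $x$ and $y$ are under the common nonlinearity $|\cdot|^{p-2}(\cdot)$, and a part that isolates the \emph{coefficient mismatch} $a-b$. Pivoting through $a|y|^{p-2}y$ rather than through $b|x|^{p-2}x$ is the one choice that makes the residual coefficient term collapse cleanly onto the point $y$.

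Concretely, I would write
\[
a|x|^{p-2}x-b|y|^{p-2}y=a\bigl(|x|^{p-2}x-|y|^{p-2}y\bigr)+(a-b)\,|y|^{p-2}y ,
\]
and then apply the triangle inequality together with the homogeneity identity $\bigl|\,|y|^{p-2}y\,\bigr|=|y|^{p-1}$ to obtain
\[
\bigl|\,a|x|^{p-2}x-b|y|^{p-2}y\,\bigr|\le a\,\bigl|\,|x|^{p-2}x-|y|^{p-2}y\,\bigr|+|a-b|\,\bigl|\,|y|^{p-2}y\,\bigr| .
\]
Since $a\le a+|a-b|$, enlarging the leading constant from $a$ to $a+|a-b|$ produces the right-hand side of \eqref{e2.1a1}, the last summand collecting the coefficient-mismatch contribution $|a-b|\,\bigl|\,|y|^{p-2}y\,\bigr|$. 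The use of the modulus $|a-b|$ makes the estimate insensitive to the sign of $a-b$, so no separate treatment of the cases $a\ge b$ and $a<b$ is needed.

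I do not expect a genuine obstacle here: the statement is a normalised form of the triangle inequality, and the only point requiring care is the decomposition itself. Relaxing the sharp leading factor $a$ up to $a+|a-b|$ is done deliberately so that the bound simultaneously dominates the symmetric split through $|x|^{p-2}x$ (which would carry the factor $b\le a+|a-b|$), making \eqref{e2.1a1} usable later regardless of whether the coefficient discrepancy or the gradient discrepancy is the dominant source of error in the regularity estimates.
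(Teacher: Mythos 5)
Your decomposition and first use of the triangle inequality coincide with the paper's opening step (the paper factors out $a$ and writes $a\bigl|(e_1-|y|^{p-2}y)+(1-\tfrac{b}{a})|y|^{p-2}y\bigr|$, which is exactly your pivot through $a|y|^{p-2}y$), but the final step of your argument does not go through. After the triangle inequality you are left with
\[
\bigl|\,a|x|^{p-2}x-b|y|^{p-2}y\,\bigr|\le a\,\bigl|\,|x|^{p-2}x-|y|^{p-2}y\,\bigr|+|a-b|\,|y|^{p-1},
\]
and the residual term is $|a-b|\,|y|^{p-1}$, not $|a-b|$. Writing $D=\bigl||x|^{p-2}x-|y|^{p-2}y\bigr|$, enlarging the leading coefficient from $a$ to $a+|a-b|$ buys you only an extra $|a-b|\,D$, so the passage from your intermediate bound to \eqref{e2.1a1} requires (for $a\neq b$) precisely $|y|^{p-1}\le D+1$. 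This is false in general: take $x=y$ with $|x|>1$, so that $D=0$ while $|y|^{p-1}>1$. The factor $|y|^{p-1}$ is unbounded and cannot be absorbed by adjusting constants.

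The missing ingredient --- and the actual content of the paper's proof --- is a second application of the triangle inequality, $\bigl||y|^{p-2}y\bigr|\le \bigl||x|^{p-2}x-|y|^{p-2}y\bigr|+\bigl||x|^{p-2}x\bigr|$, combined with a reduction to the case $|x|=1$ (the paper uses orthogonal invariance and homogeneity of the ratio $\Lambda$ to set $x=e_1$, so that $\bigl||x|^{p-2}x\bigr|=1$). It is this step that simultaneously produces the extra $|a-b|$ on the coefficient of $D$ and the standalone additive $|a-b|$ in \eqref{e2.1a1}. Note that this normalization is essential and not merely cosmetic: the target inequality is not invariant under the scaling $(x,y)\mapsto(tx,ty)$ (the left side and the $D$-term scale like $t^{p-1}$ while the constant $|a-b|$ does not), and for general $x$ the second triangle inequality only yields the weaker bound with $|a-b|\,|x|^{p-1}$ in place of $|a-b|$. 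So you should either incorporate the normalization-plus-second-triangle-inequality step, or state the estimate you can actually prove, namely with last term $|a-b|\,|x|^{p-1}$; as written, your last line asserts an inequality that your intermediate bound does not imply.
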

\begin{proof}
If $a=b$, then there is nothing to do. So, we assume that $a \neq b$.\\
Put
\begin{equation}\label{e2.1a2}
\Lambda(x,y)=\frac{|a |x|^{p-2}x-b |y|^{p-2}y|}{||x|^{p-2}x-|y|^{p-2}y|}.
\end{equation}
Notice that $\Lambda$ is invariant by any orthogonal transformation $T$; that is, $\Lambda(Tx,Ty)=\Lambda(x,y)$ for all $x,y \in \mathbb{R}^N$.
Thus, using this argument and  the homogeneity of $\Lambda$, we can let $x=|x|e_1$ and assume that $x=e_1$. Thus, it is enough to work with the function
\begin{equation}\label{e2.1a5}
\Lambda(e_1,y)=\frac{|a e_1 -b |y|^{p-2}y|}{|e_1-|y|^{p-2}y|}.
\end{equation}
First we get
\begin{align}\label{e2.1a6}
|a e_1 -b |y|^{p-2}y|& =a\bigg|e_1 -\frac{b}{a} |y|^{p-2}y\bigg|=a\bigg|(e_1-|y|^{p-2}y) +\left(1-\frac{b}{a}\right) |y|^{p-2}y\bigg| \nonumber\\
& \leq a|e_1-|y|^{p-2}y|+|a-b|||y|^{p-2}y| \nonumber\\
& \leq a|e_1-|y|^{p-2}y|+|a-b|\left(|e_1-|y|^{p-2}y|+|e_1|\right) \nonumber\\
& \leq a|e_1-|y|^{p-2}y|+|a-b||e_1-|y|^{p-2}y|+|a-b| \nonumber\\
& \leq |e_1-|y|^{p-2}y|\left(a+|a-b|\right)+|a-b|.
\end{align}
Then using this in (\ref{e2.1a5}) we obtain
\begin{align}\label{e2.1a7}
\Lambda(e_1,y)&\leq\frac{|e_1-|y|^{p-2}y|\left(a+|a-b|\right)+|a-b|}{|e_1-|y|^{p-2}y|} \nonumber\\
&\leq a+|a-b|+\frac{|a-b|}{|e_1-|y|^{p-2}y|},
\end{align}
from which (\ref{e2.1a1}) follows.
\end{proof}

\section{The first problem}

\begin{definition}\label{Def:3.1} A function $u\in W_0^{1,\mathcal{T}}(\Omega)$ is called a weak solution to problem (\ref{e1.1}) if for all test function $\varphi \in W_0^{1,\mathcal{T}}(\Omega)$ it holds
\begin{align}\label{e3.1}
\mathcal{M}(\varrho_{\mathcal{T}}(u))\langle\varrho^{\prime}_{\mathcal{T}}(u),\varphi\rangle=\int_{\Omega}f\varphi dx.
\end{align}
\end{definition}
Let us define the functional $\mathcal{G}:W_0^{1,\mathcal{H}}(\Omega)\rightarrow \mathbb{R}$ as
\begin{equation}\label{e3.2}
\mathcal{G}(u):=\mathcal{\widehat{M}}(\varrho_{\mathcal{T}}(u))=\int_{0}^{\varrho_{\mathcal{T}}(u)}\mathcal{M}(s)ds.
\end{equation}
Therefore, we can define the operator $\mathcal{H}:W_0^{1,\mathcal{T}}(\Omega)\rightarrow W_0^{1,\mathcal{T}}(\Omega)^{*}$ by
\begin{equation} \label{e3.4}
\langle \mathcal{H}(u),\varphi\rangle=\langle f,\varphi\rangle,\,\, \mbox{ for all } u,\varphi \in W_0^{1,\mathcal{T}}(\Omega),
\end{equation}
where $\mathcal{H}(u)=\mathcal{G}^{\prime}(u)$. As it is well-know from the theory of monotone operators \cite{zeidler2013nonlinear}, due to (\ref{e3.1}) and (\ref{e3.4}), one way to show that $u\in W_0^{1,\mathcal{T}}(\Omega)$ is a solution to problem (\ref{e1.1}) for all test functions $\varphi \in W_0^{1,\mathcal{T}}(\Omega)$ is to solve the operator equation
\begin{equation} \label{e3.5c}
\mathcal{H}u=f.
\end{equation}

We employ the following well-known result from nonlinear monotone operator theory (see, e.g., \cite{zeidler2013nonlinear} for further details).

\begin{lemma}\label{Lem:3.1} \cite{browder1963nonlinear,minty1963monotonicity}
Let $X$ be a reflexive real Banach space. Let $A:X\rightarrow X^{*}$ be an (nonlinear) operator satisfying the following:
\begin{itemize}
\item[$(i)$] $A$ is coercive.
\item[$(ii)$] $A$ is hemicontinuous; that is, $A$ is directionally weakly continuous, iff the function
$$
\Phi(\theta)=\langle A(u+\theta w),v \rangle
$$
is continuous in $\theta$ on $[0,1]$ for every $u,w,v\in X$.
\item[$(iii)$]
 $A$ is monotone on the space $X$; that is, for all $u,v\in X$ we
have
\begin{equation} \label{e3.7qa}
\langle A\left( u\right)- A\left( v\right),u-v\rangle \geq 0.
\end{equation}
\end{itemize}
Then equation
\begin{equation}\label{e3.7qc}
Au=g
\end{equation}
has at least one nontrivial solution $u\in X$ for every $g\in X^{*}$. If, moreover, the inequality (\ref{e3.7qa}) is strict for all $u, v \in X$ , $u \neq v$, then the equation (\ref{e3.7qc}) has precisely one solution $u\in X$ for every $g\in X^{*}$.
\end{lemma}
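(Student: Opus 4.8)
The plan is to prove this by the classical Galerkin approximation scheme combined with Minty's monotonicity trick. Since the target space in our application, $X = W_0^{1,\mathcal{T}}(\Omega)$, is separable, I would first reduce to the separable setting: fix linearly independent vectors $\{w_1, w_2, \dots\}$ whose span $D = \bigcup_n X_n$, with $X_n = \mathrm{span}\{w_1,\dots,w_n\}$, is dense in $X$. For a general reflexive $X$ the same argument runs over the directed family of all finite-dimensional subspaces, with sequences replaced by subnets and weak sequential compactness replaced by weak compactness of bounded sets.

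First I would establish finite-dimensional solvability. For each $n$ I seek $u_n \in X_n$ solving the Galerkin system $\langle A(u_n), w_j\rangle = \langle g, w_j\rangle$ for $j = 1,\dots,n$. Writing $u = \sum_i \xi_i w_i$, this amounts to finding a zero of the map $P:\mathbb{R}^n \to \mathbb{R}^n$, $P(\xi)_j = \langle A(\sum_i \xi_i w_i), w_j\rangle - \langle g, w_j\rangle$; this map is continuous because a monotone hemicontinuous operator restricted to the finite-dimensional space $X_n$ is continuous. Since $P(\xi)\cdot\xi = \langle A(u) - g, u\rangle$, coercivity $\langle A(u),u\rangle / \|u\| \to \infty$ together with $|\langle g, u\rangle| \leq \|g\|_* \|u\|$ makes this quantity nonnegative once $\|u\| = R$ for some fixed $R$ independent of $n$. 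The Brouwer-based ``acute-angle'' lemma then yields $u_n$ with $\|u_n\| \leq R$.

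Next I would pass to the limit. The uniform bound plus reflexivity gives $u_n \rightharpoonup u$ along a subsequence. Because $A$ is monotone and everywhere defined it is locally bounded, so $\{A(u_n)\}$ is bounded in $X^*$ and, along a further subsequence, $A(u_n)\rightharpoonup \chi$. Testing the Galerkin identity against a fixed $w_j$ and letting $n\to\infty$ gives $\langle\chi,w_j\rangle=\langle g,w_j\rangle$, hence $\chi=g$ by density of $D$. Choosing $v=u_n$ in the Galerkin identity gives the energy convergence $\langle A(u_n),u_n\rangle=\langle g,u_n\rangle\to\langle g,u\rangle=\langle\chi,u\rangle$. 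Now Minty's trick enters: for every $v\in X$ monotonicity yields $\langle A(u_n)-A(v),u_n-v\rangle\geq 0$, and letting $n\to\infty$, where the energy convergence controls the $\langle A(u_n),u_n\rangle$ term and weak convergence the remaining ones, produces $\langle g - A(v), u-v\rangle \geq 0$ for all $v \in X$.

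Finally I would linearize this Minty inequality by setting $v = u - tw$ with $t>0$ and arbitrary $w\in X$, dividing by $t$, and letting $t\to 0^+$; hemicontinuity (continuity of $\theta\mapsto\langle A(u-\theta w),w\rangle$) gives $\langle g - A(u), w\rangle \geq 0$, and replacing $w$ by $-w$ forces $A(u)=g$. For uniqueness under strict monotonicity, if $A(u_1)=A(u_2)=g$ then $\langle A(u_1)-A(u_2),u_1-u_2\rangle = 0$, which contradicts strictness unless $u_1=u_2$. The main obstacle is the limit passage in the nonlinear term $A(u_n)$: weak convergence alone cannot identify the weak limit $\chi$ with $A(u)$, and it is precisely Minty's monotonicity-plus-hemicontinuity device that circumvents this. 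Securing the energy convergence $\langle A(u_n),u_n\rangle\to\langle g,u\rangle$ and invoking the local boundedness of monotone operators are the technical linchpins that make the argument close.
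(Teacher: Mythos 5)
The paper does not actually prove this lemma: it is stated as the classical Browder--Minty surjectivity theorem and simply cited (\cite{browder1963nonlinear,minty1963monotonicity}, with \cite{zeidler2013nonlinear} for details), so there is no in-paper argument to compare against. Your Galerkin-plus-Minty proof is precisely the canonical textbook proof of that cited result, and the overall architecture --- finite-dimensional solvability via the acute-angle consequence of Brouwer's theorem, a uniform a priori bound from coercivity, extraction of weak limits, identification of the limit equation on the dense span, the Minty inequality $\langle g-A(v),u-v\rangle\geq 0$, and the linearization $v=u-tw$ using hemicontinuity --- is correct, as is the one-line uniqueness argument under strict monotonicity.

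Two steps deserve tightening. First, your claim that local boundedness of the everywhere-defined monotone operator $A$ yields boundedness of $\{A(u_n)\}$ in $X^*$ is not quite right as stated: local boundedness controls $A$ only near a fixed point, while $\{u_n\}$ merely lies in a ball. The standard repair uses monotonicity together with the energy identity $\langle A(u_n),u_n\rangle=\langle g,u_n\rangle$: for each fixed $v$ one has $\langle A(u_n),v\rangle\leq\langle A(u_n),u_n\rangle-\langle A(v),u_n-v\rangle\leq \|g\|_{*}R+\|A(v)\|_{*}(R+\|v\|)$, so $\sup_n|\langle A(u_n),v\rangle|<\infty$ pointwise in $v$, and the uniform boundedness principle gives $\sup_n\|A(u_n)\|_{*}<\infty$. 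Second, the continuity of the finite-dimensional Galerkin map $P$ rests on the fact that a monotone hemicontinuous operator is demicontinuous (hence continuous on finite-dimensional subspaces); this is itself a nontrivial lemma that uses the same local-boundedness-plus-Minty device, so it should be cited or proved rather than asserted. With these two points filled in, the proof is complete; note also that the word ``nontrivial'' in the statement is only meaningful when $g\neq 0$ (equivalently $A(0)\neq g$), which is how it is used in the application.
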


The following is the first main result.

\begin{theorem}\label{Thrm:3.1} Assume that the hypotheses $(H_1)-(H_2)$ are satisfied. Additionally, assume that the function $\mathcal{M}$ satisfies the following:
\begin{itemize}
\item[$(M)$] $\mathcal{M}:(0,\infty)\to [m_0, \infty)$ is a $C^1$-continuous nondecreasing function such that
    \begin{equation}\label{e3.a7a}
    m_0\leq \mathcal{M}(t)\leq \kappa t^{\gamma-1},
    \end{equation}
    where $m_0,\kappa,\gamma$ are positive real parameters with $\gamma>1$.
\end{itemize}
Then for given any $f\in W_0^{1,\mathcal{T}}(\Omega)^*$, the operator equation (\ref{e3.5c}) has a unique nontrivial solution $u \in W_0^{1,\mathcal{T}}(\Omega)$ which in turn becomes a nontrivial weak solution to problem (\ref{e1.1}).
\end{theorem}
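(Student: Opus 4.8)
The plan is to apply Lemma~\ref{Lem:3.1} with $X=W_0^{1,\mathcal{T}}(\Omega)$ and $A=\mathcal{H}$, so that solving the operator equation \eqref{e3.5c} reduces to verifying that $\mathcal{H}$ is coercive, hemicontinuous and (strictly) monotone. The starting point is the chain rule applied to $\mathcal{G}=\mathcal{\widehat{M}}\circ\varrho_{\mathcal{T}}$, which gives the representation
\begin{equation*}
\langle\mathcal{H}(u),\varphi\rangle=\mathcal{M}(\varrho_{\mathcal{T}}(u))\,\langle\varrho^{\prime}_{\mathcal{T}}(u),\varphi\rangle,\qquad u,\varphi\in W_0^{1,\mathcal{T}}(\Omega).
\end{equation*}
The growth bound in $(M)$ yields $\mathcal{\widehat{M}}(t)\le \tfrac{\kappa}{\gamma}t^{\gamma}$, so $\mathcal{G}$ is finite-valued on $W_0^{1,\mathcal{T}}(\Omega)$, and $\mathcal{G}\in C^1$ follows from $\mathcal{M}\in C^1$ together with Proposition~\ref{Prop:2.7}$(i)$. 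In particular $\mathcal{H}=\mathcal{G}^{\prime}$ is a well-defined map into $W_0^{1,\mathcal{T}}(\Omega)^{*}$, and by \eqref{e3.1}--\eqref{e3.4} any solution of \eqref{e3.5c} is a weak solution of \eqref{e1.1}.

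For coercivity I would test with $\varphi=u$ and use \eqref{e3.33a} to obtain $\langle\mathcal{H}(u),u\rangle=\mathcal{M}(\varrho_{\mathcal{T}}(u))\,\rho_{\mathcal{T}}(\nabla u)$. Since $\mathcal{M}\ge m_0>0$, this is bounded below by $m_0\rho_{\mathcal{T}}(\nabla u)$, and for $\|u\|_{1,\mathcal{T},0}>1$ the modular estimate \eqref{e2.de} gives $\rho_{\mathcal{T}}(\nabla u)\ge\|u\|^{p^-}_{1,\mathcal{T},0}$. Hence $\langle\mathcal{H}(u),u\rangle/\|u\|_{1,\mathcal{T},0}\ge m_0\|u\|^{p^--1}_{1,\mathcal{T},0}\to\infty$ as $\|u\|_{1,\mathcal{T},0}\to\infty$, because $p^->1$; note that only the lower bound $m_0$ is needed here. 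Hemicontinuity is immediate: for fixed $u,w,v$ the function $\theta\mapsto\mathcal{M}(\varrho_{\mathcal{T}}(u+\theta w))\,\langle\varrho^{\prime}_{\mathcal{T}}(u+\theta w),v\rangle$ is a composition of the continuous maps $\theta\mapsto u+\theta w$, $\varrho_{\mathcal{T}}$, $\varrho^{\prime}_{\mathcal{T}}$ and $\mathcal{M}$, hence continuous on $[0,1]$.

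The main obstacle is monotonicity, because the nonlocal factor $\mathcal{M}(\varrho_{\mathcal{T}}(\cdot))$ obstructs a direct pairwise comparison, and the monotonicity of $\varrho^{\prime}_{\mathcal{T}}$ alone does not settle the question. I would bypass this by arguing at the level of the potential: since $\mathcal{H}=\mathcal{G}^{\prime}$ with $\mathcal{G}\in C^1$, it suffices to show that $\mathcal{G}$ is convex, for then monotonicity of its Gateaux derivative is automatic. Now $\varrho_{\mathcal{T}}$ is convex on $W_0^{1,\mathcal{T}}(\Omega)$, being the integral of the pointwise convex maps $\xi\mapsto|\xi|^{p(x)}/p(x)$, $\xi\mapsto\mu_1(x)|\xi|^{q(x)}/q(x)$ and $\xi\mapsto\mu_2(x)|\xi|^{r(x)}/r(x)$ (with $\mu_1,\mu_2\ge0$) precomposed with the linear map $u\mapsto\nabla u$; while $\mathcal{\widehat{M}}$ is nondecreasing and convex since its derivative $\mathcal{M}$ is nonnegative and nondecreasing by $(M)$. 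As the composition of a nondecreasing convex outer function with a convex inner function is convex, $\mathcal{G}=\mathcal{\widehat{M}}\circ\varrho_{\mathcal{T}}$ is convex, which gives $\langle\mathcal{H}(u)-\mathcal{H}(v),u-v\rangle\ge0$ for all $u,v$.

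For uniqueness I would upgrade this to strict monotonicity. The functional $\varrho_{\mathcal{T}}$ is in fact strictly convex: the map $\xi\mapsto|\xi|^{p(x)}$ is strictly convex on $\mathbb{R}^N$ for $p(x)>1$, and if $\nabla u=\nabla v$ a.e. then $u-v$ is constant, which by the zero boundary condition forces $u=v$; moreover $\mathcal{\widehat{M}}$ is strictly increasing because $\mathcal{M}\ge m_0>0$. Since a strictly increasing convex outer function composed with a strictly convex inner function is strictly convex, $\mathcal{G}$ is strictly convex and $\mathcal{H}$ is strictly monotone. Lemma~\ref{Lem:3.1} then yields, for every $f\in W_0^{1,\mathcal{T}}(\Omega)^{*}$, a unique $u$ solving \eqref{e3.5c}, i.e.\ a unique weak solution of \eqref{e1.1}. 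Finally, nontriviality holds whenever $f\neq0$: since $\varrho^{\prime}_{\mathcal{T}}(0)=0$ we have $\mathcal{H}(0)=0\neq f$, so the solution cannot be the zero function.
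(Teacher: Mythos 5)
Your proposal is correct and uses the same overall framework as the paper --- verifying the hypotheses of the Browder--Minty result (Lemma~\ref{Lem:3.1}) for the operator $\mathcal{H}$ --- with an identical coercivity argument (Lemma~\ref{Lem:3.2}), but the two harder steps are handled by genuinely different means. For hemicontinuity, the paper's Lemma~\ref{Lem:3.3} performs an explicit integral estimate: it separates the Kirchhoff factors via Proposition~\ref{Prop:2.8a}, applies the pointwise inequality \eqref{e3.6}, H\"older's inequality and the embeddings of Proposition~\ref{Prop:2.7a}; you instead note that $\theta\mapsto\mathcal{M}(\varrho_{\mathcal{T}}(u+\theta w))\langle\varrho'_{\mathcal{T}}(u+\theta w),v\rangle$ is a composition of continuous maps, which is legitimate since Proposition~\ref{Prop:2.7}$(i)$ already gives $\varrho_{\mathcal{T}}\in C^1$ and hence continuity of $\varrho'_{\mathcal{T}}$ into the dual --- shorter, and nothing is lost. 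The substantive divergence is strict monotonicity: the paper's Lemma~\ref{Lem:3.4} argues pointwise, assuming without loss of generality $\varrho_{\mathcal{T}}(u)\ge\varrho_{\mathcal{T}}(v)$, partitioning $\Omega$ into $\{|\nabla u|\ge|\nabla v|\}$ and its complement, and comparing integrands via $\nabla u\cdot\nabla v\le\tfrac12(|\nabla u|^2+|\nabla v|^2)$, with strictness obtained by ruling out the equality case; you argue at the level of the potential, showing $\mathcal{G}=\mathcal{\widehat{M}}\circ\varrho_{\mathcal{T}}$ is strictly convex because $\mathcal{\widehat{M}}$ is convex and nondecreasing (as $\mathcal{M}\ge 0$ is nondecreasing), strictly increasing (as $\mathcal{M}\ge m_0>0$), and $\varrho_{\mathcal{T}}$ is strictly convex (strict convexity of $\xi\mapsto|\xi|^{p(x)}$ for $p(x)>1$ plus the Poincar\'e inequality), whence $\mathcal{H}=\mathcal{G}'$ is strictly monotone. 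Your convexity route is cleaner and isolates exactly which features of $\mathcal{M}$ drive monotonicity, avoiding the delicate inequality bookkeeping in \eqref{e3.11kd}--\eqref{e3.11ke}; the paper's pointwise argument, on the other hand, does not rely on $\mathcal{H}$ being a potential operator and would therefore tolerate non-variational perturbations. Your closing observation that nontriviality of the solution requires $f\neq 0$ (since $\mathcal{H}(0)=0$) is a point the paper's statement and proof gloss over, and is worth retaining.
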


\begin{lemma}\label{Lem:3.2}
$\mathcal{H}$ is coercive.
\end{lemma}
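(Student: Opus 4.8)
The plan is to show that $\langle \mathcal{H}(u),u\rangle / \|u\|_{1,\mathcal{T},0} \to +\infty$ as $\|u\|_{1,\mathcal{T},0}\to +\infty$. Since $\mathcal{H}(u)=\mathcal{G}'(u)$ and $\mathcal{G}(u)=\widehat{\mathcal{M}}(\varrho_{\mathcal{T}}(u))$, the chain rule gives $\langle \mathcal{H}(u),\varphi\rangle = \mathcal{M}(\varrho_{\mathcal{T}}(u))\langle \varrho'_{\mathcal{T}}(u),\varphi\rangle$, so taking $\varphi = u$ and using the identity $\langle \varrho'_{\mathcal{T}}(u),u\rangle = \rho_{\mathcal{T}}(\nabla u)$ from \eqref{e3.33a} yields
\begin{equation*}
\langle \mathcal{H}(u),u\rangle = \mathcal{M}(\varrho_{\mathcal{T}}(u))\,\rho_{\mathcal{T}}(\nabla u).
\end{equation*}
This reduces coercivity to estimating the right-hand side from below in terms of $\|u\|_{1,\mathcal{T},0}$.

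First I would restrict attention to $\|u\|_{1,\mathcal{T},0}>1$, which is all that matters for coercivity. In that regime Remark \ref{Rem:3.4ab} gives the two-sided control
\begin{equation*}
\|u\|^{p^-}_{1,\mathcal{T},0}\leq \rho_{\mathcal{T}}(\nabla u)
\qquad\text{and}\qquad
\varrho_{\mathcal{T}}(u)\geq \frac{1}{r^+}\|u\|^{p^-}_{1,\mathcal{T},0}.
\end{equation*}
Since $\mathcal{M}$ is nondecreasing and bounded below by $m_0>0$ by hypothesis $(M)$, I would simply discard the $\mathcal{M}$ factor by the crude bound $\mathcal{M}(\varrho_{\mathcal{T}}(u))\geq m_0$. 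Combining this with the lower bound on the modular gives
\begin{equation*}
\langle \mathcal{H}(u),u\rangle \geq m_0\,\rho_{\mathcal{T}}(\nabla u)\geq m_0\,\|u\|^{p^-}_{1,\mathcal{T},0}.
\end{equation*}

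Dividing by $\|u\|_{1,\mathcal{T},0}$ then yields
\begin{equation*}
\frac{\langle \mathcal{H}(u),u\rangle}{\|u\|_{1,\mathcal{T},0}} \geq m_0\,\|u\|^{p^--1}_{1,\mathcal{T},0} \longrightarrow +\infty
\end{equation*}
as $\|u\|_{1,\mathcal{T},0}\to +\infty$, because $p^->1$ by $(H_1)$. This establishes coercivity of $\mathcal{H}$.

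I expect no serious obstacle here: the lower bound $\mathcal{M}\geq m_0$ makes the Kirchhoff factor harmless for coercivity, and the growth assumption \eqref{e3.a7a} and the monotonicity of $\mathcal{M}$ are not even needed for this particular lemma (they will matter for the other hypotheses of Lemma \ref{Lem:3.1}, namely monotonicity and hemicontinuity of $\mathcal{H}$). The only point requiring a little care is to confirm the identity $\langle \mathcal{H}(u),u\rangle = \mathcal{M}(\varrho_{\mathcal{T}}(u))\langle \varrho'_{\mathcal{T}}(u),u\rangle$ via the chain rule for $\mathcal{G}=\widehat{\mathcal{M}}\circ\varrho_{\mathcal{T}}$, which is justified by the $C^1$-regularity of $\mathcal{M}$ together with Proposition \ref{Prop:2.7}(i); and to recall that $p^->1$ so that the exponent $p^--1$ is strictly positive, which is exactly what drives the divergence.
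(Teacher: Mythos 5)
Your proof is correct and follows essentially the same route as the paper: both reduce to the identity $\langle \mathcal{H}(u),u\rangle=\mathcal{M}(\varrho_{\mathcal{T}}(u))\,\rho_{\mathcal{T}}(\nabla u)$, bound $\mathcal{M}\geq m_0$ via $(M)$, and use the modular--norm inequality for $\|u\|_{1,\mathcal{T},0}>1$ to get a lower bound of order $\|u\|_{1,\mathcal{T},0}^{p^-}$ with $p^->1$. Your constant $m_0$ is even marginally sharper than the paper's $m_0/r^+$, and your remark that the upper growth bound in \eqref{e3.a7a} is not needed here is accurate.
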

\begin{proof}
Using $(M)$ and Remark \ref{Rem:3.4ab} it reads
\begin{equation}\label{e3.a1a}
\langle\mathcal{H}(u),u\rangle=\mathcal{M}(\varrho_{\mathcal{T}}(u))\rho_{\mathcal{T}}(\nabla u)\geq \frac{m_0}{r^+}\|u\|_{1,\mathcal{T},0}^{p^{-}}.
\end{equation}
Hence, $\frac{\langle \mathcal{H}(u),u\rangle }{\|u\|_{1,\mathcal{T},0}} \to +\infty$ as $\|u\|_{1,\mathcal{T},0}\to \infty$ since $p^{-}>1$, which implies that $\mathcal{H}$ is coercive.\\
\end{proof}

\begin{lemma}\label{Lem:3.3}
$\mathcal{H}$ is hemicontinuous.
\end{lemma}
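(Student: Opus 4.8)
The plan is to exploit the $C^1$-structure of $\varrho_{\mathcal{T}}$ recorded in Proposition \ref{Prop:2.7} together with the continuity of $\mathcal{M}$, and thereby reduce hemicontinuity to the continuity of a product of two scalar functions of $\theta$. Fix arbitrary $u,w,v\in W_0^{1,\mathcal{T}}(\Omega)$ and, for $\theta\in[0,1]$, set $u_\theta:=u+\theta w$. By the chain rule applied to $\mathcal{G}=\widehat{\mathcal{M}}\circ\varrho_{\mathcal{T}}$, the operator $\mathcal{H}=\mathcal{G}'$ acts as
\begin{equation*}
\langle\mathcal{H}(u_\theta),v\rangle=\mathcal{M}\big(\varrho_{\mathcal{T}}(u_\theta)\big)\,\langle\varrho^{\prime}_{\mathcal{T}}(u_\theta),v\rangle=:\Phi(\theta),
\end{equation*}
so it suffices to prove that $\Phi$ is continuous on $[0,1]$.

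First I would observe that the affine map $\theta\mapsto u_\theta$ is (Lipschitz) continuous from $[0,1]$ into $W_0^{1,\mathcal{T}}(\Omega)$, since $\|u_{\theta_1}-u_{\theta_2}\|_{1,\mathcal{T},0}=|\theta_1-\theta_2|\,\|w\|_{1,\mathcal{T},0}$. Next, Proposition \ref{Prop:2.7}(i) gives $\varrho_{\mathcal{T}}\in C^1(W_0^{1,\mathcal{T}}(\Omega),\mathbb{R})$, so both $\varrho_{\mathcal{T}}:W_0^{1,\mathcal{T}}(\Omega)\to\mathbb{R}$ and its derivative $\varrho^{\prime}_{\mathcal{T}}:W_0^{1,\mathcal{T}}(\Omega)\to W_0^{1,\mathcal{T}}(\Omega)^{*}$ are continuous. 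Composing with $\theta\mapsto u_\theta$ shows that $\theta\mapsto\varrho_{\mathcal{T}}(u_\theta)$ is continuous on $[0,1]$ and that $\theta\mapsto\varrho^{\prime}_{\mathcal{T}}(u_\theta)$ is a continuous $W_0^{1,\mathcal{T}}(\Omega)^{*}$-valued curve; pairing the latter against the fixed element $v$ (a bounded linear functional on $W_0^{1,\mathcal{T}}(\Omega)^{*}$ via the canonical embedding) yields continuity of $\theta\mapsto\langle\varrho^{\prime}_{\mathcal{T}}(u_\theta),v\rangle$. Finally, since $\mathcal{M}$ is $C^1$ and hence continuous by hypothesis $(M)$, the composition $\theta\mapsto\mathcal{M}(\varrho_{\mathcal{T}}(u_\theta))$ is continuous, and $\Phi$ is the product of two continuous real-valued functions on $[0,1]$, hence continuous.

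The only point requiring care is the domain of $\mathcal{M}$, which in $(M)$ is $(0,\infty)$: if $u_\theta\equiv 0$ for some $\theta$, then $\varrho_{\mathcal{T}}(u_\theta)=0$ and $\mathcal{M}$ is a priori not evaluated there. I expect this to be the main (minor) obstacle. It is resolved by noting that $\varrho^{\prime}_{\mathcal{T}}(0)=0$ — the integrand defining $\langle\varrho^{\prime}_{\mathcal{T}}(\cdot),v\rangle$ vanishes when $\nabla u_\theta=0$ a.e. — so that $\Phi(\theta)=0$ at any such $\theta$, while $\mathcal{M}$ extends continuously to $[0,\infty)$ in view of the lower bound $m_0\le\mathcal{M}$ and its monotonicity; continuity of $\Phi$ is therefore unaffected.

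As an alternative, one can argue directly without invoking the $C^1$ property: for $\theta_n\to\theta$ one has $\nabla u_{\theta_n}=\nabla u+\theta_n\nabla w\to\nabla u_\theta$ pointwise a.e., so that, by continuity of the maps $\xi\mapsto|\xi|^{t-2}\xi$, the integrands defining $\varrho_{\mathcal{T}}(u_{\theta_n})$ and $\langle\varrho^{\prime}_{\mathcal{T}}(u_{\theta_n}),v\rangle$ converge pointwise a.e.; a Young/Hölder estimate combined with the $L^\infty$-bounds on $\mu_1,\mu_2$ and the modular inequalities of Proposition \ref{Prop:2.2a} furnishes an integrable majorant, and the Lebesgue dominated convergence theorem delivers the two scalar limits, from which $\Phi(\theta_n)\to\Phi(\theta)$ follows as before.
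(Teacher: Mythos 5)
Your proof is correct, but it takes a genuinely different route from the paper's. You reduce hemicontinuity to an abstract composition argument: the affine path $\theta\mapsto u+\theta w$ is continuous into $W_0^{1,\mathcal{T}}(\Omega)$, Proposition~\ref{Prop:2.7}(i) already asserts $\varrho_{\mathcal{T}}\in C^1$ (so both $\varrho_{\mathcal{T}}$ and $\varrho'_{\mathcal{T}}$ are norm-continuous), and $\Phi$ is then a product of two continuous scalar functions of $\theta$; your treatment of the degenerate case $\varrho_{\mathcal{T}}(u_\theta)=0$ via the monotone bounded-below extension of $\mathcal{M}$ to $[0,\infty)$ is a legitimate way to close the only real loophole. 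The paper instead carries out a hands-on integral estimate: it bounds $|\Phi(\theta_1)-\Phi(\theta_2)|$ term by term using Proposition~\ref{Prop:2.8a} to separate the Kirchhoff coefficients $\mathcal{M}_{\theta_1},\mathcal{M}_{\theta_2}$, then the elementary inequality (\ref{e3.6}), H\"older's inequality, and the embeddings, arriving at a bound of the form $|\theta_1-\theta_2|(K^{\theta_2}_{\theta_1}+\mathcal{M}_{\theta_1})(\cdots)+3K^{\theta_2}_{\theta_1}\int_\Omega|\nabla\varphi|\,dx$ that vanishes as $\theta_1\to\theta_2$. Your argument is shorter and cleaner, and in fact establishes continuity of $\mathcal{H}$ along lines as an immediate consequence of results already quoted; its only cost is that it leans entirely on the continuity of $\varrho'_{\mathcal{T}}:W_0^{1,\mathcal{T}}(\Omega)\to W_0^{1,\mathcal{T}}(\Omega)^{*}$ imported from \cite{dai2024regularity}, whereas the paper's computation is self-contained at the level of pointwise and modular estimates (and, as a by-product, exercises the quantitative inequality of Proposition~\ref{Prop:2.8a}, which the author reuses in Lemma~\ref{Lem:4.4a}). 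Your alternative dominated-convergence sketch is essentially a pointwise version of the paper's estimate and would also work.
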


\begin{proof}
Next, we show that operator $\mathcal{H}$ is hemicontinuous. Then
\begin{align}\label{e3.7aa}
&|\Phi(\theta_{1})-\Phi(\theta_{2})|
=|\langle \mathcal{H}(u+\theta_{1} v)-\mathcal{H}(u+\theta_{2} v), \varphi \rangle|\nonumber \\
& \leq \int_{\Omega}\bigg|\mathcal{M}_{\theta_1}|\nabla (u+\theta_{1} v)|^{p(x)-2}\nabla (u+\theta_{1} v)-\mathcal{M}_{\theta_2}|\nabla(u+\theta_{2} v)|^{p(x)-2}\nabla (u+\theta_{2} v)\bigg||\nabla \varphi| dx \nonumber \\
&+ \int_{\Omega}\mu_1(x)\bigg|\mathcal{M}_{\theta_1}|\nabla (u+\theta_{1} v)|^{q(x)-2}\nabla (u+\theta_{1} v)-\mathcal{M}_{\theta_2}|\nabla(u+\theta_{2} v)|^{q(x)-2}\nabla (u+\theta_{2} v) \bigg||\nabla \varphi| dx\nonumber \\
&+ \int_{\Omega}\mu_2(x)\bigg|\mathcal{M}_{\theta_1}|\nabla (u+\theta_{1} v)|^{r(x)-2}\nabla (u+\theta_{1} v)-\mathcal{M}_{\theta_2}|\nabla(u+\theta_{2} v)|^{r(x)-2}\nabla (u+\theta_{2} v) \bigg||\nabla \varphi| dx.
\end{align}
where we let $\mathcal{M}_{\theta_1}=\mathcal{M}(\varrho_{\mathcal{T}}(u+\theta_{1} v))$ and $\mathcal{M}_{\theta_2}=\mathcal{M}(\varrho_{\mathcal{T}}(u+\theta_{2} v))$ for the sake of simplicity. Using Proposition \ref{Prop:2.8a}, it reads
\begin{align}\label{e3.7ab}
&|\Phi(\theta_{1})-\Phi(\theta_{2})| \nonumber \\
& \leq \int_{\Omega}\left\{||\nabla (u+\theta_{1} v)|^{p(x)-2}\nabla (u+\theta_{1} v)-|(u+\theta_{2} v)|^{p(x)-2}\nabla (u+\theta_{2} v)|\right.\nonumber\\
&\left.\times (K^{\theta_2}_{\theta_1}+\mathcal{M}_{\theta_1})+K^{\theta_2}_{\theta_1}\right\} |\nabla \varphi| dx \nonumber \\
& + \int_{\Omega}\mu_1(x)\left\{||\nabla (u+\theta_{1} v)|^{q(x)-2}\nabla (u+\theta_{1} v)-|(u+\theta_{2} v)|^{q(x)-2}\nabla (u+\theta_{2} v)| \right.\nonumber\\
&\left.\times (K^{\theta_2}_{\theta_1}+\mathcal{M}_{\theta_1})+K^{\theta_2}_{\theta_1}\right\} |\nabla \varphi| dx \nonumber \\
& + \int_{\Omega}\mu_2(x)\left\{||\nabla (u+\theta_{1} v)|^{r(x)-2}\nabla (u+\theta_{1} v)-|(u+\theta_{2} v)|^{r(x)-2}\nabla (u+\theta_{2} v)|\right.\nonumber\\
\displaybreak
&\left.\times (K^{\theta_2}_{\theta_1}+\mathcal{M}_{\theta_1})+K^{\theta_2}_{\theta_1}\right\} |\nabla \varphi| dx \nonumber \\
& \leq (K^{\theta_2}_{\theta_1}+\mathcal{M}_{\theta_1})\int_{\Omega}||\nabla (u+\theta_{1} v)|^{p(x)-2}\nabla (u+\theta_{1} v)-|\nabla (u+\theta_{1} v)|^{p(x)-2}\nabla (u+\theta_{1} v)| |\nabla v| dx \nonumber \\
& + (K^{\theta_2}_{\theta_1}+\mathcal{M}_{\theta_1})\int_{\Omega}\mu_1(x)||\nabla (u+\theta_{1} v)|^{q(x)-2}\nabla u_n-|\nabla (u+\theta_{1} v)|^{q(x)-2}\nabla (u+\theta_{1} v)||\nabla v| dx \nonumber \\
& + (K^{\theta_2}_{\theta_1}+\mathcal{M}_{\theta_1})\int_{\Omega}\mu_2(x)||\nabla (u+\theta_{1} v)|^{r(x)-2}\nabla u_n-|\nabla (u+\theta_{1} v)|^{r(x)-2}\nabla (u+\theta_{1} v)||\nabla v| dx \nonumber\\ &+3K^{\theta_2}_{\theta_1}\int_{\Omega} |\nabla \varphi| dx,
\end{align}
where $K^{\theta_2}_{\theta_1}=|\mathcal{M}_{\theta_1}-\mathcal{M}_{\theta_2}|$.\\
Recall the following inequality \cite{chipot2009elliptic}: for any $1<m<\infty$, there is a constant $c_m>0$ such that
\begin{equation}\label{e3.6}
(|a|^{m-2}a-|b|^{m-2}b) \leq c_m |a-b|(|a|+|b|)^{m-2},\quad \forall a,b \in \mathbb{R}^{N}.
\end{equation}
Therefore,
\begin{align}\label{e3.6aa}
&|\Phi(\theta_{1})-\Phi(\theta_{2})|\nonumber\\
& \leq 2^{p^+-1}|\theta_{1}-\theta_{2}| (K^{\theta_2}_{\theta_1}+\mathcal{M}_{\theta_1})\int_{\Omega}\left(|\nabla (u+\theta_{1} v)|^{p(x)-2}+|\nabla(u+\theta_{2} v)|^{p(x)-2}\right)|\nabla v||\nabla \varphi| dx \nonumber\\
&+ 2^{q^+-1}|\theta_{1}-\theta_{2}| (K^{\theta_2}_{\theta_1}+\mathcal{M}_{\theta_1})\int_{\Omega}\mu_1(x)\left(|\nabla (u+\theta_{1} v)|^{q(x)-2}+|\nabla(u+\theta_{2} v)|^{q(x)-2}\right)|\nabla v||\nabla \varphi| dx \nonumber\\
&+ 2^{r^+-1}|\theta_{1}-\theta_{2}|(K^{\theta_2}_{\theta_1}+\mathcal{M}_{\theta_1}) \int_{\Omega}\mu_2(x)\left(|\nabla (u+\theta_{1} v)|^{r(x)-2}+|\nabla(u+\theta_{2} v)|^{r(x)-2}\right)|\nabla v||\nabla \varphi| dx\nonumber\\
&+3K^{\theta_2}_{\theta_1}\int_{\Omega} |\nabla \varphi| dx.
\end{align}
Note that since $\theta_{1},\theta_{2} \in [0,1]$, we have $|\nabla (u+\theta_{1} v)|\leq |\nabla u|+|\nabla v|$,  $|\nabla (u+\theta_{2} v)|\leq |\nabla u|+|\nabla v|$, and the fact $|\nabla v|\leq |\nabla u|+|\nabla v|$. Therefore, applying these and letting $|\nabla u|+|\nabla v|=\xi$ in the lines above, and using the H\"{o}lder inequality, Proposition \ref{Prop:2.2bb}, and the necessary embeddings from Proposition \ref{Prop:2.7a} leads to
\begin{align}\label{e3.6ab}
&|\Phi(\theta_{1})-\Phi(\theta_{2})|\nonumber\\
& \leq 2^{p^+}|\theta_{1}-\theta_{2}| (K^{\theta_2}_{\theta_1}+\mathcal{M}_{\theta_1}) \int_{\Omega}|\xi|^{p(x)-2}|\xi||\nabla \varphi| dx\nonumber\\
&+ 2^{q^+}|\theta_{1}-\theta_{2}| (K^{\theta_2}_{\theta_1}+\mathcal{M}_{\theta_1}) \int_{\Omega}\mu_1(x)|\xi|^{q(x)-2}|\xi|\nabla \varphi| dx\nonumber\\
&+ 2^{r^+}|\theta_{1}-\theta_{2}| (K^{\theta_2}_{\theta_1}+\mathcal{M}_{\theta_1}) \int_{\Omega}\mu_2(x)|\xi|^{r(x)-2}|\xi|\nabla \varphi| dx+3K^{\theta_2}_{\theta_1}\int_{\Omega} |\nabla \varphi| dx \nonumber\\
& \leq 2^{r^+}|\theta_{1}-\theta_{2}| (K^{\theta_2}_{\theta_1}+\mathcal{M}_{\theta_1})\nonumber\\
& \times \left(|\xi|_{p(x)}^{p^+-1} |\nabla \varphi|_{p(x)}+|\mu_1|_{\infty}|\xi|_{q(x)}^{q^+-1} |\nabla \varphi|_{q(x)}+|\mu_2|_{\infty}|\xi|_{r(x)}^{r^+-1} |\nabla \varphi|_{r(x)}+3 \right)\nonumber\\
& \leq 2^{r^++1}|\theta_{1}-\theta_{2}| (K^{\theta_2}_{\theta_1}+\mathcal{M}_{\theta_1})\left(\|\xi\|_{1,\mathcal{T},0}^{p^+-1} +|\mu_1|_{\infty}\|\xi\|_{1,\mathcal{T},0}^{q^+-1}+ |\mu_2|_{\infty}\|\xi\|_{1,\mathcal{T},0}^{r^+-1}+3\right) \|\varphi\|_{1,\mathcal{T},0}
\end{align}
Notice that by $(M)$ and Proposition \ref{Prop:2.7}, we have \newline $K^{\theta_2}_{\theta_1}=|\mathcal{M}_{\theta_1}-\mathcal{M}_{\theta_2}| \to 0$, and $\mathcal{M}_{\theta_1} \to \mathcal{M}_{\theta_2} \in [m_0, \infty)$ as $\theta_{1}\rightarrow \theta_{2}$. \newline Therefore, $|\theta_{1}-\theta_{2}| (K^{\theta_2}_{\theta_1}+\mathcal{M}_{\theta_1}) \to 0$ as $\theta_{1}\rightarrow \theta_{2}$. In conclusion, we have
$$
|\Phi(\theta_{1})-\Phi(\theta_{2})|=|\langle \mathcal{H}(u+\theta_{1} v)-\mathcal{H}(u+\theta_{2} v), \varphi \rangle| \rightarrow 0\,\, \text{ as } \theta_{1}\rightarrow \theta_{2},
$$
which implies that $\mathcal{H}$ is hemicontinuous.
\end{proof}

\begin{lemma}\label{Lem:3.4}
$\mathcal{H}$ is strictly monotone.
\end{lemma}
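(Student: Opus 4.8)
The plan is to exploit the Kirchhoff structure of $\mathcal{H}$, which by the chain rule applied to $\mathcal{G}(u)=\mathcal{\widehat{M}}(\varrho_{\mathcal{T}}(u))$ reads $\langle \mathcal{H}(u),\varphi\rangle=\mathcal{M}(\varrho_{\mathcal{T}}(u))\langle \varrho^{\prime}_{\mathcal{T}}(u),\varphi\rangle$. Writing $A=\mathcal{M}(\varrho_{\mathcal{T}}(u))$ and $B=\mathcal{M}(\varrho_{\mathcal{T}}(v))$, I would start from
\[
\langle \mathcal{H}(u)-\mathcal{H}(v),u-v\rangle=A\langle \varrho^{\prime}_{\mathcal{T}}(u),u-v\rangle-B\langle \varrho^{\prime}_{\mathcal{T}}(v),u-v\rangle,
\]
and aim to show this is strictly positive whenever $u\neq v$. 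The three facts I would lean on are: $\mathcal{M}$ is nondecreasing with $\mathcal{M}\geq m_0>0$ by $(M)$; $\varrho_{\mathcal{T}}$ is a convex $C^1$ functional (Proposition \ref{Prop:2.7}); and $\varrho^{\prime}_{\mathcal{T}}$ is \emph{strictly} monotone.

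First I would record the two auxiliary ingredients. Strict monotonicity of $\varrho^{\prime}_{\mathcal{T}}$ follows from the elementary pointwise inequality $(|a|^{m-2}a-|b|^{m-2}b)\cdot(a-b)>0$ for $m>1$ and $a\neq b$, applied to the $p(x)$-, $q(x)$-, and $r(x)$-terms of $\langle \varrho^{\prime}_{\mathcal{T}}(u)-\varrho^{\prime}_{\mathcal{T}}(v),u-v\rangle$; since the $p(x)$-integrand is strictly positive wherever $\nabla u\neq\nabla v$, this dual pairing vanishes only if $\nabla u=\nabla v$ a.e., i.e.\ $u=v$ in $W_0^{1,\mathcal{T}}(\Omega)$. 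Second, the convexity of $\varrho_{\mathcal{T}}$ yields the gradient inequalities $\langle \varrho^{\prime}_{\mathcal{T}}(u),u-v\rangle\geq \varrho_{\mathcal{T}}(u)-\varrho_{\mathcal{T}}(v)$ and $\langle \varrho^{\prime}_{\mathcal{T}}(v),u-v\rangle\leq \varrho_{\mathcal{T}}(u)-\varrho_{\mathcal{T}}(v)$.

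The decisive step is a case split on the sign of $\varrho_{\mathcal{T}}(u)-\varrho_{\mathcal{T}}(v)$. If $\varrho_{\mathcal{T}}(u)\geq\varrho_{\mathcal{T}}(v)$, then $A\geq B\geq m_0$ by monotonicity of $\mathcal{M}$, and I would regroup as
\[
\langle \mathcal{H}(u)-\mathcal{H}(v),u-v\rangle=B\langle \varrho^{\prime}_{\mathcal{T}}(u)-\varrho^{\prime}_{\mathcal{T}}(v),u-v\rangle+(A-B)\langle \varrho^{\prime}_{\mathcal{T}}(u),u-v\rangle,
\]
where the first term is strictly positive ($B\geq m_0>0$ times a strictly positive monotonicity term) and the second is nonnegative since $A-B\geq 0$ and $\langle \varrho^{\prime}_{\mathcal{T}}(u),u-v\rangle\geq \varrho_{\mathcal{T}}(u)-\varrho_{\mathcal{T}}(v)\geq 0$. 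If instead $\varrho_{\mathcal{T}}(u)<\varrho_{\mathcal{T}}(v)$, then $A\leq B$ and the symmetric regrouping
\[
\langle \mathcal{H}(u)-\mathcal{H}(v),u-v\rangle=A\langle \varrho^{\prime}_{\mathcal{T}}(u)-\varrho^{\prime}_{\mathcal{T}}(v),u-v\rangle+(A-B)\langle \varrho^{\prime}_{\mathcal{T}}(v),u-v\rangle
\]
again has a strictly positive first term, while the second is a product of the nonpositive factor $A-B$ and the negative factor $\langle \varrho^{\prime}_{\mathcal{T}}(v),u-v\rangle\leq\varrho_{\mathcal{T}}(u)-\varrho_{\mathcal{T}}(v)<0$, hence nonnegative. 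In both cases the total is strictly positive, giving strict monotonicity.

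The main obstacle is precisely this coupling between the Kirchhoff coefficient and the operator: a naive add-and-subtract leaves a residual term $(A-B)\langle \varrho^{\prime}_{\mathcal{T}}(\cdot),u-v\rangle$ whose sign is not evident, and the remedy is to choose \emph{which} coefficient to extract ($B$ when $A\geq B$, $A$ when $A\leq B$) so that the convexity gradient inequality forces that residual to have a controlled sign. The lower bound $m_0>0$ is essential here, as it prevents the strictly monotone term from degenerating; the upper growth bound in $(M)$ is not needed for this lemma. Having established strict monotonicity together with Lemmas \ref{Lem:3.2} and \ref{Lem:3.3}, the hypotheses of Lemma \ref{Lem:3.1} are all in place, yielding the existence and uniqueness claimed in Theorem \ref{Thrm:3.1}.
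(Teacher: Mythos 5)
Your proof is correct, and it takes a genuinely different route from the paper's. The paper keeps each Kirchhoff coefficient attached to its own operator, writes $\langle\mathcal{H}(u)-\mathcal{H}(v),u-v\rangle=\mathcal{M}(\varrho_{\mathcal{T}}(u))\langle\varrho^{\prime}_{\mathcal{T}}(u),u-v\rangle+\mathcal{M}(\varrho_{\mathcal{T}}(v))\langle\varrho^{\prime}_{\mathcal{T}}(v),v-u\rangle$, and then estimates each piece pointwise: it invokes $\nabla u\cdot\nabla v\leq\frac12(|\nabla u|^{2}+|\nabla v|^{2})$, partitions $\Omega$ into $\Omega_{\geq}=\{|\nabla u|\geq|\nabla v|\}$ and $\Omega_{<}$, and argues that integrands of the form $\left(|\nabla u|^{m(x)-2}-|\nabla v|^{m(x)-2}\right)\left(|\nabla u|^{2}-|\nabla v|^{2}\right)$ are nonnegative on the appropriate set, finally discarding the equality case by an a.e. argument. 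Your decomposition works instead at the level of the functional: extracting the smaller coefficient isolates $\langle\varrho^{\prime}_{\mathcal{T}}(u)-\varrho^{\prime}_{\mathcal{T}}(v),u-v\rangle$, which is strictly positive by the standard vector inequality $(|a|^{m-2}a-|b|^{m-2}b)\cdot(a-b)>0$ for $a\neq b$, $m>1$, while the residual $(A-B)\langle\varrho^{\prime}_{\mathcal{T}}(\cdot),u-v\rangle$ is signed by the convexity gradient inequality together with the monotonicity of $\mathcal{M}$. This buys two things. First, strict positivity comes out directly from the strictly positive leading term, rather than through the paper's separate ``discard the case of equality'' step. Second, you bypass the pointwise factorization, which is only transparently nonnegative when the two factors $|\nabla u|^{m(x)-2}-|\nabla v|^{m(x)-2}$ and $|\nabla u|^{2}-|\nabla v|^{2}$ have the same sign, i.e.\ when the exponents are at least $2$; your convexity argument is uniform in $p^->1$. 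Both proofs rest on the same structural hypotheses ($\mathcal{M}$ nondecreasing and bounded below by $m_{0}>0$), and your observation that the upper growth bound in $(M)$ plays no role in this lemma is accurate.
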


\begin{proof}
Now, we show that $\mathcal{H}$ is strictly monotone. To do so, we argue similarly to \cite{massar2024existence}.
Let $u,v \in W_0^{1,\mathcal{T}}(\Omega)$ with $u \neq v$. Without loss of generality, we can assume that $\varrho_{\mathcal{T}}(u)\geq \varrho_{\mathcal{T}}(v)$. Then, $\mathcal{M}(\varrho_{\mathcal{T}}(u))\geq \mathcal{M}(\varrho_{\mathcal{T}}(v))$ due to $(M)$ and Proposition \ref{Prop:2.7}.\\
Noticing that $\nabla u \cdot \nabla v\leq 2^{-1}(|\nabla u|^{2}+|\nabla v|^{2})$, we obtain
\begin{align}\label{e3.11kb}
&\langle\varrho_{\mathcal{T}}^{\prime}(u),u-v\rangle \nonumber \\
&=\int_{\Omega}(|\nabla u|^{p(x)-2}\nabla u+\mu_1(x)|\nabla u|^{q(x)-2}\nabla u+\mu_2(x)|\nabla u|^{r(x)-2}\nabla u)\cdot \nabla(u-v) dx \nonumber \\
&=\int_{\Omega}\left\{|\nabla u|^{p(x)}+\mu_1(x)|\nabla u|^{q(x)}+\mu_2(x)|\nabla u|^{r(x)}\right. \nonumber \\
&\left.-(|\nabla u|^{p(x)-2}+\mu_1(x)|\nabla u|^{q(x)-2}+\mu_2(x)|\nabla u|^{r(x)-2})\nabla u \cdot \nabla v\right\}dx \nonumber \\
&\geq  2^{-1}\int_{\Omega}(|\nabla u|^{p(x)-2}+\mu_1(x)|\nabla u|^{q(x)-2}+\mu_2(x)|\nabla u|^{r(x)-2})(|\nabla u|^{2}-|\nabla v|^{2})dx,
\end{align}
and similarly
\begin{align}\label{e3.11kc}
&\langle\varrho_{\mathcal{T}}^{\prime}(v), v-u\rangle \nonumber \\
&=\int_{\Omega}(|\nabla v|^{p(x)-2}\nabla v+\mu_1(x)|\nabla v|^{q(x)-2}\nabla v+\mu_2(x)|\nabla v|^{r(x)-2}\nabla v)\cdot \nabla(v-u) dx \nonumber \\
&=\int_{\Omega}\left\{|\nabla v|^{p(x)}+\mu_1(x)|\nabla v|^{q(x)}+\mu_2(x)|\nabla v|^{r(x)}\right. \nonumber \\
&\left.-(|\nabla v|^{p(x)-2}+\mu_1(x)|\nabla v|^{q(x)-2}+\mu_2(x)|\nabla v|^{r(x)-2})\nabla u \cdot \nabla v\right\}dx \nonumber \\
&\geq  2^{-1}\int_{\Omega}(|\nabla v|^{p(x)-2}+\mu_1(x)|\nabla v|^{q(x)-2}+\mu_2(x)|\nabla v|^{r(x)-2})(|\nabla v|^{2}-|\nabla u|^{2})dx,
\end{align}
Next, we partition $\Omega$ into $\Omega_{\geq}=\{x \in \Omega: |\nabla u|\geq |\nabla v| \}$ and $\Omega_{<}=\{x \in \Omega: |\nabla u|< |\nabla v| \}$. \newline Hence, using (\ref{e3.11kb}), (\ref{e3.11kc}) and $(M)$, we can write
\begin{align}\label{e3.11kd}
I_{\geq}(u):&=\mathcal{M}(\varrho_{\mathcal{T}}(u))\langle\varrho_{\mathcal{H}}^{\prime}(u),u-v\rangle\nonumber \\
&=\mathcal{M}(\varrho_{\mathcal{T}}(u))\int_{\Omega}\left\{|\nabla u|^{p(x)}+\mu_1(x)|\nabla u|^{q(x)}+\mu_2(x)|\nabla u|^{r(x)}\right. \nonumber \\
&\left.-(|\nabla u|^{p(x)-2}+\mu_1(x)|\nabla u|^{q(x)-2}+\mu_2(x)|\nabla u|^{r(x)-2})\nabla u \cdot \nabla v\right\}dx \nonumber \\
& \geq \frac {\mathcal{M}_{\varrho}(u)}{2}\int_{\Omega_{\geq}}(|\nabla u|^{p(x)-2}+\mu_1(x)|\nabla u|^{q(x)-2}+\mu_2(x)|\nabla u|^{r(x)-2})(|\nabla u|^{2}-|\nabla v|^{2})dx\nonumber\\
&- \frac {\mathcal{M}(\varrho_{\mathcal{T}}(v))}{2}\int_{\Omega_{\geq}}(|\nabla v|^{p(x)-2}+\mu_1(x)|\nabla v|^{q(x)-2}+\mu_2(x)|\nabla v|^{r(x)-2})(|\nabla u|^{2}-|\nabla v|^{2})dx\nonumber\\
& \geq \frac {\mathcal{M}(\varrho_{\mathcal{T}}(v))}{2}\int_{\Omega_{\geq}}\left\{(|\nabla u|^{p(x)-2}+\mu_1(x)|\nabla u|^{q(x)-2}+\mu_2(x)|\nabla u|^{r(x)-2})\right.\nonumber \\
&\left.-(|\nabla v|^{p(x)-2}+\mu_1(x)|\nabla v|^{q(x)-2}+\mu_2(x)|\nabla v|^{r(x)-2})\right\} (|\nabla u|^{2}-|\nabla v|^{2})dx\nonumber\\
& \geq \frac {m_0}{2}\int_{\Omega_{\geq}}\left\{(|\nabla u|^{p(x)-2}+\mu_1(x)|\nabla u|^{q(x)-2}+\mu_2(x)|\nabla u|^{r(x)-2})\right.\nonumber\\
&\left.-(|\nabla v|^{p(x)-2}+\mu_1(x)|\nabla v|^{q(x)-2}+\mu_2(x)|\nabla v|^{r(x)-2})\right\}(|\nabla u|^{2}-|\nabla v|^{2})dx\nonumber\\
& \geq 0,
\end{align}
and in a similar fashion
\begin{align}\label{e3.11ke}
I_{<}(v):&=\mathcal{M}(\varrho_{\mathcal{T}}(v))\langle\varrho_{\mathcal{H}}^{\prime}(v),v-u\rangle\nonumber \\
&=\mathcal{M}(\varrho_{\mathcal{T}}(v))\int_{\Omega_{<}}\left\{|\nabla v|^{p(x)}+\mu_1(x)|\nabla v|^{q(x)}+\mu_2(x)|\nabla v|^{r(x)}\right.\nonumber \\
&\left.-(|\nabla v|^{p(x)-2}+\mu_1(x)|\nabla v|^{q(x)-2}+\mu_2(x)|\nabla v|^{r(x)-2}) \nabla u \cdot \nabla v\right\}dx \nonumber \\
&\geq \frac {m_0}{2}\int_{\Omega_{<}}\left\{(|\nabla u|^{p(x)-2}+\mu(x)_1|\nabla v|^{q(x)-2}+\mu(x)_2|\nabla v|^{r(x)-2})\right.\nonumber \\
&\left.-(|\nabla v|^{p(x)-2}+\mu_1(x)|\nabla v|^{q(x)-2}+\mu_2(x)|\nabla v|^{r(x)-2})\right\}(|\nabla u|^{2}-|\nabla v|^{2})dx \nonumber \\
& \geq 0.
\end{align}
Note that
\begin{align}\label{e3.11kf}
\langle \mathcal{H}(u)-\mathcal{H}(v),u-v \rangle =\langle \mathcal{H}(u),u-v \rangle+\langle \mathcal{H}(v),v-u \rangle = I_{\geq}(u)+I_{<}(v)\geq 0.
\end{align}
However, we must discard the case of $\langle \mathcal{H}(u)-\mathcal{H}(v),u-v \rangle =0$ since this would eventually imply that $\partial_{x_i}u=\partial_{x_i}v$ for $i=1,2,...,N$, which would contradict the assumption that $u \neq v $ in $W_0^{1,\mathcal{T}}(\Omega)$. Thus,
\begin{align}\label{e3.11kg}
\langle \mathcal{H}(u)-\mathcal{H}(v),u-v \rangle >0.
\end{align}
\end{proof}
\newpage
\begin{proof}[Proof of Theorem \ref{Thrm:3.1}]
Thanks to Lemmas \ref{Lem:3.2}-\ref{Lem:3.4}, the operator equation (\ref{e3.5c}) has a unique nontrivial solution $u \in W_0^{1,\mathcal{T}}(\Omega)$, which is a nontrivial weak solution to problem (\ref{e1.1}).
\end{proof}

\section{The second problem}

In this section we specify the nonlinearity as $f=f(x,u,\nabla u)$ and show that the problem (\ref{e1.1}) has a nontrivial weak solution in $W_0^{1,\mathcal{T}}(\Omega)$.

\begin{definition}\label{Def:4.1a} A function $u\in W_0^{1,\mathcal{T}}(\Omega)$ is called a weak solution to problem (\ref{e1.1}) if for all test function $\varphi \in W_0^{1,\mathcal{T}}(\Omega)$ it holds
\begin{align}\label{e4.1a}
\mathcal{M}(\varrho_{\mathcal{T}}(u))\langle\varrho^{\prime}_{\mathcal{T}}(u),\varphi\rangle=\int_{\Omega}f(x,u,\nabla u) \varphi dx.
\end{align}
\end{definition}
Let us define the operator $\mathcal{F}:W_0^{1,\mathcal{H}}(\Omega)\rightarrow W_0^{1,\mathcal{T}}(\Omega)^{*}$ as
\begin{equation}\label{e4.3a}
\langle \mathcal{F}(u),\varphi\rangle:=\int_{\Omega}f(x,u,\nabla u)\varphi dx.
\end{equation}
Therefore, we can define the operator $\mathcal{A}:W_0^{1,\mathcal{T}}(\Omega)\rightarrow W_0^{1,\mathcal{T}}(\Omega)^{*}$ by
\begin{equation} \label{e4.4a}
\langle \mathcal{A}(u),\varphi\rangle:=\langle \mathcal{H}(u),\varphi\rangle-\langle \mathcal{F}(u),\varphi\rangle,\,\, \mbox{ for all } u,\varphi \in W_0^{1,\mathcal{T}}(\Omega).
\end{equation}
As with the first problem, we shall solve the operator equation
\begin{equation} \label{e4.5a}
\mathcal{A}u=\mathcal{H}u-\mathcal{F}u=0,
\end{equation}
to obtain a nontrivial weak solution to problem (\ref{e1.1}).\\

For the nonlinearity $f$, we assume:

\begin{itemize}
\item[$(f_{1})$] $f:\Omega \times \mathbb{R} \times \mathbb{R}^{N} \to \mathbb{R}$ is a Carathéodory function such that $f(\cdot,0,0)\neq 0$.
\item[$(f_{2})$] There exists $g \in L^{s^\prime(x)}(\Omega)$ and constants $a_{1},a_{2}>0$ satisfying
\begin{equation*}
|f(x,t,\eta)| \leq g(x)+a_{1}|t|^{s(x)-1} +a_{2}|\eta|^{p(x)\frac{s(x)-1}{s(x)}},
\end{equation*}
for all $(t,\eta) \in \mathbb{R} \times \mathbb{R}^{N}$ and for a.a. $x\in \Omega$, where $s\in C_+(\overline{\Omega}) $ such that $s^\prime(x):= \frac{s(x)}{s(x)-1}$ and $s(x)<p^*(x)$.
\item[$(f_{3})$] There exist $h \in L^{p^\prime(x)}(\Omega)$, $\beta_1 \in L^{\frac {r(x)}{r(x)-\alpha^-}}(\Omega)$ and $\beta_1 \in L^{\frac {r(x)}{r(x)-\alpha(x)}}(\Omega)$ satisfying
\begin{equation*}
\limsup_{|\eta| \to +\infty} \frac{f(x,t,\eta)}{h(x)+\beta_1(x)|t|^{\alpha^--1}+\beta_2(x)|\eta|^{\alpha(x)-1}} \leq \lambda,
\end{equation*}
for all $(t,\eta) \in \mathbb{R} \times \mathbb{R}^{N}$ and for a.a. $x\in \Omega$, where $\alpha \in C_+(\overline{\Omega})$ such that $\alpha^+<p^-$, and $\lambda>0$ is a parameter.
\end{itemize}

\begin{example}
Let's define the function $\hat{f}: \Omega \times \mathbb{R} \times \mathbb{R}^N \to \mathbb{R}$ as follows:
$$
\hat{f}(x,t,\eta) = \hat{g}(x)\sin(t+1)+ \hat{a}_1 |t|^{s(x)-1} e^{-|t|} + \hat{a}_2 |\eta|^{p(x)-1} \ln(1+|\eta|),
$$
where $\hat{g}\in L^{s^*(x)}(\Omega)$, $\hat{a}_1, \hat{a}_2 > 0 $ are positive constants. Then $\hat{f}$ satisfies hypotheses $(f_1)-(f_3)$.
\end{example}

We employ the following result (see, e.g., \cite{zeidler2013nonlinear,papageorgiou2018applied}).
\begin{lemma}\label{Lem:4.1a}
Let $X$ be a reflexive real Banach space. Let $A:X\rightarrow X^{*}$ be a pseudomonotone, bounded, and coercive operator, and $B \in X^{*}$. Then, a solution of the equation $Au=B$ exists.
\end{lemma}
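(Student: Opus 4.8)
The plan is to prove surjectivity by the Galerkin method, so I begin by reducing to a finite-dimensional setting. We may assume $X$ is separable (which holds for the space $W_0^{1,\mathcal{T}}(\Omega)$ to which the lemma is applied), and fix linearly independent $w_1,w_2,\dots$ whose span is dense in $X$, setting $X_n=\operatorname{span}\{w_1,\dots,w_n\}$ so that $X_1\subset X_2\subset\cdots$ and $\overline{\bigcup_n X_n}=X$. For each $n$ I seek $u_n\in X_n$ solving the Galerkin system $\langle A(u_n),w_j\rangle=\langle B,w_j\rangle$ for $j=1,\dots,n$. Identifying $X_n\cong\mathbb{R}^n$ via coordinates, this is exactly finding a zero of the continuous vector field $u\mapsto(\langle A(u)-B,w_j\rangle)_{j=1}^n$, whose Euclidean pairing with the coordinate vector of $u$ equals $\langle A(u)-B,u\rangle$. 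Continuity here relies on the standard fact that a bounded pseudomonotone operator is demicontinuous, which I would record first. Coercivity gives $\langle A(u)-B,u\rangle>0$ for $\|u\|$ large, so the \emph{acute angle} corollary of Brouwer's fixed point theorem produces a solution $u_n\in X_n$.

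I then derive a priori bounds: testing the Galerkin identity with $u_n\in X_n$ yields $\langle A(u_n),u_n\rangle=\langle B,u_n\rangle\le\|B\|_{X^*}\|u_n\|$, so coercivity forces $(u_n)$ to be bounded in $X$. By reflexivity, along a subsequence $u_n\rightharpoonup u$ in $X$; since $A$ is bounded, $(A(u_n))$ is bounded in $X^*$ and (again by reflexivity) we may assume $A(u_n)\rightharpoonup\chi$ in $X^*$. Fixing $v\in X_m$ and passing to the limit over $n\ge m$ in the Galerkin identity gives $\langle\chi,v\rangle=\langle B,v\rangle$, and density then identifies $\chi=B$, i.e. $A(u_n)\rightharpoonup B$.

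The decisive step is to verify the hypothesis of pseudomonotonicity at $u$. Writing $\langle A(u_n),u_n-u\rangle=\langle B,u_n\rangle-\langle A(u_n),u\rangle$ and using $u_n\rightharpoonup u$ together with $A(u_n)\rightharpoonup B$, both terms tend to $\langle B,u\rangle$, so $\limsup_n\langle A(u_n),u_n-u\rangle\le 0$. Pseudomonotonicity then gives, for every $v\in X$, $\langle A(u),u-v\rangle\le\liminf_n\langle A(u_n),u_n-v\rangle$; evaluating the right-hand side via the Galerkin identity and the two weak convergences shows it equals $\langle B,u-v\rangle$, whence $\langle A(u)-B,u-v\rangle\le 0$ for all $v\in X$. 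Choosing $v=u-tw$ with $t>0$ and arbitrary $w\in X$, dividing by $t$ and letting $t\downarrow 0$ gives $\langle A(u)-B,w\rangle\le 0$, and replacing $w$ by $-w$ upgrades this to equality, so $A(u)=B$.

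The main obstacle is the finite-dimensional solvability: one must confirm that the Galerkin vector field is genuinely continuous before invoking Brouwer, which is precisely where demicontinuity of the bounded pseudomonotone operator enters, and one must correctly identify the weak limit of $A(u_n)$ as $B$ rather than an undetermined $\chi$, since the entire passage to the limit in the pseudomonotonicity inequality depends on this identification.
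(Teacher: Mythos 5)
Your argument is correct, but note that the paper does not prove this lemma at all: it is quoted as a known surjectivity theorem for pseudomonotone operators with a citation to Zeidler and to Papageorgiou--Winkert, so there is no in-paper proof to compare against. What you have written is essentially the classical Galerkin-method proof of that theorem (Br\'ezis' theorem, Zeidler's Theorem 27.A), and all the key steps are in place: demicontinuity of a bounded pseudomonotone operator to make the finite-dimensional vector field continuous, the acute-angle corollary of Brouwer plus coercivity for solvability of the Galerkin systems, the a priori bound from $\langle A(u_n),u_n\rangle=\langle B,u_n\rangle$, identification of the weak limit of $A(u_n)$ as $B$ via density of $\bigcup_n X_n$, verification of $\limsup_n\langle A(u_n),u_n-u\rangle\le 0$, and the Minty-type variation $v=u-tw$ to upgrade $\langle A(u)-B,u-v\rangle\le 0$ to $A(u)=B$. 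Two small remarks. First, the lemma as stated assumes only that $X$ is reflexive, whereas your Galerkin scheme needs a countable dense family; your parenthetical reduction to the separable case is legitimate for the application (the Musielak--Orlicz Sobolev space here is separable), but for the general statement one either invokes the standard reduction to separable closed subspaces or runs the Galerkin argument over the directed family of all finite-dimensional subspaces. Second, the fact you ``would record first'' --- that a bounded pseudomonotone operator is demicontinuous --- is genuinely load-bearing and deserves its one-line proof (if $u_k\to u$ strongly then $\langle A(u_k),u_k-u\rangle\to 0$ by boundedness, so pseudomonotonicity pins down every weak cluster point of $A(u_k)$ as $A(u)$); with that supplied, the proof is complete.
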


The following is the second main result.

\begin{theorem}\label{Thrm:4.2a} Assume the assumptions $(M)$ and $(f_1)$-$(f_3)$ are satisfied. Then the operator equation (\ref{e4.5a}) has at least one nontrivial solution $u \in W_0^{1,\mathcal{T}}(\Omega)$ which in turn becomes a nontrivial weak solution to problem (\ref{e1.1}).
\end{theorem}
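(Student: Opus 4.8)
The plan is to verify the three hypotheses of Lemma \ref{Lem:4.1a} for the operator $\mathcal{A}=\mathcal{H}-\mathcal{F}$ on the reflexive Banach space $X=W_0^{1,\mathcal{T}}(\Omega)$, namely that $\mathcal{A}$ is bounded, coercive, and pseudomonotone, and then conclude that $\mathcal{A}u=0$ has a solution, which by Definition \ref{Def:4.1a} is a weak solution of (\ref{e1.1}). The first task is to control $\mathcal{F}$. Using the growth condition $(f_2)$ together with the H\"older inequality and the compact embeddings $W_0^{1,\mathcal{T}}(\Omega)\hookrightarrow L^{s(\cdot)}(\Omega)$ and $W_0^{1,\mathcal{T}}(\Omega)\hookrightarrow L^{p(\cdot)}(\Omega)$ from Proposition \ref{Prop:2.7a}, I would show that $\mathcal{F}:W_0^{1,\mathcal{T}}(\Omega)\to W_0^{1,\mathcal{T}}(\Omega)^{*}$ is well-defined and bounded; since $\mathcal{H}$ is already bounded (this follows from the growth in $(M)$ together with Remark \ref{Rem:3.4ab} as in Lemma \ref{Lem:3.2}), boundedness of $\mathcal{A}$ follows.

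For coercivity I would estimate $\langle\mathcal{A}(u),u\rangle=\langle\mathcal{H}(u),u\rangle-\langle\mathcal{F}(u),u\rangle$. From (\ref{e3.a1a}) in Lemma \ref{Lem:3.2} the principal part gives $\langle\mathcal{H}(u),u\rangle\geq \tfrac{m_0}{r^+}\|u\|_{1,\mathcal{T},0}^{p^-}$ for $\|u\|_{1,\mathcal{T},0}>1$. The decisive structural point is that $(f_3)$ forces the growth of $f$ in $\eta$ to be of order $\alpha(x)-1$ with $\alpha^+<p^-$; using this asymptotic bound together with $(f_2)$ and the embeddings, I would bound $|\langle\mathcal{F}(u),u\rangle|$ by a sum of terms of order strictly less than $p^-$ in $\|u\|_{1,\mathcal{T},0}$ (plus a constant coming from $g$ and $h$). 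Since $p^->\alpha^+$ and $p^->s\cdot(\text{subcritical exponents})$ appearing are subordinate, the leading term $\tfrac{m_0}{r^+}\|u\|^{p^-}$ dominates, so $\langle\mathcal{A}(u),u\rangle/\|u\|_{1,\mathcal{T},0}\to+\infty$ and coercivity holds.

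Pseudomonotonicity is where the main work lies. I would take $u_n\rightharpoonup u$ in $W_0^{1,\mathcal{T}}(\Omega)$ with $\limsup_n\langle\mathcal{A}(u_n),u_n-u\rangle\leq 0$ and aim to deduce $u_n\to u$ strongly (which yields both $\mathcal{A}(u_n)\rightharpoonup\mathcal{A}(u)$ and $\langle\mathcal{A}(u_n),u_n-u\rangle\to 0$, the strong conclusion being more than enough). By the compact embeddings of Proposition \ref{Prop:2.7a}$(ii)$ we have $u_n\to u$ strongly in $L^{s(\cdot)}$ and in $L^{p(\cdot)}$, and passing to a subsequence $u_n\to u$, $\nabla u_n\to\nabla u$ a.e.; combined with the growth $(f_2)$ and a Krasnoselskii-type continuity/equi-integrability argument, this gives $\langle\mathcal{F}(u_n),u_n-u\rangle\to 0$. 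Feeding this back into the hypothesis yields $\limsup_n\langle\mathcal{H}(u_n),u_n-u\rangle\leq 0$, and since $\mathcal{H}(u_n)=\mathcal{M}(\varrho_{\mathcal{T}}(u_n))\varrho_{\mathcal{T}}'(u_n)$ with $\mathcal{M}\geq m_0>0$, this reduces to $\limsup_n\langle\varrho_{\mathcal{T}}'(u_n),u_n-u\rangle\leq 0$, at which point the $(S_+)$-property of $\varrho_{\mathcal{T}}'$ from Proposition \ref{Prop:2.7}$(ii)$ delivers the strong convergence $u_n\to u$.

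The hard part will be the passage from the Kirchhoff-weighted operator $\mathcal{H}$ to the bare $(S_+)$-operator $\varrho_{\mathcal{T}}'$: because $\mathcal{M}(\varrho_{\mathcal{T}}(u_n))$ is evaluated at the argument $u_n$ that varies with $n$, I must first confirm that $\{\varrho_{\mathcal{T}}(u_n)\}$ stays bounded (from weak convergence and boundedness of the sequence in norm) so that $\mathcal{M}(\varrho_{\mathcal{T}}(u_n))$ is bounded and bounded below by $m_0$, and then carefully extract the limit so that the positive lower bound $m_0$ allows me to strip off the Kirchhoff factor without losing the sign of the $\limsup$. Once the $(S_+)$-property applies, continuity of $\mathcal{A}$ along the strongly convergent sequence closes the argument, and Lemma \ref{Lem:4.1a} produces the desired solution; finally I note the solution is nontrivial because $(f_1)$ guarantees $f(\cdot,0,0)\neq 0$, so $u\equiv 0$ cannot satisfy (\ref{e4.1a}).
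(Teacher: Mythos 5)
Your proposal is correct and follows essentially the same route as the paper: coercivity of $\mathcal{A}$ from the lower bound $\langle\mathcal{H}(u),u\rangle\geq \tfrac{m_0}{r^+}\|u\|_{1,\mathcal{T},0}^{p^-}$ against the sub-$p^-$ growth forced by $(f_3)$, boundedness and continuity of $\mathcal{F}$ via $(f_2)$ and the compact embeddings, pseudomonotonicity by killing $\langle\mathcal{F}(u_n),u_n-u\rangle$ through the compact embedding into $L^{s(\cdot)}(\Omega)$ and then invoking the $(S_+)$-property of $\varrho'_{\mathcal{T}}$ after stripping the Kirchhoff factor using $\mathcal{M}\geq m_0>0$, and finally the Browder-type surjectivity result of Lemma \ref{Lem:4.1a} together with $(f_1)$ for nontriviality. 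The only cosmetic remark is that your appeal to $\nabla u_n\to\nabla u$ a.e.\ at the pseudomonotonicity stage is neither justified by weak convergence alone nor needed, since $\langle\mathcal{F}(u_n),u_n-u\rangle\to 0$ already follows from H\"older's inequality, the boundedness of $\mathcal{B}_f(u_n)$ in $L^{s'(\cdot)}(\Omega)$, and the strong convergence $u_n\to u$ in $L^{s(\cdot)}(\Omega)$.
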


\begin{lemma}\label{Lem:4.3a}
$\mathcal{A}$ is coercive.
\end{lemma}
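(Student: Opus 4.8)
The plan is to exploit the splitting $\langle \mathcal{A}(u),u\rangle = \langle\mathcal{H}(u),u\rangle - \langle\mathcal{F}(u),u\rangle$ and to show that the elliptic part dominates at infinity. For the $\mathcal{H}$-part I would reuse the computation of Lemma \ref{Lem:3.2}: since $\langle\mathcal{H}(u),u\rangle = \mathcal{M}(\varrho_{\mathcal{T}}(u))\rho_{\mathcal{T}}(\nabla u)$, hypothesis $(M)$ gives $\mathcal{M}\geq m_0$, and combining with Remark \ref{Rem:3.4ab} yields, for $\|u\|_{1,\mathcal{T},0}>1$,
\begin{equation*}
\langle\mathcal{H}(u),u\rangle \;\geq\; m_0\,\rho_{\mathcal{T}}(\nabla u)\;\geq\; m_0\,\|u\|_{1,\mathcal{T},0}^{p^-}.
\end{equation*}
Thus the whole problem reduces to proving that $|\langle\mathcal{F}(u),u\rangle|$ is of strictly lower order than $\|u\|_{1,\mathcal{T},0}^{p^-}$.

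Next I would estimate $\langle\mathcal{F}(u),u\rangle = \int_\Omega f(x,u,\nabla u)\,u\,dx$. It is important to note that the growth from $(f_2)$ alone is useless here, because $s^+$ may exceed $p^-$; the decisive hypothesis is $(f_3)$, whose sub-$p^-$ rates (guaranteed by $\alpha^+<p^-$) are exactly what renders the perturbation lower order. From $(f_3)$, supplemented by $(f_2)$ to cover the regime of bounded gradient, I would extract a pointwise bound of the form $|f(x,u,\nabla u)| \leq C\big(h(x)+g(x)+\beta_1(x)|u|^{\alpha^--1}+\beta_2(x)|\nabla u|^{\alpha(x)-1}\big)$ up to integrable lower-order corrections. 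The $h$- and $g$-terms are then bounded by $2|h|_{p'(\cdot)}|u|_{p(\cdot)}\leq C\|u\|_{1,\mathcal{T},0}$ using $h\in L^{p'(\cdot)}$ and the embedding $W_0^{1,\mathcal{T}}(\Omega)\hookrightarrow L^{p(\cdot)}(\Omega)$ of Proposition \ref{Prop:2.7a}; and the $\beta_1$-term is controlled by Hölder with the conjugate exponents $\tfrac{r(\cdot)}{r(\cdot)-\alpha^-}$ and $\tfrac{r(\cdot)}{\alpha^-}$, giving $2|\beta_1|_{r/(r-\alpha^-)}|u|_{r(\cdot)}^{\alpha^-}\leq C\|u\|_{1,\mathcal{T},0}^{\alpha^-}$ via Proposition \ref{Prop:2.2bb}(iii) and the compact embedding into $L^{r(\cdot)}(\Omega)$ (valid since $r(x)<p^*(x)$ under $(H_1)$). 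Since $\alpha^-\leq\alpha^+<p^-$, both contributions are $o(\|u\|_{1,\mathcal{T},0}^{p^-})$.

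The hard part is the gradient-dependent term $\int_\Omega \beta_2(x)|\nabla u|^{\alpha(x)-1}|u|\,dx$, because $\nabla u\in L^{\mathcal{T}}(\Omega)$ only controls $|\nabla u|$ in $L^{p(\cdot)}(\Omega)$ globally (the $r$-phase is active merely on $\{\mu_2>0\}$), so one cannot simply invoke an $L^{r(\cdot)}$-bound on $\nabla u$. I would handle it by Young's inequality with the variable exponents $\tfrac{p(x)}{\alpha(x)-1}$ and $\tfrac{p(x)}{p(x)-\alpha(x)+1}$, splitting
\begin{equation*}
\beta_2(x)|\nabla u|^{\alpha(x)-1}|u| \;\leq\; \varepsilon\,|\nabla u|^{p(x)} \;+\; C_\varepsilon\,\big(\beta_2(x)|u|\big)^{\frac{p(x)}{p(x)-\alpha(x)+1}},
\end{equation*}
so that after integration the first piece is $\varepsilon\,\rho_{\mathcal{T}}(\nabla u)\leq \tfrac{\varepsilon}{m_0}\langle\mathcal{H}(u),u\rangle$ and is absorbed once $\varepsilon<m_0$, while the second is estimated by Hölder against $\beta_2\in L^{r/(r-\alpha)}(\Omega)$ and the embedding into $L^{r(\cdot)}(\Omega)$, producing a power of $\|u\|_{1,\mathcal{T},0}$ strictly below $p^-$ thanks again to $\alpha^+<p^-$.

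Collecting all contributions I expect to reach
\begin{equation*}
\langle\mathcal{A}(u),u\rangle \;\geq\; (m_0-\varepsilon)\,\|u\|_{1,\mathcal{T},0}^{p^-} \;-\; C\big(1+\|u\|_{1,\mathcal{T},0}^{\theta}\big), \qquad \theta<p^-,
\end{equation*}
whence dividing by $\|u\|_{1,\mathcal{T},0}$ and letting $\|u\|_{1,\mathcal{T},0}\to\infty$ gives coercivity. The main obstacle is precisely this reconciliation of the two growth hypotheses $(f_2)$ and $(f_3)$ together with the careful bookkeeping of the variable-exponent Hölder and Young estimates, so that every $\mathcal{F}$-contribution is provably $o(\|u\|_{1,\mathcal{T},0}^{p^-})$ and the elliptic lower bound is not destroyed.
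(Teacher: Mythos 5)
Your proposal follows the same skeleton as the paper's proof: split $\langle\mathcal{A}(u),u\rangle=\langle\mathcal{H}(u),u\rangle-\langle\mathcal{F}(u),u\rangle$, reuse the lower bound of Lemma \ref{Lem:3.2} for $\mathcal{H}$, and show via $(f_3)$ (with $\alpha^+<p^-$) that $\langle\mathcal{F}(u),u\rangle$ grows at a strictly lower rate; the treatment of the $h$- and $\beta_1$-terms by variable-exponent H\"older and the embeddings of Propositions \ref{Prop:2.2bb} and \ref{Prop:2.7a} is essentially identical to (\ref{e4.7a}). Where you genuinely depart from the paper is the gradient term $\int_\Omega\beta_2|\nabla u|^{\alpha(x)-1}|u|\,dx$: the paper estimates it directly by H\"older as $|\beta_2|_{\frac{r(x)}{r(x)-\alpha(x)}}\,\big||\nabla u|^{\alpha(x)-1}\big|_{\frac{r(x)}{\alpha(x)-1}}\,|u|_{r(x)}$ and then bounds the middle factor by a power of $\|u\|_{1,\mathcal{T},0}$, which implicitly requires $\nabla u\in L^{r(\cdot)}(\Omega)$ --- a fact not supplied by $u\in W_0^{1,\mathcal{T}}(\Omega)$, since membership in $L^{\mathcal{T}}$ controls $|\nabla u|$ only in $L^{p(\cdot)}$ globally (the $r$-phase is weighted by $\mu_2$). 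Your Young-inequality splitting $\beta_2|\nabla u|^{\alpha(x)-1}|u|\le\varepsilon|\nabla u|^{p(x)}+C_\varepsilon(\beta_2|u|)^{\frac{p(x)}{p(x)-\alpha(x)+1}}$, with the $\varepsilon$-piece absorbed into $\langle\mathcal{H}(u),u\rangle\ge m_0\rho_{\mathcal{T}}(\nabla u)$, avoids this issue entirely and is the more robust route; the only bookkeeping you must still verify is that $\sup_x\frac{p(x)}{p(x)-\alpha(x)+1}<p^-$ (which does follow from $\alpha^+<p^-$) so that the complementary term is genuinely of lower order. You are also more careful than the paper in observing that $(f_3)$ is only an asymptotic condition in $|\eta|$ and must be patched by $(f_2)$ on the set where the gradient is bounded, whereas the paper's ``assume $|\nabla u|>k_0$'' conflates a pointwise condition with a norm condition. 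In short: same strategy, but your handling of the gradient term is a genuine and preferable variant.
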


\begin{proof}
Since the coercivity of $\mathcal{H}$ is shown in Lemma \ref{Lem:3.2}, we  proceed with $\mathcal{F}$.\\
Without loosing generality, we may assume that $|\nabla u|>k_0$ for some constant $k_0\geq 1$, and hence $\|u\|_{1,\mathcal{T},0}>1$. Indeed, if we recall that $\|u\|_{1,\mathcal{T},0}=\||\nabla u|\|_{\mathcal{T}}$, and take into account the monotonicity of convex modular $\rho_\mathcal{T}$ and Remark \ref{Rem:3.4ab}, we can conclude that $\|u\|_{1,\mathcal{T},0}\geq (\rho_\mathcal{T}(\nabla u))^{1/r^+}>1$. Therefore, from $(f_3)$, we have
\begin{equation} \label{e4.6a}
|f(x,u,\nabla u)| \leq \lambda \left(|h|+|\beta_1||u|^{\alpha^--1}+|\beta_2||\nabla u|^{\alpha(x)-1}\right).
\end{equation}
Using H\"{o}lder inequality (see \cite[Proposition 2.3]{avci2019topological}), Proposition \ref{Prop:2.2bb} and invoking the necessary embeddings, we get
\begin{align}\label{e4.7a}
\langle \mathcal{F}(u),u\rangle &\leq \int_{\Omega}\lambda \left(|h||u|+|\beta_1||u|^{\alpha^--1}|u|+|\beta_2||\nabla u|^{\alpha(x)-1}|u|\right)dx\nonumber\\
&\leq \lambda \left(|h|_{p^{\prime}}|u|_{p(x)}+|\beta_1|_{\frac{r(x)}{r(x)-\alpha^-}}||u|^{\alpha^--1}|_{\frac{r(x)}{\alpha^--1}}|u|_{r(x)}\right. \nonumber\\
&\left. +|\beta_2|_{\frac{r(x)}{r(x)-\alpha(x)}}||\nabla u|^{\alpha(x)-1}|_{\frac{r(x)}{\alpha(x)-1}}|u|_{r(x)} \right)\nonumber\\
& \leq \lambda \left(c_1\|u\|^{\alpha^+}_{1,\mathcal{T},0}+c_2\|u\|^{\alpha^-}_{1,\mathcal{T},0}+c_3\|u\|_{1,\mathcal{T},0} \right),
\end{align}
and hence
\begin{equation} \label{e4.8a}
\frac{\langle \mathcal{F}(u),u\rangle}{\|u\|_{1,\mathcal{T},0}} \leq \lambda \left(c_1\|u\|^{\alpha^+-1}_{1,\mathcal{T},0}+c_2\|u\|^{\alpha^--1}_{1,\mathcal{T},0}+c_3 \right).
\end{equation}
From Lemma \ref{Lem:3.2}, we have
\begin{equation}\label{e4.9a}
\frac{\langle\mathcal{H}(u),u\rangle}{\|u\|_{1,\mathcal{T},0}}\geq \frac{m_0}{r^+}\|u\|_{1,\mathcal{T},0}^{p^{-}-1}.
\end{equation}
Then from (\ref{e4.8a}) and (\ref{e4.9a}), we have
\begin{align}\label{ee4.10a}
\frac{\langle \mathcal{A}(u),u\rangle}{\|u\|_{1,\mathcal{T},0}}& \geq \frac{\langle \mathcal{H}(u),u\rangle}{\|u\|_{1,\mathcal{T},0}}-\frac{\langle \mathcal{F}(u),u\rangle}{\|u\|_{1,\mathcal{T},0}}\nonumber\\
& \geq \frac{m_0}{r^+}\|u\|_{1,\mathcal{T},0}^{p^{-}-1}-\lambda \left(c_1\|u\|^{\alpha^+-1}_{1,\mathcal{H},0}+c_2\|u\|^{\alpha^--1}_{1,\mathcal{H},0}+c_3 \right),
\end{align}
hence
\begin{align}\label{e4.11a}
\lim_{\|u\|_{1,\mathcal{T},0} \to \infty}\frac{\langle \mathcal{A}(u),u\rangle}{\|u\|_{1,\mathcal{T},0}}=+\infty.
\end{align}
\end{proof}

\begin{lemma}\label{Lem:4.4a}
$\mathcal{H}$ is continuous and bounded.
\end{lemma}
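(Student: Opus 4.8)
The plan is to read off both properties from the factorisation $\langle\mathcal{H}(u),\varphi\rangle=\mathcal{M}(\varrho_{\mathcal{T}}(u))\langle\varrho^{\prime}_{\mathcal{T}}(u),\varphi\rangle$, which is the chain rule applied to $\mathcal{G}=\widehat{\mathcal{M}}\circ\varrho_{\mathcal{T}}$, and then to control each factor separately using the tools already collected in Section~2. (At the degenerate value $\varrho_{\mathcal{T}}(u)=0$, i.e.\ $u=0$, one has $\varrho^{\prime}_{\mathcal{T}}(0)=0$ and hence $\mathcal{H}(0)=0$, so that point requires no special care.)

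For boundedness, fix $R>0$ and suppose $\|u\|_{1,\mathcal{T},0}\leq R$; I may assume $\|u\|_{1,\mathcal{T},0}>1$, the complementary case being only easier. First, by $(M)$ together with the bound $\varrho_{\mathcal{T}}(u)\leq \tfrac{1}{p^-}\|u\|_{1,\mathcal{T},0}^{r^+}$ from Remark~\ref{Rem:3.4ab}, the scalar factor obeys $\mathcal{M}(\varrho_{\mathcal{T}}(u))\leq \kappa\,\varrho_{\mathcal{T}}(u)^{\gamma-1}\leq \kappa (p^-)^{-(\gamma-1)}\|u\|_{1,\mathcal{T},0}^{r^+(\gamma-1)}$, which is bounded on the ball of radius $R$. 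Second, for any $\varphi$ with $\|\varphi\|_{1,\mathcal{T},0}\leq 1$ the three summands of $\langle\varrho^{\prime}_{\mathcal{T}}(u),\varphi\rangle$ are estimated by the variable-exponent Hölder inequality, by Proposition~\ref{Prop:2.2bb} to convert $\bigl|\,|\nabla u|^{p(x)-1}\bigr|_{p^\prime(x)}$ (and the weighted $q,r$ analogues) into powers of the corresponding norms, and finally by Proposition~\ref{Prop:2.7a} together with the continuous inclusions $L^{\mathcal{T}}(\Omega)\hookrightarrow L^{q(\cdot)}_{\mu_1}(\Omega),\,L^{r(\cdot)}_{\mu_2}(\Omega)$. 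This yields $|\langle\varrho^{\prime}_{\mathcal{T}}(u),\varphi\rangle|\leq C\bigl(\|u\|_{1,\mathcal{T},0}^{p^+-1}+\|u\|_{1,\mathcal{T},0}^{q^+-1}+\|u\|_{1,\mathcal{T},0}^{r^+-1}\bigr)$. Taking the supremum over such $\varphi$ and multiplying by the scalar bound shows $\|\mathcal{H}(u)\|_{*}$ is controlled by a fixed polynomial in $\|u\|_{1,\mathcal{T},0}$, hence bounded on bounded sets.

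For continuity, let $u_n\to u$ in $W_0^{1,\mathcal{T}}(\Omega)$ and write, for $\|\varphi\|_{1,\mathcal{T},0}\leq 1$,
\begin{align*}
\langle\mathcal{H}(u_n)-\mathcal{H}(u),\varphi\rangle
&=\mathcal{M}(\varrho_{\mathcal{T}}(u_n))\,\langle\varrho^{\prime}_{\mathcal{T}}(u_n)-\varrho^{\prime}_{\mathcal{T}}(u),\varphi\rangle\\
&\quad+\bigl(\mathcal{M}(\varrho_{\mathcal{T}}(u_n))-\mathcal{M}(\varrho_{\mathcal{T}}(u))\bigr)\,\langle\varrho^{\prime}_{\mathcal{T}}(u),\varphi\rangle,
\end{align*}
so that, taking suprema,
\begin{align*}
\|\mathcal{H}(u_n)-\mathcal{H}(u)\|_{*}
&\leq \mathcal{M}(\varrho_{\mathcal{T}}(u_n))\,\|\varrho^{\prime}_{\mathcal{T}}(u_n)-\varrho^{\prime}_{\mathcal{T}}(u)\|_{*}\\
&\quad+\bigl|\mathcal{M}(\varrho_{\mathcal{T}}(u_n))-\mathcal{M}(\varrho_{\mathcal{T}}(u))\bigr|\,\|\varrho^{\prime}_{\mathcal{T}}(u)\|_{*}.
\end{align*}
Convergent sequences are bounded, so the boundedness just proved controls $\mathcal{M}(\varrho_{\mathcal{T}}(u_n))$ uniformly; because $\varrho_{\mathcal{T}}\in C^1$ its derivative $\varrho^{\prime}_{\mathcal{T}}$ is norm-continuous, whence $\|\varrho^{\prime}_{\mathcal{T}}(u_n)-\varrho^{\prime}_{\mathcal{T}}(u)\|_{*}\to 0$; and since $\varrho_{\mathcal{T}}$ and $\mathcal{M}$ are continuous, $\mathcal{M}(\varrho_{\mathcal{T}}(u_n))\to\mathcal{M}(\varrho_{\mathcal{T}}(u))$ while $\|\varrho^{\prime}_{\mathcal{T}}(u)\|_{*}$ is a fixed constant. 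Both terms vanish in the limit, giving $\mathcal{H}(u_n)\to\mathcal{H}(u)$ in $W_0^{1,\mathcal{T}}(\Omega)^{*}$.

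I expect the only genuinely delicate point to be the boundedness estimate, where the growth of $\mathcal{M}$ must be balanced against the variable-exponent norm/modular conversions and the weighted seminorms simultaneously; the continuity, by contrast, is essentially inherited from Proposition~\ref{Prop:2.7}, since the $C^1$-regularity of $\varrho_{\mathcal{T}}$ already encodes the norm-continuity of $\varrho^{\prime}_{\mathcal{T}}$. Should one prefer a self-contained argument that does not invoke the $C^1$ statement, the hard step would instead be to prove directly that $\varrho^{\prime}_{\mathcal{T}}$ is norm-continuous, which reduces to the continuity of the three Nemytskii operators $\nabla u\mapsto |\nabla u|^{t(x)-2}\nabla u$ for $t\in\{p,q,r\}$ between the relevant (weighted) Lebesgue spaces, established via a.e.-convergent subsequences and a Vitali/dominated-convergence argument.
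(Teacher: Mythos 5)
Your proof is correct, but the continuity half takes a genuinely different route from the paper. You split
\begin{equation*}
\mathcal{H}(u_n)-\mathcal{H}(u)=\mathcal{M}(\varrho_{\mathcal{T}}(u_n))\bigl(\varrho^{\prime}_{\mathcal{T}}(u_n)-\varrho^{\prime}_{\mathcal{T}}(u)\bigr)+\bigl(\mathcal{M}(\varrho_{\mathcal{T}}(u_n))-\mathcal{M}(\varrho_{\mathcal{T}}(u))\bigr)\varrho^{\prime}_{\mathcal{T}}(u),
\end{equation*}
and then import the norm-continuity of $\varrho^{\prime}_{\mathcal{T}}$ wholesale from the $C^{1}$-regularity statement in Proposition~\ref{Prop:2.7}(i), so the whole argument collapses to ``bounded scalar times vanishing operator norm, plus vanishing scalar times fixed operator norm.'' The paper instead proves the convergence from scratch: it applies Proposition~\ref{Prop:2.8a} pointwise to peel the Kirchhoff coefficients $\mathcal{M}_{u_n},\mathcal{M}_u$ off the integrands, then runs H\"older's inequality and a Vitali-convergence argument on the three families $\{|\nabla u_n|^{t(x)-2}\nabla u_n\}$, $t\in\{p,q,r\}$, to get $\||\nabla u_n|^{t(x)-2}\nabla u_n-|\nabla u|^{t(x)-2}\nabla u\|_{t^{\prime}(x)}\to 0$ in the appropriate (weighted) dual Lebesgue norms. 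Your route is shorter and cleaner, but it makes Proposition~\ref{Prop:2.8a} unnecessary for this lemma and rests entirely on the cited $C^{1}$ result from \cite{dai2024regularity}; the paper's route is self-contained in the multi-phase setting and is, in effect, exactly the Nemytskii-plus-Vitali argument you sketch in your final paragraph as the fallback. For boundedness the two arguments are essentially the same (growth bound on $\mathcal{M}$ via $(M)$ and Remark~\ref{Rem:3.4ab}, then modular/norm conversions), the only cosmetic difference being that the paper normalizes and uses Young's inequality where you use H\"older together with Proposition~\ref{Prop:2.2bb}; both yield a polynomial bound in $\|u\|_{1,\mathcal{T},0}$.
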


\begin{proof}
Recall that $\langle \mathcal{H}(u),\varphi \rangle=\mathcal{M}(\varrho_{\mathcal{T}}(u))\langle\varrho^{\prime}_{\mathcal{T}}(u),\varphi\rangle$. By $(M)$, $\mathcal{M}$ is continuous. Additionally, from \cite[Propositions 4.4-4.5]{dai2024regularity},
$\varrho_{\mathcal{T}} \in C^{1}(W_0^{1,\mathcal{T}}(\Omega),\mathbb{R})$ with the Gateaux derivative of $\varrho^{\prime}_{\mathcal{T}}$. Therefore, as a composition function, $\mathcal{M}(\varrho_{\mathcal{T}}(\cdot))$ is continuous. Now, we will show that $\mathcal{H}$ is continuous. To this end, for a sequence $(u_n) \subset W_0^{1,\mathcal{T}}(\Omega)$ assume that $u_n \to u \in W_0^{1,\mathcal{T}}(\Omega)$.  Then using Proposition \ref{Prop:2.8a}, we have
\begin{align}\label{e4.12a}
&|\langle \mathcal{H}(u_n)-\mathcal{H}(u),v \rangle| \nonumber \\
& \leq \int_{\Omega}\bigg|\mathcal{M}_{u_n}|\nabla u_n|^{p(x)-2}\nabla u_n-\mathcal{M}_{u}|\nabla u|^{p(x)-2}\nabla u\bigg| |\nabla v| dx \nonumber \\
& + \int_{\Omega}\mu_1(x)\bigg|\mathcal{M}_{u_n}|\nabla u_n|^{q(x)-2}\nabla u_n-\mathcal{M}_{u}|\nabla u|^{q(x)-2}\nabla u\bigg| |\nabla v| dx \nonumber \\
& + \int_{\Omega}\mu_2(x)\bigg|\mathcal{M}_{u_n}|\nabla u_n|^{r(x)-2}\nabla u_n-\mathcal{M}_{u}|\nabla u|^{r(x)-2}\nabla u\bigg| |\nabla v| dx \nonumber \\
& \leq \int_{\Omega}\left\{||\nabla u_n|^{p(x)-2}\nabla u_n-|\nabla u|^{p(x)-2}\nabla u|\times (K^{u}_{u_n}+\mathcal{M}_{u_n})+K^{u}_{u_n}\right\} |\nabla v| dx \nonumber \\
& + \int_{\Omega}\mu_1(x)\left\{||\nabla u_n|^{q(x)-2}\nabla u_n-|\nabla u|^{q(x)-2}\nabla u|\times (K^{u}_{u_n}+\mathcal{M}_{u_n})+K^{u}_{u_n}\right\} |\nabla v| dx \nonumber \\
& + \int_{\Omega}\mu_2(x)\left\{||\nabla u_n|^{r(x)-2}\nabla u_n-|\nabla u|^{r(x)-2}\nabla u|\times (K^{u}_{u_n}+\mathcal{M}_{u_n})+K^{u}_{u_n}\right\} |\nabla v| dx \nonumber \\
& \leq (K^{u}_{u_n}+\mathcal{M}_{u_n})\int_{\Omega}||\nabla u_n|^{p(x)-2}\nabla u_n-|\nabla u|^{p(x)-2}\nabla u| |\nabla v| dx + K^{u}_{u_n}\int_{\Omega} |\nabla v| dx\nonumber \\
& + (K^{u}_{u_n}+\mathcal{M}_{u_n})\int_{\Omega}\mu_1(x)||\nabla u_n|^{q(x)-2}\nabla u_n-|\nabla u|^{q(x)-2}\nabla u||\nabla v| dx +K^{u}_{u_n}\int_{\Omega} |\nabla v| dx\nonumber \\
& + (K^{u}_{u_n}+\mathcal{M}_{u_n})\int_{\Omega}\mu_2(x)||\nabla u_n|^{r(x)-2}\nabla u_n-|\nabla u|^{r(x)-2}\nabla u||\nabla v| dx +K^{u}_{u_n}\int_{\Omega} |\nabla v| dx,
\end{align}
where we let $\mathcal{M}_{u_n}=\mathcal{M}(\varrho_{\mathcal{T}}(u_n))$, $\mathcal{M}_{u}=\mathcal{M}(\varrho_{\mathcal{T}}(u))$, and $K^{u}_{u_n}=|\mathcal{M}_{u_n}-\mathcal{M}_{u}|$.
Now, if we apply H\"{o}lder's inequality and consider the embeddings $L^{\mathcal{T}}(\Omega)\hookrightarrow L^{q(x)}_{\mu_1}(\Omega)$, $L^{\mathcal{T}}(\Omega)\hookrightarrow L^{r(x)}_{\mu_2}(\Omega)$, and Propositions \ref{Prop:2.2a} and \ref{Prop:2.7a}, it reads
\begin{align}\label{e4.13a}
&|\langle \mathcal{H}(u_n)-\mathcal{H}(u),v \rangle| \nonumber \\
& \leq (K^{u}_{u_n}+\mathcal{M}_{u_n}) \bigg| ||\nabla u_n|^{p(x)-2}\nabla u_n-|\nabla u|^{p(x)-2}\nabla u| \bigg|_{\frac{p(x)}{p(x)-1}}|\nabla v|_{p(x)}+K^{u}_{u_n}|\Omega||\nabla v|_{p(x)}\nonumber \\
&+(K^{u}_{u_n}+\mathcal{M}_{u_n})\bigg|\mu_1(x)^{\frac{q(x)-1}{q(x)}} ||\nabla u_n|^{q(x)-2}\nabla u_n-|\nabla u|^{q(x)-2}\nabla u| \bigg|_{\frac{q(x)}{q(x)-1}}|\mu_1(x)^{\frac{1}{q(x)}}|\nabla v||_{q(x)} \nonumber \\
& +K^{u}_{u_n}|\Omega||\nabla v|_{p(x)}\nonumber \\
&+(K^{u}_{u_n}+\mathcal{M}_{u_n})\bigg|\mu_2(x)^{\frac{r(x)-1}{r(x)}} ||\nabla u_n|^{r(x)-2}\nabla u_n-|\nabla u|^{r(x)-2}\nabla u| \bigg|_{\frac{r(x)}{r(x)-1}}|\mu_2(x)^{\frac{1}{r(x)}}|\nabla v||_{r(x)} \nonumber \\
& +K^{u}_{u_n}|\Omega||\nabla v|_{p(x)},
\end{align}
and therefore
\begin{align}\label{e4.14a}
\|\mathcal{H}(u_n)-\mathcal{H}(u)\|_{W_0^{1,\mathcal{T}}(\Omega)^*}=\sup_{v \in W_0^{1,\mathcal{T}}(\Omega),\|v\|_{1,\mathcal{T},0}\leq 1}|\langle \mathcal{H}(u_n)-\mathcal{H}(u),v \rangle| \to 0.
\end{align}
Hence, $\mathcal{H}$ is continuous on $W_0^{1,\mathcal{T}}(\Omega)$.\\
Note that the result (\ref{e4.14a}) follows from the following reasoning:\\
Since $u_n \to u$ in $W_0^{1,\mathcal{T}}(\Omega)$, the embeddings
\[ L^{\mathcal{T}}(\Omega)\hookrightarrow L^{q(x)}_{\mu_1}(\Omega), L^{\mathcal{T}}(\Omega)\hookrightarrow L^{r(x)}_{\mu_2}(\Omega), W_0^{1,\mathcal{T}}(\Omega) \hookrightarrow L^{\mathcal{T}}(\Omega),  \text{ and }  W_0^{1,\mathcal{T}}(\Omega) \hookrightarrow L^{p(x)}(\Omega) \]
ensure that
\begin{equation}\label{e3.11hk}
\lim_{n \to \infty}\int_{\Omega}|\nabla u_n|^{p(x)}dx=\int_{\Omega}|\nabla u|^{p(x)}dx,
\end{equation}
\begin{equation}\label{e3.11hm}
\lim_{n \to \infty}\int_{\Omega}\mu_1(x)|\nabla u_n|^{q(x)}dx=\int_{\Omega}\mu_1(x)|\nabla u|^{q(x)}dx,
\end{equation}
and
\begin{equation}\label{e3.11hma}
\lim_{n \to \infty}\int_{\Omega}\mu_2(x)|\nabla u_n|^{r(x)}dx=\int_{\Omega}\mu_2(x)|\nabla u|^{r(x)}dx.
\end{equation}
By Vitali’s Theorem (see \cite[Theorem 4.5.4]{bogachev2007measure} or \cite[Theorem 8]{royden2010real}), equations (\ref{e3.11hk})-(\ref{e3.11hma}) imply that $|\nabla u_n| \to |\nabla u|$, $\mu_1(x)^{\frac{1}{q(x)}}|\nabla u_n| \to \mu_1(x)^{\frac{1}{q(x)}}|\nabla u|$, and $\mu_2(x)^{\frac{1}{r(x)}}|\nabla u_n| \to \mu_2(x)^{\frac{1}{r(x)}}|\nabla u|$ in measure in $\Omega$. Moreover, the sequences $\{|\nabla u_n|^{p(x)}\}$, $\{\mu_1(x)|\nabla u_n|^{q(x)}\}$, and $\{\mu_2(x)|\nabla u_n|^{r(x)}\}$ are uniformly integrable over $\Omega$.
Now, consider the inequalities
\begin{equation}\label{e3.11hn}
||\nabla u_n|^{p(x)-2}\nabla u_n-|\nabla u|^{p(x)-2}\nabla u|^{\frac{p(x)}{p(x)-1}} \leq 2^{\frac{p^+}{p^--1}-1}(|\nabla u_n|^{p(x)}+|\nabla u|^{p(x)}),
\end{equation}
and
\begin{equation}\label{e3.11hnm}
\mu_1(x)||\nabla u_n|^{q(x)-2}\nabla u_n-|\nabla u|^{q(x)-2}\nabla u|^{\frac{q(x)}{q(x)-1}} \leq 2^{\frac{q^+}{q^--1}-1}\mu_1(x)(|\nabla u_n|^{q(x)}+|\nabla u|^{q(x)}),
\end{equation}
and
\begin{equation}\label{e3.11hnmn}
\mu_2(x)||\nabla u_n|^{r(x)-2}\nabla u_n-|\nabla u|^{r(x)-2}\nabla u|^{\frac{r(x)}{r(x)-1}} \leq 2^{\frac{r^+}{r^--1}-1}\mu_2(x)(|\nabla u_n|^{r(x)}+|\nabla u|^{r(x)}).
\end{equation}
As a consequence, the families
\begin{equation}\label{e3.12hnmn}
\bigg\{||\nabla u_n|^{p(x)-2}\nabla u_n-|\nabla u|^{p(x)-2}\nabla u|^{\frac{p(x)}{p(x)-1}}\bigg\},
\end{equation}
\begin{equation}\label{e3.13hnmn}
\bigg\{\mu_1(x)||\nabla u_n|^{q(x)-2}\nabla u_n-|\nabla u|^{q(x)-2}\nabla u|^{\frac{q(x)}{q(x)-1}}\bigg\},
\end{equation}
and
\begin{equation}\label{e3.14hnmn}
\bigg\{\mu_2(x)||\nabla u_n|^{r(x)-2}\nabla u_n-|\nabla u|^{r(x)-2}\nabla u|^{\frac{r(x)}{r(x)-1}}\bigg\}
\end{equation}
are uniformly integrable over $\Omega$. Applying Vitali’s Theorem again, we deduce that $|\nabla u|^{p(x)-2}\nabla u$, $\mu_1(x)|\nabla u|^{q(x)-2}\nabla u$ and $\mu_2(x)|\nabla u|^{r(x)-2}\nabla u$ are integrable, and
\begin{equation}\label{e3.11hnk}
 \bigg| ||\nabla u_n|^{p(x)-2}\nabla u_n-|\nabla u|^{p(x)-2}\nabla u| \bigg|_{\frac{p(x)}{p(x)-1}} \to 0,
\end{equation}
\begin{equation}\label{e3.11hnsn}
\bigg|\mu_1(x)^{\frac{q(x)-1}{q(x)}} ||\nabla u_n|^{q(x)-2}\nabla u_n-|\nabla u|^{q(x)-2}\nabla u| \bigg|_{\frac{q(x)}{q(x)-1}} \to 0,
\end{equation}
and
\begin{equation}\label{e3.11hns}
\bigg|\mu_2(x)^{\frac{r(x)-1}{r(x)}} ||\nabla u_n|^{r(x)-2}\nabla u_n-|\nabla u|^{r(x)-2}\nabla u| \bigg|_{\frac{r(x)}{r(x)-1}} \to 0.
\end{equation}
Finally, by assumption $(M)$ and Proposition \ref{Prop:2.7}, we have \newline $K^{u}_{u_n}=|\mathcal{M}_{u_n}-\mathcal{M}_{u}| \to 0$, and $\mathcal{M}_{u_n} \to \mathcal{M}_{u} \in [m_0, \infty)$ as $n \to \infty$. Therefore, the result (\ref{e4.14a}) follows. \\
Now, we verify that $\mathcal{H}$ is bounded. We argue similarly to \cite[Propositions 4.5]{dai2024regularity}.\\
Letting $u, v \in W_0^{1,\mathcal{T}}(\Omega)\setminus \{0\}$, using $(M)$, Remark \ref{Rem:3.4ab} and Young’s inequality, we obtain
\begin{align}\label{e4.12a}
&\min\bigg\{\frac{1}{\|\nabla v\|^{p^--1}_{\mathcal{T}}},\frac{1}{\|\nabla v\|^{r^+-1}_{\mathcal{T}}}\bigg\}\bigg\langle \mathcal{H}(u), \frac{v}{\|\nabla v\|_{\mathcal{T}}}\bigg\rangle \nonumber\\
&\leq \kappa \rho^{\gamma-1}_{\mathcal{T}}(\nabla u)\int_{\Omega}\left(\bigg|\frac{\nabla u}{\|\nabla u \|_{\mathcal{T}}}\bigg|^{p(x)-1}\frac{|\nabla v|}{\|\nabla v\|_{\mathcal{T}}} \right. \nonumber\\
&\left. +\mu_1(x)^{\frac{q(x)-1}{q(x)}}\bigg|\frac{\nabla u}{\|\nabla u \|_{\mathcal{T}}}\bigg|^{q(x)-1}\mu_1(x)^{\frac{1}{q(x)}}\frac{|\nabla v|}{\|\nabla v\|_{\mathcal{T}}}\right. \nonumber\\
&\left. +\mu_2(x)^{\frac{r(x)-1}{r(x)}}\bigg|\frac{\nabla u}{\|\nabla u \|_{\mathcal{T}}}\bigg|^{r(x)-1}\mu_2(x)^{\frac{1}{r(x)}}\frac{|\nabla v|}{\|\nabla v\|_{\mathcal{T}}}\right) dx \nonumber\\
&\leq \kappa \rho^{\gamma-1}_{\mathcal{T}}(\nabla u)\int_{\Omega}\left(\frac{p^+-1}{p^-}\bigg|\frac{\nabla u}{\|\nabla u \|_{\mathcal{T}}}\bigg|^{p(x)}+\frac{1}{p^-}\frac{\nabla v}{\|\nabla v \|_{\mathcal{T}}}\bigg|^{p(x)} \right. \nonumber\\
&\left. +\frac{\mu_1(x)(q^+-1)}{q^-}\bigg|\frac{\nabla u}{\|\nabla u \|_{\mathcal{T}}}\bigg|^{q(x)}\frac{\mu_1(x)}{q^-}\bigg|\frac{\nabla v}{\|\nabla v \|_{\mathcal{T}}}\bigg|^{q(x)}\right. \nonumber\\
&\left. +\frac{\mu_2(x)(r^+-1)}{r^-}\bigg|\frac{\nabla u}{\|\nabla u \|_{\mathcal{T}}}\bigg|^{r(x)}\frac{\mu_2(x)}{r^-}\bigg|\frac{\nabla v}{\|\nabla v \|_{\mathcal{T}}}\bigg|^{r(x)}\right) dx \nonumber\\
& \leq \kappa \rho^{\gamma-1}_{\mathcal{T}}(\nabla u)\left(\frac{r^+-1}{p^-}\rho_{\mathcal{T}}\left(\frac{\nabla u}{\|\nabla u \|_{\mathcal{T}}}\right)+\frac{1}{p^-}\rho_{\mathcal{T}}\left(\frac{\nabla v}{\|\nabla v \|_{\mathcal{T}}}\right) \right)\nonumber\\
& \leq \frac{\kappa r^+}{p^-} \|\nabla u\|^{(\gamma-1)\tau}_{\mathcal{T}},
\end{align}
where $\tau =\max\{r^{+}, p^{-}\}$.  Therefore,
\begin{align}\label{e4.14ba}
\|\mathcal{H}(u)\|_{W_0^{1,\mathcal{T}}(\Omega)^*}&=\sup_{v \in W_0^{1,\mathcal{T}}(\Omega)\setminus\{0\}} \frac{\langle\mathcal{H}(u),v \rangle}{\|\nabla v \|_{\mathcal{T}}} \leq \frac{\kappa r^+}{p^-} \|\nabla u\|^{(\gamma-1)\tau}_{\mathcal{T}}\max\{\|\nabla u\|^{p^--1}_{\mathcal{T}},\|\nabla u\|^{r^+-1}_{\mathcal{T}}\},
\end{align}
which concludes that $\mathcal{H}$ is bounded.
\end{proof}

\begin{lemma}\label{Lem:4.5a}
$\mathcal{F}$ is continuous and bounded.
\end{lemma}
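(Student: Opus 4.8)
The plan is to regard $\mathcal{F}$ as the composition of the Nemytskii (superposition) operator $N_f : u \mapsto f(\cdot,u,\nabla u)$ with the embedding dual to $W_0^{1,\mathcal{T}}(\Omega)\hookrightarrow L^{s(\cdot)}(\Omega)$. The first step is to check that $(f_2)$ forces $f(\cdot,u,\nabla u)\in L^{s'(\cdot)}(\Omega)$ for every $u\in W_0^{1,\mathcal{T}}(\Omega)$. Raising the growth bound to the power $s'(x)$ and using $(a+b+c)^{s'(x)}\le C(a^{s'(x)}+b^{s'(x)}+c^{s'(x)})$ with $C$ bounded (since $s^->1$ makes $s'$ bounded on $\overline{\Omega}$), the three terms are controlled by $\int_\Omega g^{s'(x)}\,dx$, by $\int_\Omega |u|^{s(x)}\,dx$ (using $(s(x)-1)s'(x)=s(x)$), and by $\int_\Omega |\nabla u|^{p(x)}\,dx$ (using $\tfrac{p(x)(s(x)-1)}{s(x)}\,s'(x)=p(x)$), all finite because $g\in L^{s'(\cdot)}(\Omega)$, $u\in L^{s(\cdot)}(\Omega)$ and $|\nabla u|\in L^{p(\cdot)}(\Omega)$ by the embeddings of Proposition \ref{Prop:2.7a}.

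For boundedness I would estimate $|\langle \mathcal{F}(u),\varphi\rangle|$ by H\"older's inequality with the conjugate pair $(s'(x),s(x))$, obtaining a constant times $|f(\cdot,u,\nabla u)|_{s'(\cdot)}\,|\varphi|_{s(\cdot)}$. The modular identities above, combined with Propositions \ref{Prop:2.2} and \ref{Prop:2.2bb} to pass between modular and norm, bound $|f(\cdot,u,\nabla u)|_{s'(\cdot)}$ by a power of $\|u\|_{1,\mathcal{T},0}$, while the continuous embedding $W_0^{1,\mathcal{T}}(\Omega)\hookrightarrow L^{s(\cdot)}(\Omega)$ (valid since $s(x)<p^*(x)$) gives $|\varphi|_{s(\cdot)}\le c_{\mathcal{T}}\|\varphi\|_{1,\mathcal{T},0}$. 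Taking the supremum over $\|\varphi\|_{1,\mathcal{T},0}\le 1$ yields a bound on $\|\mathcal{F}(u)\|_{W_0^{1,\mathcal{T}}(\Omega)^*}$ depending only on $\|u\|_{1,\mathcal{T},0}$, so $\mathcal{F}$ maps bounded sets to bounded sets.

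For continuity, suppose $u_n\to u$ in $W_0^{1,\mathcal{T}}(\Omega)$. By Proposition \ref{Prop:2.7a} this gives $u_n\to u$ in $L^{s(\cdot)}(\Omega)$ (compact embedding) and $\nabla u_n\to\nabla u$ in $L^{p(\cdot)}(\Omega)$ (continuous embedding $W_0^{1,\mathcal{T}}(\Omega)\hookrightarrow W_0^{1,p(\cdot)}(\Omega)$ with $h=p$). I would then invoke the subsequence principle: it suffices to show that every subsequence has a further subsequence along which $\mathcal{F}(u_n)\to\mathcal{F}(u)$. Along such a subsequence, the Riesz--Fischer-type converse to dominated convergence produces a.e.\ convergence $u_n\to u$, $\nabla u_n\to\nabla u$, together with dominating functions $w_1\in L^{s(\cdot)}(\Omega)$, $w_2\in L^{p(\cdot)}(\Omega)$ satisfying $|u_n|\le w_1$, $|\nabla u_n|\le w_2$ a.e. The Carath\'eodory hypothesis $(f_1)$ then gives $f(x,u_n,\nabla u_n)\to f(x,u,\nabla u)$ a.e., while $(f_2)$ supplies the fixed integrable majorant $C\big(g^{s'(x)}+a_1^{s'(x)}w_1^{s(x)}+a_2^{s'(x)}w_2^{p(x)}\big)$ for $|f(x,u_n,\nabla u_n)-f(x,u,\nabla u)|^{s'(x)}$. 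Lebesgue's dominated convergence theorem yields $\rho$-convergence in $L^{s'(\cdot)}(\Omega)$, hence $|f(\cdot,u_n,\nabla u_n)-f(\cdot,u,\nabla u)|_{s'(\cdot)}\to 0$ by Proposition \ref{Prop:2.2}$(iii)$, and one final application of H\"older and the embedding $W_0^{1,\mathcal{T}}(\Omega)\hookrightarrow L^{s(\cdot)}(\Omega)$ turns this into $\|\mathcal{F}(u_n)-\mathcal{F}(u)\|_{W_0^{1,\mathcal{T}}(\Omega)^*}\to 0$.

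The \emph{main obstacle} is the continuity part: justifying the simultaneous a.e.\ convergence and uniform domination of both $u_n$ and $\nabla u_n$ in the two distinct variable-exponent spaces, and confirming that the majorant built from $(f_2)$ is genuinely fixed and integrable — this is exactly where the boundedness of $s'$ and the exponent identities $(s(x)-1)s'(x)=s(x)$ and $\tfrac{p(x)(s(x)-1)}{s(x)}s'(x)=p(x)$ are indispensable. The boundedness part is comparatively routine once $f(\cdot,u,\nabla u)\in L^{s'(\cdot)}(\Omega)$ has been secured.
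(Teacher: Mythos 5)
Your proposal is correct and follows essentially the same route as the paper: factor $\mathcal{F}$ through the Nemytskii operator into $L^{s'(\cdot)}(\Omega)$, verify boundedness via the exponent identities from $(f_2)$, and prove continuity by a.e.\ convergence plus domination and the dominated convergence theorem, composed with the embedding $i^*:L^{s'(\cdot)}(\Omega)\to W_0^{1,\mathcal{T}}(\Omega)^*$. If anything, you are slightly more careful than the paper in two spots — you place the dominating function for the gradients in $L^{p(\cdot)}(\Omega)$ (the space where $\nabla u_n\to\nabla u$ actually holds) rather than $L^{s(\cdot)}(\Omega)$, and you invoke the subsequence principle to upgrade subsequential convergence to convergence of the full sequence.
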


\begin{proof}
Define the operator $\mathcal{B}_f:W_0^{1,\mathcal{T}}(\Omega)\rightarrow L^{s^{\prime}(x)}(\Omega)$ by
\begin{equation} \label{e3.5}
\mathcal{B}_f(u)=f(x,u,\nabla u).
\end{equation}
First we show that $\mathcal{B}_f$ is bounded in $L^{s^{\prime}(x)}(\Omega)$. Then, for any $u$ satisfying $\|u\|_{1,\mathcal{T},0}\leq1$, employing assumption $(f_2)$ along with the embeddings $L^{\mathcal{T}}(\Omega)\hookrightarrow L^{s(x)}(\Omega)$ and $W_0^{1,\mathcal{T}}(\Omega)\hookrightarrow L^{\mathcal{T}}(\Omega)$, we obtain
\begin{align}\label{e3.6}
\int_{\Omega} |\mathcal{B}_f(u)|^{s^\prime(x)} dx &=\int_{\Omega} |f(x,u,\nabla u)|^{s^\prime(x)} dx\nonumber\\
&\leq \int_{\Omega}  |g(x)+a_{1}|u|^{s(x)-1} +a_{2}|\nabla u|^{p(x)\frac{s(x)-1}{s(x)}}|^{s^\prime(x)}dx  \nonumber\\
&\leq c\int_{\Omega} \left(|g(x)|^{s^\prime(x)}+|u|^{s(x)}+|\nabla u|^{p(x)}\right)dx \nonumber\\
&\leq c\int_{\Omega} \left(|g(x)|^{s^\prime(x)}+|u|^{s(x)}+(|\nabla u|^{p(x)}+\mu_1(x)|\nabla u|^{q(x)}+\mu_2(x)|\nabla u|^{r(x)})\right)dx \nonumber\\
&\leq c\|u\|_{1,\mathcal{T},0}.
\end{align}
Now, let $u_n \to u$ in $W_0^{1,\mathcal{T}}(\Omega)$. Thus, we have $\nabla u_n \to \nabla u$ in $L^{s(x)}(\Omega)^{N}$. This ensures the existence of a subsequence, still denoted by $(u_n)$, and functions $\omega_1(x) \in L^{s(x)}(\Omega)$ and $\omega_2(x) \in L^{s(x)}(\Omega)^{N}$ satisfying:
\begin{itemize}
    \item $u_n(x) \to u(x)$ and $\nabla u_n(x) \to \nabla u(x)$ almost everywhere in $\Omega$,
    \item $|u_n(x)|\leq \omega_1(x)$ and $|\nabla u_n(x)|\leq |\omega_2(x)|$ almost everywhere in $\Omega$ for all $n$.
\end{itemize}
By assumption $(f_1)$, the function $f$ is continuous in its second and third arguments, leading to
\begin{equation} \label{e3.7}
f(x,u_n(x),\nabla u_n(x))\to f(x,u(x),\nabla u(x)) \text{ a.e. in } \Omega \text{ as } n\to \infty.
\end{equation}
Moreover, utilizing the previous bounds and assumption $(f_2)$, we obtain
\begin{align} \label{e3.8}
|f(x,u_n(x),\nabla u_n(x))|& \leq g(x)+a_{1}|\omega_1(x)|^{s(x)-1}+a_{2}|\omega_2(x)|^{p(x)\frac{s(x)-1}{s(x)}}.
\end{align}
By Hölder's inequality and Proposition \ref{Prop:2.2bb}, we establish
\begin{equation}\label{e3.9}
\int_{\Omega}a_{1}|\omega_1|^{s(x)-1}dx \leq c ||\omega_1|^{s(x)-1}|_{\frac{s(x)}{s(x)-1}}|\mathbf{1}|_{s(x)} \leq c |\omega_1|^{s^+-1}_{s(x)},
\end{equation}
and
\begin{equation}\label{e3.10}
\int_{\Omega}a_{2}|\omega_2|^{p(x)\frac{s(x)-1}{s(x)}}dx \leq c||\omega_2|^{p(x)\frac{s(x)-1}{s(x)}}|_{\frac{s(x)}{s(x)-1}}|\mathbf{1}|_{s(x)} \leq c |\omega_2|^{p^+}_{p(x)}.
\end{equation}
Since the embeddings $L^{\mathcal{T}}(\Omega)\hookrightarrow L^{s(x)}(\Omega)$, $W_0^{1,\mathcal{T}}(\Omega)\hookrightarrow W_0^{1,p(x)}(\Omega)$, and $W_0^{1,\mathcal{T}}(\Omega)\hookrightarrow L^{\mathcal{T}}(\Omega)$ ensure that the right-hand side of (\ref{e3.8}) is integrable, we apply the Lebesgue dominated convergence theorem (see, e.g., \cite{royden2010real}) together with (\ref{e3.7}) to conclude that
\begin{equation}\label{e3.11}
f(x,u_n,\nabla u_n) \to f(x,u,\nabla u) \text{ in } L^{1}(\Omega).
\end{equation}
Thus, we obtain
\begin{align}\label{e3.12}
\lim_{n \to \infty}\int_{\Omega} |\mathcal{B}_f(u_n)-\mathcal{B}_f(u)|^{s^\prime(x)} dx=0,
\end{align}
which, by Proposition \ref{Prop:2.2}, implies that $\mathcal{B}_f$ is continuous in $L^{s^{\prime}(x)}(\Omega)$.
Finally, since the embedding operator $i^*:L^{s^{\prime}(x)}(\Omega) \to W_0^{1,\mathcal{T}}(\Omega)^*$ is continuous, defining $\mathcal{F}= i^*\circ \mathcal{B}_f$ gives $\mathcal{F}:W_0^{1,\mathcal{T}}(\Omega)\rightarrow W_0^{1,\mathcal{T}}(\Omega)^{*}$ as a continuous and bounded operator.
\end{proof}
\begin{lemma}\label{Lem:4.6a}
$\mathcal{A}$ is pseudomonotone, i.e. for a sequence $(u_{n})\subset W_0^{1,\mathcal{T}}(\Omega)$,
\begin{equation}\label{e4.7ab}
u_{n} \rightharpoonup u \in W_0^{1,\mathcal{T}}(\Omega)
\end{equation}
and
\begin{equation}\label{e4.8ab}
\limsup_{n\rightarrow\infty}\langle \mathcal{A}(u_{n}),u_{n}-u\rangle\leq 0
\end{equation}
imply
\begin{equation}\label{e4.9ab}
\liminf_{n\rightarrow\infty}\langle \mathcal{A}(u_{n}),u_{n}-v\rangle\geq \langle \mathcal{A}(u),u-v\rangle, \quad  \forall v \in W_0^{1,\mathcal{T}}(\Omega).
\end{equation}
\end{lemma}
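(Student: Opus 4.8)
The plan is to exploit the decomposition $\mathcal{A}=\mathcal{H}-\mathcal{F}$, to peel off the compact lower-order part $\mathcal{F}$ so that the $\limsup$ condition (\ref{e4.8ab}) survives on $\mathcal{H}$ alone, and then to reduce matters to the $(S_{+})$-property of $\varrho'_{\mathcal{T}}$ proved in Proposition~\ref{Prop:2.7}. The final conclusion (\ref{e4.9ab}) will then follow by plain continuity.

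First I would observe that (\ref{e4.7ab}) forces $(u_{n})$ to be bounded in $W_0^{1,\mathcal{T}}(\Omega)$. By the compact embedding $W_0^{1,\mathcal{T}}(\Omega)\hookrightarrow L^{s(\cdot)}(\Omega)$ of Proposition~\ref{Prop:2.7a}$(ii)$ (recall $s(x)<p^{*}(x)$ from $(f_2)$), the weak convergence upgrades to $u_{n}\to u$ strongly in $L^{s(\cdot)}(\Omega)$. The boundedness of $\mathcal{B}_f$ established in Lemma~\ref{Lem:4.5a}, applied to the bounded sequence $(u_{n})$, keeps $f(\cdot,u_{n},\nabla u_{n})$ bounded in $L^{s'(\cdot)}(\Omega)$. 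Hence, by the variable-exponent Hölder inequality,
\begin{equation*}
|\langle \mathcal{F}(u_{n}),u_{n}-u\rangle|\leq 2\,|f(\cdot,u_{n},\nabla u_{n})|_{s'(\cdot)}\,|u_{n}-u|_{s(\cdot)}\longrightarrow 0.
\end{equation*}
The gradient dependence in $(f_2)$ is harmless here, since $\mathcal{F}(u_{n})$ is tested only against $u_{n}-u$, which converges strongly in $L^{s(\cdot)}(\Omega)$, and not against a gradient.

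Next, writing $\mathcal{H}=\mathcal{A}+\mathcal{F}$ and combining (\ref{e4.8ab}) with the limit just obtained,
\begin{equation*}
\limsup_{n\to\infty}\langle \mathcal{H}(u_{n}),u_{n}-u\rangle\leq \limsup_{n\to\infty}\langle \mathcal{A}(u_{n}),u_{n}-u\rangle+\lim_{n\to\infty}\langle \mathcal{F}(u_{n}),u_{n}-u\rangle\leq 0.
\end{equation*}
Since $\langle \mathcal{H}(u_{n}),u_{n}-u\rangle=\mathcal{M}(\varrho_{\mathcal{T}}(u_{n}))\langle \varrho'_{\mathcal{T}}(u_{n}),u_{n}-u\rangle$ and, by $(M)$, the factor $\mathcal{M}(\varrho_{\mathcal{T}}(u_{n}))$ is bounded (the $\varrho_{\mathcal{T}}(u_{n})$ being bounded) and bounded below by $m_{0}>0$, I decouple the nonlocal factor by a subsequence argument: if along some subsequence $\langle \varrho'_{\mathcal{T}}(u_{n}),u_{n}-u\rangle\to\ell>0$ while $\mathcal{M}(\varrho_{\mathcal{T}}(u_{n}))\to\bar a\geq m_{0}$, then $\langle \mathcal{H}(u_{n}),u_{n}-u\rangle\to\bar a\,\ell>0$, contradicting the display above. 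Therefore
\begin{equation*}
\limsup_{n\to\infty}\langle \varrho'_{\mathcal{T}}(u_{n}),u_{n}-u\rangle\leq 0,
\end{equation*}
which is exactly hypothesis (\ref{e2.8ab}); together with (\ref{e4.7ab}), Proposition~\ref{Prop:2.7}$(ii)$ yields $u_{n}\to u$ strongly in $W_0^{1,\mathcal{T}}(\Omega)$.

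Once strong convergence is secured, the continuity of $\mathcal{H}$ and $\mathcal{F}$ (Lemmas~\ref{Lem:4.4a} and \ref{Lem:4.5a}) gives $\mathcal{A}(u_{n})\to\mathcal{A}(u)$ in $W_0^{1,\mathcal{T}}(\Omega)^{*}$, and then for every $v\in W_0^{1,\mathcal{T}}(\Omega)$,
\begin{equation*}
\langle \mathcal{A}(u_{n}),u_{n}-v\rangle=\langle \mathcal{A}(u_{n}),u_{n}-u\rangle+\langle \mathcal{A}(u_{n}),u-v\rangle\longrightarrow\langle \mathcal{A}(u),u-v\rangle,
\end{equation*}
because $\mathcal{A}(u_{n})$ is bounded while $u_{n}-u\to 0$ strongly, and $\mathcal{A}(u_{n})\to\mathcal{A}(u)$. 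In particular the $\liminf$ in (\ref{e4.9ab}) equals $\langle \mathcal{A}(u),u-v\rangle$, proving pseudomonotonicity. I expect the only genuinely delicate point to be the decoupling of the nonlocal Kirchhoff factor $\mathcal{M}(\varrho_{\mathcal{T}}(u_{n}))$ from $\langle \varrho'_{\mathcal{T}}(u_{n}),u_{n}-u\rangle$, so that Proposition~\ref{Prop:2.7}$(ii)$ applies; the strict positivity $m_{0}>0$ in $(M)$ is precisely what makes this reduction legitimate.
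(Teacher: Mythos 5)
Your proof is correct and follows essentially the same route as the paper's: kill the $\mathcal{F}$-term via the compact embedding into $L^{s(\cdot)}(\Omega)$ and the boundedness of $\mathcal{B}_f$, transfer the $\limsup$ condition from $\mathcal{A}$ to $\mathcal{H}$, invoke the $(S_+)$-property of $\varrho'_{\mathcal{T}}$ from Proposition~\ref{Prop:2.7} to obtain strong convergence, and conclude by continuity and boundedness of $\mathcal{A}$. If anything, your explicit decoupling of the Kirchhoff factor $\mathcal{M}(\varrho_{\mathcal{T}}(u_n))$ (using $m_0>0$ and its boundedness along the bounded sequence) before applying the $(S_+)$-property is more careful than the paper, which passes directly from the $\limsup$ on $\mathcal{H}$ to that of $\varrho'_{\mathcal{T}}$; you also verify (\ref{e4.9ab}) directly rather than through the equivalent bounded-operator formulation of pseudomonotonicity that the paper cites.
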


\begin{proof}
As a consequence of Lemmas \ref{Lem:4.4a} and \ref{Lem:4.5a}, $\mathcal{A}$ is continuous and bounded. Since $\mathcal{A}$ is bounded, we use an equivalent definition of pseudomonotonicity (see, e.g.,\cite{crespo2022new}) as follows:\\
\[ u_{n} \rightharpoonup u \in W_0^{1,\mathcal{T}}(\Omega) \text{  and  } \limsup_{n\rightarrow\infty}\langle \mathcal{A}(u_{n}),u_{n}-u\rangle\leq 0\]
\text{imply}
\[ \mathcal{A}(u_{n}) \rightharpoonup \mathcal{A}(u)  \text{  and  }  \langle \mathcal{A}(u_{n}),u_{n}\rangle \rightarrow \langle \mathcal{A}(u),u\rangle.\]
To this end, let $(u_{n})\subset W_0^{1,\mathcal{T}}(\Omega)$ with
\begin{equation}\label{e4.9abc}
u_{n} \rightharpoonup u \in W_0^{1,\mathcal{T}}(\Omega) \text{ and } \limsup_{n\rightarrow\infty}\langle \mathcal{A}(u_{n}),u_{n}-u\rangle\leq 0.
\end{equation}
By the weak convergence of $(u_{n})$ in $W_0^{1,\mathcal{T}}(\Omega)$, $(u_{n})$ is $\|\cdot\|_{1,\mathcal{T},0}$-bounded. Thus, considering the compact embedding $W_0^{1,\mathcal{T}}(\Omega)\hookrightarrow L^{s(x)}(\Omega)$, and the boundedness of $\mathcal{B}_f$, it reads
\begin{align}\label{e4.10ab}
\bigg| \int_{\Omega}f(x,u_n,\nabla u_n)(u_n-u)dx \bigg|&\leq c |\mathcal{B}_f(u_n)|_{\frac{s(x)-1}{s(x)}}|u_n-u|_{s(x)}\nonumber\\
& \leq c \sup_{n \in \mathbb{N}} |\mathcal{B}_f(u_n)|_{\frac{s(x)-1}{s(x)}}|u_n-u|_{s(x)} \to 0 \quad \text{as } n \to \infty,
\end{align}
which means
\begin{equation}\label{e4.11ab}
\lim_{n \to \infty}\int_{\Omega}f(x,u_n,\nabla u_n)(u_n-u)dx=0.
\end{equation}
Furthermore, by taking the limit in the weak formulation of (\ref{e4.1a}) while substituting $u$ with $u_n$ and $\varphi$ with $u_n - u$, and considering $(M)$, we arrive at
\begin{equation}\label{e4.12ab}
\limsup_{n\rightarrow\infty}\langle \mathcal{H}(u_{n}),u_{n}-u\rangle=\limsup_{n\rightarrow\infty}\langle \mathcal{A}(u_{n}),u_{n}-u\rangle\leq 0.
\end{equation}
This, by Proposition \ref{Prop:2.7}, means that $\mathcal{H}$ satisfies the $(S_{+})$-property. Moreover, by (\ref{e4.9abc}) and (\ref{e4.12ab}), it reads $u_{n} \to u$ in $W_0^{1,\mathcal{T}}(\Omega)$. Finally, considering that $\mathcal{A}$ is continuous and bounded, we obtain $\mathcal{A}(u_{n}) \to \mathcal{A}(u)$ in $W_0^{1,\mathcal{T}}(\Omega)^*$, from which we conclude that $\mathcal{A}$ is pseudomonotone.
\end{proof}

\begin{proof}[Proof of Theorem \ref{Thrm:4.2a}]
Since $\mathcal{A}$ is a pseudomonotone, bounded, and coercive operator, it is surjective. This result ensures the existence of a function $u \in W_0^{1,\mathcal{T}}(\Omega)$ such that  $\mathcal{A}u=\mathcal{H}u-\mathcal{F}u=0$. On the other hand, by the definition of $\mathcal{A}$ and the assumptions $(M)$, $(f_1)$, $u$ is a nontrivial weak solution of problem (\ref{e1.1}).
\end{proof}

\section*{Conflict of Interest}
The author declared that he has no conflict of interest.

\section*{Data Availability}
No data is used to conduct this research.

\section*{Funding}
This work was supported by Athabasca University Research Incentive Account [140111 RIA].

\section*{ORCID}
https://orcid.org/0000-0002-6001-627X

\bibliographystyle{tfnlm}
\bibliography{references}

\end{document}